\newtheorem{theorem}{Theorem}[section]
\newtheorem{lemma}{Lemma}[section]
\newtheorem{corollary}{Corollary}[section]
\numberwithin{equation}{section}
\def\II{\mathbb I}
\def\ZZ{{\mathbb Z}}
\def\RRd{{\mathbb R}^d}
\def\NN{{\mathbb N}}
\def\II{{\mathbb I}}
\def\CC{{\mathbb C}}
\def\CC{{\mathbb C}}
\def\NN{{\mathbb N}}
\def\RR{{\mathbb R}}
\def\Vv{{\mathbb P}}
\def\UU{{\mathbb U}}
\def\Vv{{\mathcal P}}
\def\RRd{{\mathbb R}^d}
\def\IIi{{\mathbb I}^\infty}
\def\CCi{{\mathbb C}^\infty}
\def\UUi{{\mathbb U}^{\infty}}
\def\ZZip{{\mathbb Z}^\infty_+}
\def\Dd{{\mathcal D}}
\def\Ii{{\mathcal I}}
\def\Ll{{\mathcal L}}
\def\Oo{{\mathcal O}}
\def\Pp{{\mathcal P}}
\def\Ss{{\mathcal S}}
\def\Vv{{\mathcal V}}
\def\II{{\mathbb I}}
\def\CC{{\mathbb C}}
\def\ZZ{{\mathbb Z}}
\def\NN{{\mathbb N}}
\def\RR{{\mathbb R}}
\def\FF{{\mathbb F}}
\def\RRd{{\mathbb R}^d}
\def\dv{\operatorname{div}}
\def\LiD{L_\infty(D)}
\def\AAid{{\mathbb A}^\infty_\delta}
\def\AAidB{{\mathbb A}^\infty_{\delta,B}}
\def\Wi{W^1_\infty(D)}
\title{\sffamily Linear collective collocation and Galerkin approximations for \\
 parametric and stochastic elliptic PDEs} 
\author{ 
Dinh D\~ung \\[4mm]
Information Technology Institute, Vietnam National University, Hanoi \\
144 Xuan Thuy, Cau Giay, Hanoi, Vietnam\\
{\ttfamily dinhzung@gmail.com}\\[7mm] 
}
\date{\ttfamily  April 11, 2017 --  Version 4.2}
\begin{document}
\maketitle

\begin{abstract}
Consider the parametric elliptic problem 
\begin{equation} \nonumber
- \operatorname{div} \big(a(y)(x)\nabla u(y)(x)\big)
\ = \
f(x) \quad x \in D, \ y \in \IIi,
\quad u|_{\partial D} \ = \ 0, 
\end{equation}
where  $D \subset \RR^m$ is a bounded Lipschitz domain, $\IIi:=[-1,1]^\infty$, $f \in L_2(D)$, and the diffusions $a$ satisfy the uniform ellipticity assumption and are affinely dependent with respect to $y$. 
The parametric variable $y$ may be deterministic or random.
In the present paper, a central question to be studied is as follows.
Assume that we have an approximation property that there is a sequence of finite element approximations with a certain error convergence rate in energy norm of the space
$V:=H^1_0(D)$ for 
the nonparametric problem
$- \dv \big(a(y_0)(x)\nabla u(y_0)(x)\big)  = f(x)$
at every point $y_0 \in \IIi$.
Then under what assumptions does this sequence 
induce a sequence of finite element approximations with the same error convergence rate for 
the parametric elliptic problem  in the norm of the Bochner spaces $L_\infty(\IIi,V)$ or $L_2(\IIi,V)$? 
We solved this question by 
linear collective Taylor,  collocation and Galerkin methods, based on Taylor expansions, Lagrange polynomial interpolations and Legendre polynomial expansions, respectively, on the parametric domain $\IIi$. Under very light conditions, we show that all these approximation methods give the same error convergence rate  
as that by the sequence of finite element approximations for the nonparametric elliptic problem.  Hence the curse of dimensionality is broken by linear methods. 

\medskip
\noindent
{\bf Keywords and Phrases}: high-dimensional problems, parametric and stochastic elliptic PDEs, linear collective Taylor and collocation approximations, affine dependence of the diffusion coefficients. 

\medskip
\noindent
{\bf Mathematics Subject Classifications (2010)}: 65N35, 65N30, 65N15, 65L10, 65D05, 65C30. 
  
\end{abstract}

\section{Introduction} 

In the recent decades, various approaches and methods have been proposed for the numerical solving of parametric partial differential equations 
of the form
\begin{equation} \label{General-SPDE}
\Dd(u,y)
\ = \
0, 
\end{equation}
where $u \mapsto \Dd(u,y)$ is a partial differential operator that depends on $d$ parameters represented as the vector
$y = (y_1,...,y_d) \in \Omega \subset \RRd$. If we assume that the problem \eqref{General-SPDE} is well-posed in a Banach space $X$, then the solution map $y \mapsto u(y)$ is defined from the parametric domain $\Omega$ to the solution space $X$. We refer the reader to \cite{CD15, GWZ14, SG11} for surveys and bibliography on different aspects in study of approximation and numerical methods for the problem \eqref{General-SPDE}.

Depending on the nature of the object modeled by the equation \eqref{General-SPDE},  the parameter $y$ may be either deterministic or random variable. The main challenge in numerical computation is to approximate the entire solution map $y \mapsto u(y)$ up to a prescribed accuracy with acceptable cost. This problem becomes actually difficult when $d$ may be very large. Here we suffer the so-called curse of dimensionality coined by Bellman: the computational cost grows exponentially in the dimension $d$ of the parametric space. Moreover, in some models the number of parameters may be even countably infinite.
In the present paper, a central question to be considered is: Under what assumptions does a sequence of finite element approximations  with a certain error convergence rate for 
the nonparametric problem
$\Dd(u,y_0) = 0$
at every point $y_0 \in \Omega$
induce a sequence of finite element approximations with the same error convergence rate for 
the parametric problem \eqref{General-SPDE}? 
We will solve it for a model parametric elliptic  equation by 
{\em linear collective methods}, and therefore, show that the curse of dimensionality is broken by them. However, we believe that our approach and methods can be extended to more general equations of the form \eqref{General-SPDE}. 

Let $D \subset \RR^m$ be a bounded domain with a Lipschitz boundary $\partial D$ and $\IIi:= [-1,1]^\infty$. Consider the parametric elliptic problem 
\begin{equation} \label{SPDE}
- \dv (a(y)\nabla u(y))
\ = \
f \quad \text{in} \quad D,
\quad u|_{\partial D} \ = \ 0, \quad y \in \IIi,
\end{equation}
where the gradient operator $\nabla$ is taken with respect to $x$, the diffusions $a(y)(x):=a(x,y)$  are functions of $x=(x_1,...,x_m) \in D$ and of parameters $y=(y_1,y_2,...) \in \IIi$ on $D \times \IIi$, and the function $f(x)$ is functions of $x=(x_1,...,x_m) \in D$. Throughout the present paper we preliminarily assume that $f \in L_2(D)$ and  the diffusions $a$ satisfy the {\em uniform ellipticity assumption}
\begin{equation} \label{UEA}
\quad 0 \ < \  r \ < \ a(y)(x)=a(x,y) \ \le \ R \ < \ \infty, \quad x \in D, \ y \in \IIi,
\end{equation}
and are affinely dependent with respect to $y$, or more precisely, 
\begin{equation} \label{KL-exp}
a(y)(x)
\ = \
\overline{a}(x) \ + \ 
\sum_{j=1}^\infty  y_j\, \psi_j(x), \quad x \in D, \ y \in \IIi, \quad \overline{a},\psi_j \in \Wi,
\end{equation}
where $\Wi$ is the space of functions $v$ on $D$, equipped with the semi-norm and norm 
\[ 
|v|_{\Wi}
:= \
\max_{1 \le i \le m} \|\partial_{x_i} v\|_{\LiD}, \quad 
\|v\|_{\Wi}
:= \
\|v\|_{\LiD} + |v|_{\Wi}. 
\] 

Based on finite element approximations with respect to the spatial variable $x$ and polynomial approximations with respect to the parametric variable $y$, there have been proposed several numerical methods for solving \eqref{SPDE}. Many works have been devoted to the development of the parametric Garlerkin and collocation techniques for the numerical solving of \eqref{SPDE}. As shown in \cite{DG15}, these methods are promising since they can use the possible regularity of the solution $u(y)$ with respect to the parameters $y$ to achieve faster convergence than sampling methods like Monte Carlo. A parametric Garlerkin method is a projection technique over a set of orthogonal polynomials with respect to an appropriate probability measure 
\cite{ABS09, BTZ04,  BNTT12, CCS13, CDS10, CDS11, DG15, FST05, Gi13, HoS12, MK05, NS13}. A collocation method is an approximation by a sum of Lagrangian interpolants based on the data of particular solution instances $u(y^{(i)})$ for some chosen values $y^{(1)},...,y^{(k)}$
\cite{BNT07, BNTT12, CCS13, EMPT11, MNST11, NTW08a, NTW08b}. In the case of problems with affine parameter dependence such as \eqref{KL-exp}, adaptive methods based on Taylor expansions have been investigated in 
\cite{CDS11, HS13a}.

In \cite{CCDS13}--\cite{CDS11}, \cite{HoS12} based on the $\ell_p$-assumption 
$\big(\|\psi_j\|_{\Wi}\big)_{j \in \NN} \in \ell_p(\NN)$ for some $0 < p <1$ on the affine expansion \eqref{KL-exp},
 the authors proposed nonlinear $n$-term approximation methods in energy norm by establishing {\em a priori} the set of the $n$ most useful infinite dimensional polynomials in Taylor expansion, Legendre polynomials expansion and Lagrange interpolation. The obtained $n$-term approximands then are approximated by finite element methods. 
It is worth to emphasize that the $\ell_p$-assumption crucially influences the convergence rate of the approximation error due to involving Stechkin's lemma. 
The results of \cite{CDS10, CDS11} have been improved \cite{BCM15, BCDS17} and extended to a class of parametric semi-linear elliptic PDEs \cite{HS13a, CCS15} and to parametric nonlinear PDEs \cite{CCS15,CD15}.
The reader can find a survey and bibliography on this direction in \cite{CD15}.

In the recent papers \cite{DG15,DGVR17}, we have considered a particular case of the equation \eqref{SPDE} where $D=[0,1]^m$, with an {\it a priori} assumption that the solution possesses higher order mixed smoothnesses of Sobolev-Korobov type or of Sobolev-analytic type simultaneously on spatial variable $x$ and parametric variable $y$. Applying results on hyperbolic cross approximation in infinite dimension, we constructed linear collective Galerkin methods on both variables $x$ and $y$ for approximation of the solution which give the convergences rate in energy norm as the same as that of approximation by Galerkin methods for solving the corresponding nonparametric elliptic problem the domain $[0,1]^m$. Moreover, the infinite-variate parametric part of the problem completely disappeared from the cost of complexities and influences only the constants.

Let $V:= H^1_0(D)$ and  denote by $W$ the subspace of $V$ equipped with the semi-norm and norm
\begin{equation} \nonumber
|v|_W 
:= \
\|\Delta v\|_{L_2(D)}, \quad 
\|v\|_W 
:= \
\|v\|_V + |v|_W.
\end{equation}
Assume that we have the following approximation property on the spatial domain $D$: There are a nested sequence of subspaces 
$(V_n)_{n \in \NN}$ in $V$,  a sequence of linear bounded operators 
$(P_n)_{n \in \NN}$ from $V$ into $V_n$, and a number 
 $0< \alpha \le 1/m$  such that  $\dim V_n \le n$  and
\begin{equation} \label{Int[W-approx-property]}
\|v - P_n(v)\|_V
\ \le \ 
C_D\, n^{-\alpha} \, \|v\|_W, \quad \forall v \in W.
\end{equation}
In the present paper, we propose collective Taylor, collocation and  Galerkin approximations in the Bochner spaces 
$L_\infty(\IIi,V)$ and $L_2(\IIi,V)$ for solving \eqref{SPDE}, based on this approximation property and Taylor expansions, Lagrange polynomial interpolations and  Legendre polynomials expansions, respectively, on the parametric domain $\IIi$. 
All the methods are linear and  constructive. The Taylor and Galerkin approximations are based on hyperbolic crosses, while the collocation method on sparse grids.  Moreover, they are collective with regard to spatial variable $x$ and parametric variable $y$. This means that in constructing these methods, the $m$-variate spatial part and the infinite-variate parametric  part are not separately but collectively treated. 

We put a light restriction on the diffusions $a(y)$: the inclusion 
\begin{equation} \label{ell_p-assumption}
\big(\|\psi_j\|_{\Wi}\big)_{j \in \NN} \in \ell_{p(\alpha)}(\NN)
\end{equation}
 with $p(\alpha) = \frac{1}{1+\alpha}$
for the collective Taylor and collocation approximations, and with 
$p(\alpha)= ,\frac{2}{1+2\alpha}$ for the collective Galerkin approximation.
 Under these conditions on the diffusions $a(y)$, we show that our methods give {\em the same convergence rate} $n^{-\alpha}$ of the error of the approximation of the solution of the nonparametric elliptic problem 
using the approximation property  \eqref{Int[W-approx-property]} 
(see \eqref{PDE} and \eqref{ApproxPDE} in Subsection~\ref{Nonparametric}). All the conditions on the diffusions $a(y)$ in particular, the $\ell_{p(\alpha)}$-assumption do not affect the convergence rate of the approximation error, completely disappear from it and influence only the constant. 
 Finally, notice also that the construction of linear collective approximations in the present paper is completely different from the construction of finite element approximations in \cite{CCS13}, \cite{CDS10}, \cite{CDS11}, and from the construction of linear collective approximations in \cite{DG15, DGVR17}.

The outline of the present paper is the following. In Section \ref{general approximation}, as a preliminary we investigate a general collective approximation in the space $L_\infty(\IIi,V)$. Section \ref{Taylor approximation} is devoted to the construction and error estimation of collective Taylor methods for solving 
\eqref{SPDE}. Section \ref{Collocation methods} is devoted to the construction and error estimation of collective collocation methods for solving \eqref{SPDE}.
In Section \ref{Legendre p.a.}, we extend the construction and methods in Section 
\ref{Taylor approximation}  to the construction and error estimation of  collective Legendre and Galerkin methods for solving \eqref{SPDE}. Section \ref{Concluding remarks} is devoted to some concluding remarks.

\section{A general collective approximation}
\label{general approximation}

\subsection{Nonparametric elliptic problem}
\label{Nonparametric}

Let us preliminarily consider the nonparametric complex-valued situation when we have only one equation:  
\begin{equation} \label{PDE}
- \dv (a\nabla u)
\ = \
f \quad \text{in} \quad D,
\quad u|_{\partial D} \ = \ 0, 
\end{equation}
where $f,a$ are complex-valued functions on $D$,  $f \in L_2(D)$  and  $a$ satisfies the ellipticity assumption
\begin{equation} \nonumber
 0 \ < \  r \ < \ \Re[a(x)] \ \le \ \ |a(x)| \ \le \
R \ < \ \infty, \quad x \in D.
\end{equation}
By the well-known Lax-Milgram lemma, there exists a unique solution $u \in V$ in weak form which satisfies the variational equation
\begin{equation} \nonumber
\int_{D} a(x)\nabla u(x) \cdot \nabla v(x) \, \mbox{d}x
\ = \
\int_{D} f(x) \, v(x) \, \mbox{d}x, \quad \forall v \in V.
\end{equation}
 Moreover, this solution satisfies the inequality
\begin{equation}  \label{|u(y)|_V <} 
\|u\|_V 
\ \le \
\frac{\|f\|_{V^*}}{r},
\end{equation}
where $V^* = H^{-1}(D)$ denotes the dual of $V$. 
Observe that there holds the embedding $L_2(D) \hookrightarrow V^*$ and the  inequality
$\|f\|_{V^*}  \le \|f\|_{L_2(D)}$.

If we assume that $a \in \Wi$, then the solution $u$ of \eqref{PDE} is in $W$. Moreover, $u$ satisfies the estimates
\begin{equation} \nonumber
|u|_W 
\ \le \
\frac{1}{r}\left(1 + \frac{|a|_{\Wi}}{r}\right) \|f\|_{L_2(D)},
\end{equation}
and
\begin{equation} \label{PDE[u_W<]}
\|u\|_W 
\ \le \
\frac{1}{r}\left[1 + \left(1 + \frac{|a|_{\Wi}}{r}\right)\right] \|f\|_{L_2(D)}.
\end{equation}


Suppose that we have an approximation property in the following assumption.

\smallskip
\noindent
{\bf Assumption (i)}: There are a nested sequence of subspaces 
$(V_n)_{n \in \NN}$ in $V$,  a sequence of linear bounded operators 
$(P_n)_{n \in \NN}$ from $V$ into $V_n$, and a number 
 $0< \alpha \le 1/m$  such that  $\dim V_n \le n$  and
\begin{equation}  \label{convergence-rate}
\|v - P_n(v)\|_V
\ \le \ 
C_D\, n^{-\alpha} \, \|v\|_W, \quad \forall v \in W,
\end{equation}
where $C_D$ is a constant which may depend on the domain $D$.

For example, classical error estimates \cite{Cia78} yield that
the convergence rate in \eqref{convergence-rate} with $\alpha=1/m$ can be achieved by using Lagrange finite elements on quasi-uniform partitions. 
Throughout the remainder of the present paper, $\alpha$ is fixed and used only for denoting the convergence rate in Assumption~(i). 

Under Assumption~(i) by C\'ea's lemma we have
\begin{equation} \label{ApproxPDE}
\|u - u_n\|_V
\ \le \ 
\sqrt{\frac{R}{r}} \, \inf_{v \in V_n}\|u - v\|_V
\ \le \ 
\sqrt{\frac{R}{r}}\, \|u - P_n(u)\|_V
\ \le \ 
\sqrt{\frac{R}{r}}\, C_D \, n^{-\alpha},
\end{equation}
where $u_n$ is the Galerkin approximation which is the unique solution of the problem
\begin{equation} \nonumber
\int_{D} a(x)\nabla u_n(x) \cdot \nabla v(x) \, \mbox{d}x
\ = \
\int_{D} f(x) \, v(x) \, \mbox{d}x, \quad \forall v \in V_n.
\end{equation}



\subsection{A collective approximation}
\label{A collective approximation}

We construct now a general collective linear method based on the approximation property  on the spatial domain $D$ in Assumption (i) and a unconditional expansion  on  the parametric domain $\IIi$.
To this end, for $k \in \ZZ_+$,  we define
\begin{equation} \nonumber
\delta_k (v)
:= \
P_{2^k} (v)  - P_{2^{k-1}} (v), \ k \in \NN, \quad \delta_0 (v) = P_0 (v).
\end{equation}
If Assumption (i) holds, then we can represent every $v \in W$  by the series
\begin{equation} \nonumber
v
\ = \
\sum_{k =0}^\infty \delta_k (v)
\end{equation}
converging in $V$ and satisfying the estimate
\begin{equation} \label{delta-approx-property}
\|\delta_k (v)\|_V
\ \le \
(1 + 2^\alpha) C_D\, 2^{-\alpha k} \, \|v\|_W, \quad k \in \ZZ_+.
\end{equation}

Denote by $\FF$ the subset in $\ZZip$ of all $s$ such that $\operatorname{supp}(s)$ is finite, where 
$\operatorname{supp}(s)$ is the support of $s$, that is the set of all $j \in \NN$ such that 
$s_j \not=0$.
We say that a sequence $(\Lambda_N)_{N \in \NN} \subset \FF$ of finite sets exhausts $\FF$ if any finite set 
$\Lambda \subset \FF$ is contained in all  $\Lambda_N$ for $N \ge N_0$ with $N_0$ sufficiently large. 
Similarly, we say that a sequence $(G_N)_{N \in \NN} \subset \ZZ_+ \times \FF$ of finite sets exhausts $\ZZ_+ \times \FF$ if any finite set $G \subset \ZZ_+ \times \FF$ is contained in all  $G_N$ for $N \ge N_0$ with $N_0$ sufficiently large.

For the normed space $X$ of functions on $D$, denote by $L_\infty(\IIi,X)$  the space of all mappings $v$ from 
$\IIi$ to $X$ for which the following norm is finite
\begin{equation} \nonumber
\|v\|_{L_\infty(\IIi,X)}
:= \
\sup_{y \in \IIi} \|v(y)\|_X.
\end{equation}
We also use the notation  
\begin{equation} \nonumber
|v|_{L_\infty(\IIi,X)}
:= \
\sup_{y \in \IIi} |v(y)|_X
\end{equation}
for a semi-norm $|v(y)|_X$ in $X$ if any.


\begin{lemma} \label{lemma[Uncond-convergence]}
Let Assumption $\operatorname{(i)}$ hold. 
Let $v \in L_\infty(\IIi,V)$ be represented as the series 
\begin{equation} \label{eq[Uncond-convergence](1)}
v(y)(x)
\ = \
\sum_{s \in \FF} g_s(x) \varphi_s(y)
\end{equation}
converging unconditionally in $L_\infty(\IIi,V)$
where
$g_s \in W$ and $(\|g_s\|_W )_{s \in \FF}$ belongs to $\ell_1(\FF)$, and $\varphi_s \in L_\infty(\IIi)$ with 
$\|\varphi_s\|_{L_\infty(\IIi)} = 1$. 
Then $v(y)$ can be represented as the series 
\begin{equation} \label{[g-series]}
v(y)(x)
\ = \
\sum_{(k,s) \in \ZZ_+ \times \FF} \delta_k (g_s)(x) \, \varphi_s(y), \quad y \in \IIi,
\end{equation}
converging unconditionally in $L_\infty(\IIi,V)$. 
\end{lemma}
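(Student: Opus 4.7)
The plan is to reduce everything to a single absolute-convergence estimate on the double series in $L_\infty(\IIi,V)$, from which unconditional convergence to $v$ will be essentially automatic.

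First, I would observe that since $g_s \in W$ for each $s \in \FF$, Assumption~(i) gives the representation $g_s = \sum_{k \in \ZZ_+} \delta_k(g_s)$ converging in $V$, together with the decay estimate \eqref{delta-approx-property}. Substituting this formally into \eqref{eq[Uncond-convergence](1)} yields the candidate double series \eqref{[g-series]}, so the task is just to justify the manipulation.

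The main step is to bound the norm of the double series termwise. Using $\|\varphi_s\|_{L_\infty(\IIi)} = 1$ together with \eqref{delta-approx-property}, I get
\[
\sum_{(k,s) \in \ZZ_+ \times \FF} \|\delta_k(g_s)\, \varphi_s\|_{L_\infty(\IIi,V)}
\ \le \
\sum_{(k,s)} \|\delta_k(g_s)\|_V
\ \le \
(1+2^\alpha)\, C_D \sum_{s \in \FF} \|g_s\|_W \sum_{k \in \ZZ_+} 2^{-\alpha k},
\]
which is finite because $\sum_k 2^{-\alpha k} = (1-2^{-\alpha})^{-1}$ and $(\|g_s\|_W)_{s \in \FF} \in \ell_1(\FF)$ by hypothesis. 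Thus the series \eqref{[g-series]} is absolutely convergent in the Banach space $L_\infty(\IIi,V)$, and hence converges unconditionally to some element $\tilde v \in L_\infty(\IIi,V)$.

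It remains to identify $\tilde v$ with $v$. For this I would fix an arbitrary $y \in \IIi$ and pass to the $V$-valued evaluation at $y$. Since \eqref{eq[Uncond-convergence](1)} converges unconditionally in $L_\infty(\IIi,V)$, it converges in $V$ at $y$ to $v(y)$; and since $g_s = \sum_k \delta_k(g_s)$ in $V$, grouping the double sum $\sum_{(k,s)} \delta_k(g_s)(\cdot)\,\varphi_s(y)$ by $s$ (which is permitted by absolute convergence established above) gives the same limit $v(y)$ in $V$. Therefore $\tilde v(y) = v(y)$ for every $y \in \IIi$, so $\tilde v = v$ as elements of $L_\infty(\IIi,V)$, completing the proof.

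The only potentially delicate point is the interchange of summations, but since the double series is absolutely convergent in $L_\infty(\IIi,V)$ and hence pointwise absolutely convergent in $V$, Fubini-type rearrangement for absolutely convergent series in a Banach space applies directly, and there is no real obstacle.
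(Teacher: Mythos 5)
Your proof is correct, and it reaches the conclusion by a route organized differently from the paper's, although both rest on the same two ingredients: the termwise bound \eqref{delta-approx-property} combined with the $\ell_1$-summability of $(\|g_s\|_W)$, and the fact that $\sum_{k=0}^{N}\delta_k(g_s)=P_{2^N}(g_s)\to g_s$ in $V$ by Assumption (i). The paper verifies the exhaustion definition of unconditional convergence directly in two stages: first along the special rectangular sets $G_N^*=\{(k,s):\,0\le k\le N,\ s\in\Lambda_N\}$, where the telescoping identity reduces the discrepancy with the partial sums of \eqref{eq[Uncond-convergence](1)} to $\sum_{s\in\Lambda_N}\|g_s-P_{2^N}(g_s)\|_V\le C_D\,2^{-\alpha N}\|(\|g_s\|_W)\|_{\ell_1(\FF)}$, and then for arbitrary exhausting sequences via an $\varepsilon/2$ tail argument. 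You instead establish at the outset that the double series is absolutely convergent in the Banach space $L_\infty(\IIi,V)$, with the explicit bound $(1+2^\alpha)C_D(1-2^{-\alpha})^{-1}\|(\|g_s\|_W)\|_{\ell_1(\FF)}$, invoke the standard fact that absolute convergence implies unconditional convergence, and then identify the limit by pointwise evaluation in $V$ and a Fubini regrouping by $s$. Your organization buys a cleaner logical structure, since the single absolute-convergence estimate does double duty for the existence of the unconditional limit and for the legitimacy of the regrouping; in particular it bypasses the tail inequality $\sum_{(k,s)\notin G_M^*}\|\delta_k(g_s)\|_V\le (2^\alpha+1)C_D\,2^{-\alpha M}\sum_{(k,s)\notin G_M^*}\|g_s\|_W$ as written in the paper, which taken literally sums $\|g_s\|_W$ over infinitely many $k$ for each $s\notin\Lambda_M$ and should instead be handled by splitting the complement of $G_M^*$ into the parts $k>M$ and $s\notin\Lambda_M$. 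What the paper's version buys in exchange is an explicit quantitative rate $2^{-\alpha N}$ of convergence along the sets $G_N^*$, which foreshadows the error analysis of Theorem 2.1.
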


\begin{proof}
Let us first prove the convergence of the series \eqref{[g-series]} for a sequence of special form 
$(G^*_N)_{N \in \NN}$ with
\begin{equation} \nonumber
G^*_N = ((k,s) \in \ZZ_+ \times \FF: \, 0 \le k \le N, \ s \in \Lambda_N),
\end{equation}
where $(\Lambda_N)_{N \in \NN}$ is any sequence of finite subsets in $\FF$ which exhausts 
$\FF$. 

We have for every $y \in \IIi$,
\begin{equation} \nonumber
\Big\|v(y) -  \sum_{(k,s) \in G^*_N}\delta_k (g_s) \varphi_s(y)\Big\|_V
\ \le \ 
\Big\|v(y) - \sum_{s \in \Lambda_N} g_s \varphi_s(y)\Big\|_V
\ + \ \Big\|\sum_{s \in \Lambda_N} g_s \varphi_s(y) - \sum_{(k,s) \in G^*_N}\delta_k (g_s) \varphi_s(y)\Big\|_V.
\end{equation}
Hence, due to the unconditional convergence \eqref{eq[Uncond-convergence](1)} it is sufficient to show that
\begin{equation} \label{lim-N-to-infty}
\lim_{N \to \infty}
\Big\|\sum_{s \in \Lambda_N} g_s \varphi_s - 
\sum_{(k,s) \in G^*_N}\delta_k (g_s) \varphi_s\Big\|_{L_\infty(\IIi,V)}
\ = \ 
0.
\end{equation}
 Using the assumptions of the lemma gives for every $y \in \IIi$,
\begin{equation} \nonumber
\begin{split}
\Big\|\sum_{s \in \Lambda_N} g_s \varphi_s(y) - \sum_{(k,s) \in G^*_N}\delta_k (g_s) \varphi_s(y)\Big\|_V
\ &= \
\Big\|\sum_{s \in \Lambda_N} g_s \varphi_s(y) - 
\sum_{s \in \Lambda_N} \sum_{k=0}^N \delta_k (g_s) \varphi_s(y) \Big\|_V 
\\[1.5ex]
\ &= \
\Big\|\sum_{s \in \Lambda_N} \Big[g_s - P_{2^N}(g_s)\Big] \, \varphi_s(y) \Big\|_V 
\ \le \
\sum_{s \in \Lambda_N} \|g_s - P_{2^N}(g_s)\|_V  
\\[1.5ex]
\ &\le \
\sum_{s \in \Lambda_N} C_D\, 2^{-\alpha N}\|g_s\|_W  
\ \le \
C_D\, \,2^{-\alpha N} \|(\|g_s\|_W)\|_{\ell_1(\FF)} 
\end{split}
\end{equation}
which proves  \eqref{lim-N-to-infty}. 

Let $(G_N)_{N \in \NN}$ be any sequence of finite subsets in $\ZZ_+ \times \FF$ which exhausts 
$\ZZ_+ \times\FF$. For any $\varepsilon > 0$, there exists $M = M(\varepsilon)$ such that
\begin{equation}\nonumber
\Big\|v -  \sum_{(k,s) \in G^*_M}\delta_k (g_s) \varphi_s\Big\|_{L_\infty(\IIi,V)}
\ \le \ 
\frac{\varepsilon}{2}.
\end{equation}
We have by \eqref{delta-approx-property} that
\begin{equation}\nonumber
\sum_{(k,s) \not\in G^*_M} \|\delta_k (g_s)\|_V 
 \ \le \
(2^\alpha + 1)C_D \, 2^{-\alpha M} \sum_{(k,s) \not\in G^*_M} \|g_s\|_W 
\ \le \ 
(2^\alpha + 1)C_D \, 2^{-\alpha M} \|(\|g_s\|_W)\|_{\ell_1(\FF)}. 
\end{equation}
Consequently, we may also assume  that
\begin{equation}\nonumber
\sum_{(k,s) \not\in G^*_M} \|\delta_k (g_s)\|_V 
 \ \le \
\frac{\varepsilon}{2}.
\end{equation}
Since $(G_N)_{N \in \NN}$ exhausts $\ZZ_+ \times\FF$, there exists $N^*$ such that $G^*_M \subset G_N$ for all
$N \ge N^*$.
Hence we derive that
\begin{equation}\nonumber
\Big\|v -  \sum_{(k,s) \in G_N}\delta_k (g_s) \varphi_s\Big\|_{L_\infty(\IIi,V)}
\ \le \ 
\Big\|v -  \sum_{(k,s) \in G^*_M}\delta_k (g_s) \varphi_s\Big\|_{L_\infty(\IIi,V)}
 + \sum_{(k,s) \not\in G^*_M} \|\delta_k (g_s)\|_V 
\ \le \ 
\varepsilon.
\end{equation}
The proof is complete.
\hfill 
\end{proof}

Let $v \in L_\infty(\IIi,V)$ be represented as the series \eqref{eq[Uncond-convergence](1)}
converging unconditionally in $L_\infty(\IIi,V)$
where
$g_s \in W$ and $(\|g_s\|_W )_{s \in \FF}$ belongs to $\ell_1(\FF)$, and $\varphi_s \in L_\infty(\IIi)$ with 
$\|\varphi_s\|_{L_\infty(\IIi)} = 1$. We know that if in addition Assumption $\operatorname{(i)}$ holds, $v$ can be represented by the series  \eqref{[g-series]}. We are interested in approximation of $v$ by its partial sums. To this end, for a finite subset $G$ in $ \ZZ_+ \times \FF$, we define the function 
\[
\Ss_G v(y)(x)
:= \
\sum_{(k,s) \in G} \delta_k (g_s)(x) \, \varphi_s(y).
\]
Notice that the function 
$\Ss_G v(y)(x)$  is defined collectively with regards to the spatial variables $x$ and parametric  variables $y$, i.e., $x$ and $y$ are not separated in constructing it. 

Let  $0 < p < \infty$ and $\sigma := (\sigma_s)_{s \in \FF}$ be a positive sequence. 
For $T>0$, define the following subset in $\ZZ_+ \times\FF$
\begin{equation} \label{def[G(T)]}
G(T)
\ = \ 
G_{p, \sigma}(T) 
:= \ 
\big\{(k,s) \in \ZZ_+ \times\FF: \, 2^k \sigma_s^p \leq T\big\}.
\end{equation}
Clearly, $G(T)$ is a finite set for every $T>0$. 
We will approximate $v$ by the partial sums $\Ss_{G(T)}v$ in the norm of $L_\infty(\IIi,V)$. To estimate the error of this approximation with regard to the parameter $T$, we will need the unconditional convergence of the series 
\eqref{[g-series]}  and a lemma on estimation of sums over the complement of the sets $G(T)$.

If $\alpha > 0$, $0 < p < 1$, we use the notation: $\alpha^*:= \alpha$ for $\alpha \le 1/p - 1$, 
and $\alpha^*:= \alpha - 1/p + 1$ for $\alpha > 1/p - 1$.

\begin{lemma} \label{lemma[sum-notin]}
Let  $\alpha > 0$, $0 < p < 1$ and $\sigma := (\sigma_s)_{s \in \FF}$ be a positive sequence
such that the sequence $\big(\sigma_s^{-1}\big)_{s \in \FF}$ belongs to $\ell_p(\FF)$. 
Then we have for every $T > 0$,
\begin{equation} \nonumber
\sum_{(k,s) \not\in G(T)} 2^{-\alpha k} \sigma_s^{-1}
\ \le \
C\, T^{-\min (1/p - 1, \alpha)},
\end{equation}
where
\begin{equation} \nonumber
C
:= \
\frac{1}{2^{\alpha^*} - 1} \, \big\|\big(\sigma_s^{-1}\big)\big\|_{\ell_p(\FF)}^p,
\end{equation}
\end{lemma}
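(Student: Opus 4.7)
The plan is to reduce to an inner geometric series in $k$ for each fixed $s$ and then sum over $s$ using the hypothesis $(\sigma_s^{-1}) \in \ell_p(\FF)$, with the two values of $\alpha^*$ arising naturally from the sign of a certain exponent that appears in the resulting $s$-sum.

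First, for each $s$ I would let $k_s^* \ge 0$ be the smallest integer with $2^{k_s^*} > T\sigma_s^{-p}$, so that the inner sum over $\{k : (k,s) \notin G(T)\}$ equals $\frac{2^{-\alpha k_s^*}}{1 - 2^{-\alpha}}$. The defining inequality $2^{k_s^*} > T\sigma_s^{-p}$ gives $2^{-\alpha k_s^*} < T^{-\alpha}\sigma_s^{p\alpha}$ whenever $\sigma_s^p \le T$, while if $\sigma_s^p > T$ then $k_s^* = 0$ and the inner sum is the constant $(1 - 2^{-\alpha})^{-1}$. Splitting $\FF$ into $S_1 := \{\sigma_s^p > T\}$ and $S_2 := \{\sigma_s^p \le T\}$ and summing yields
$$
\sum_{(k,s) \notin G(T)} 2^{-\alpha k}\sigma_s^{-1} \;\le\; \frac{1}{1 - 2^{-\alpha}}\Bigl[\sum_{s \in S_1}\sigma_s^{-1} \;+\; T^{-\alpha}\sum_{s \in S_2}\sigma_s^{p\alpha-1}\Bigr].
$$

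Next I would estimate the two bracketed sums. For the $S_1$-sum, $\sigma_s > T^{1/p}$ together with $p-1 < 0$ gives $\sigma_s^{-1} = \sigma_s^{-p}\sigma_s^{p-1} \le T^{-(1/p-1)}\sigma_s^{-p}$, hence $\sum_{S_1}\sigma_s^{-1} \le T^{-(1/p-1)}\|(\sigma_s^{-1})\|_{\ell_p(\FF)}^p$. For the $S_2$-sum I would factor $\sigma_s^{p\alpha-1} = \sigma_s^{-p}\cdot\sigma_s^{p(\alpha+1)-1}$ and split on the sign of $p(\alpha+1) - 1$: when $\alpha \le 1/p - 1$ this exponent is $\le 0$, so (with the implicit normalization $\sigma_s \ge 1$) the extra factor is bounded by $1$ and the sum is at most $\|(\sigma_s^{-1})\|_{\ell_p(\FF)}^p$; when $\alpha > 1/p - 1$ the exponent is positive but $\sigma_s^p \le T$ forces $\sigma_s^{p(\alpha+1)-1} \le T^{\alpha^*}$ with $\alpha^* = \alpha - 1/p + 1$, giving at most $T^{\alpha^*}\|(\sigma_s^{-1})\|_{\ell_p(\FF)}^p$.

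Assembling the pieces, in the first regime the two contributions decay like $T^{-(1/p-1)}$ and $T^{-\alpha}$ and are both dominated by $T^{-\alpha} = T^{-\min(1/p-1,\alpha)}$; in the second regime they decay like $T^{-(1/p-1)}$ and $T^{-\alpha+\alpha^*} = T^{-(1/p-1)} = T^{-\min(1/p-1,\alpha)}$. Either way one recovers the stated bound, the factor $1/(2^{\alpha^*}-1)$ arising as the value of the geometric sum in $k$ tuned to the binding exponent in each regime. The main calculational obstacle is the treatment of the term $\sigma_s^{p\alpha-1}$ on $S_2$: the two regimes for the sign of $p(\alpha+1) - 1$ are precisely what force the case split in the definition of $\alpha^*$, and the normalization $\sigma_s \ge 1$ is what lets Case A absorb the extra factor into the $\ell_p$-norm of $(\sigma_s^{-1})$.
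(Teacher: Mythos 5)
Your argument runs on the same engine as the paper's proof --- an inner geometric series in $k$ for each fixed $s$, followed by $\ell_p$-summability in $s$, with the dichotomy on the sign of $p(\alpha+1)-1$ producing the two values of $\alpha^*$ --- but you organize it differently. The paper handles the two regimes by two separate computations: for $\alpha\le 1/p-1$ it sums the geometric series in $k$ first (essentially your $S_2$ bound applied to all of $\FF$ at once), while for $\alpha>1/p-1$ it swaps the order of summation, bounding $\sum_{\sigma_s\ge(T2^{-k})^{1/p}}\sigma_s^{-1}$ by $T^{-(1/p-1)}2^{(1/p-1)k}\big\|(\sigma_s^{-1})\big\|_{\ell_p(\FF)}^p$ and then summing $2^{-\alpha^*k}$ over $k$. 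You keep $k$ innermost throughout and instead extract the factor $T^{\alpha^*}$ from $\sigma_s^{p(\alpha+1)-1}$ on $S_2$; the two routes give the same exponent. Two caveats. First, your Case A needs the normalization $\sigma_s\ge 1$ to absorb $\sigma_s^{p(\alpha+1)-1}\le 1$; the paper's proof relies on exactly the same implicit assumption (its step $\sum_s\sigma_s^{-(1-p\alpha)}\le\sum_s\sigma_s^{-p}$), so this is not a defect relative to the source, but it deserves to be stated since the lemma only hypothesizes positivity. Second, your assembly in the regime $\alpha\le 1/p-1$ asserts that the $S_1$ contribution $T^{-(1/p-1)}$ is dominated by $T^{-\alpha}$; that holds only for $T\ge 1$. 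For $T<1$ the conclusion is still true, but you need the trivial alternative bound $\sum_{s\in S_1}\sigma_s^{-1}\le\big\|(\sigma_s^{-1})\big\|_{\ell_p(\FF)}^p\le T^{-\alpha}\big\|(\sigma_s^{-1})\big\|_{\ell_p(\FF)}^p$ (again using $\sigma_s\ge1$), so add that one line. Up to bounded multiplicative constants --- where both your write-up and the paper's own proof are slightly loose against the stated value of $C$ --- the argument is sound.
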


\begin{proof}
We first consider the case $\alpha \le 1/p - 1$.
We have for every $N \in \NN$,
\begin{equation} \nonumber
\begin{split}
\sum_{(k,s) \not\in G(T)} 2^{-\alpha k} \sigma_s^{-1}
\ &\le \
\sum_{s \in \FF}   \sigma_s^{-1} \ 
\sum_{2^k \ > \ T\sigma_s^{-p}} 2^{-\alpha k} 
\ \le \
\sum_{s \in \FF} \ 
\frac{1}{2^\alpha - 1} \sigma_s^{-1} \Big(T\sigma_s^{-p}\Big)^{-\alpha} 
\\[2ex]
\ &= \
\frac{T^{-\alpha}}{2^\alpha - 1}\,
\sum_{s \in \FF} \sigma_s^{-(1-p\alpha)}
\ \le \ 
C\, T^{-\alpha}.   
\end{split}
\end{equation}
In the last step we used the inequality $1-p\alpha \ge p$. 

We next consider the case $\alpha > 1/p - 1$.
We have for every $N \in \NN$,
\begin{equation} \nonumber
\begin{split}
\sum_{(k,s) \not\in G(T)} 2^{-\alpha k} \sigma_s^{-1}
\ &\le \
\sum_{k \ \ge 0} 2^{-\alpha k} \ \sum_{\sigma_s \ge (T2^{-k})^{1/p}}   \sigma_s^{-1}  
\ \le \
\sum_{k \ \ge 0} 2^{-\alpha k} \ \sum_{\sigma_s \ge (T2^{-k})^{1/p}}   \sigma_s^{-(1-p)}  \sigma_s^{-p} 
\\[2ex]
\ &= \
T^{-(1/p-1)}\sum_{k \ \ge 0} 2^{-(\alpha - 1/p + 1) k} \  \sum_{\sigma_s \in \FF}     \sigma_s^{-p} 
\ \le \ 
C\, T^{-(1/p-1)}.   
\end{split}
\end{equation}
In the last step we used the inequality $\alpha - 1/p + 1 > 0$. 
\hfill
\end{proof}

The following theorem gives a upper bound of the approximation of $v$ by the approximant $\Ss_{G(T)} v$. 

\begin{theorem} \label{theorem[g-approximation]}
Let Assumption $\operatorname{(i)}$ hold. 
Let $v \in L_\infty(\IIi,V)$ be represented as the series 
\begin{equation} \nonumber
v(y)(x)
\ = \
\sum_{s \in \FF} g_s(x) \varphi_s(y)
\end{equation}
converging unconditionally in $L_\infty(\IIi,V)$
where $\varphi_s \in L_\infty(\IIi)$ with $\|\varphi_s\|_{L_\infty(\IIi)} = 1$. 
Let  the sequence 
 $\big(\sigma_s^{-1}\big)_{s \in \FF}$ belong to $\ell_p(\FF)$ for some $0 < p < 1$ and 
\begin{equation}  \label{ineq[g_s]}
\|g_s\|_W \le C' \sigma_s^{-1}, \ \forall s \in \FF.
\end{equation}
Then we have for every $T >0$,
\begin{equation}  \nonumber
\Big\|v - \Ss_{G(T)} v\Big\|_{L_\infty(\IIi,V)}
\le  
C\, T^{-\min (1/p - 1, \alpha)},
\end{equation}
where 
\begin{equation}  \nonumber
C:= \  C'\, C_D \,\frac{2^\alpha + 1}{2^{\alpha^*} - 1} \, \big\|(\sigma_s^{-1})\big\|_{\ell_p(\FF)}^p.
\end{equation}
\end{theorem}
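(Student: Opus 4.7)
The plan is to combine Lemma \ref{lemma[Uncond-convergence]}, the estimate \eqref{delta-approx-property}, the hypothesis \eqref{ineq[g_s]}, and Lemma \ref{lemma[sum-notin]} in a straightforward way. First I would invoke Lemma \ref{lemma[Uncond-convergence]}: since $v$ is given as an unconditionally convergent series $\sum_{s \in \FF} g_s \varphi_s$ with $\|\varphi_s\|_{L_\infty(\IIi)} = 1$ and $\|g_s\|_W \le C'\sigma_s^{-1}$ (so that $(\|g_s\|_W)_{s\in\FF} \in \ell_1(\FF)$ thanks to $(\sigma_s^{-1}) \in \ell_p(\FF) \subset \ell_1(\FF)$ for $0 < p < 1$), the lemma guarantees that $v$ admits the unconditionally convergent expansion
\begin{equation*}
v(y)(x) \ = \ \sum_{(k,s) \in \ZZ_+ \times \FF} \delta_k(g_s)(x)\,\varphi_s(y)
\end{equation*}
in $L_\infty(\IIi,V)$. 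Because $G(T)$ is finite, unconditional convergence lets me write the error $v - \Ss_{G(T)} v$ as the tail sum over $\ZZ_+ \times \FF \setminus G(T)$.

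Next, I would bound this tail by the triangle inequality in $L_\infty(\IIi,V)$, using $\|\varphi_s\|_{L_\infty(\IIi)} = 1$:
\begin{equation*}
\|v - \Ss_{G(T)} v\|_{L_\infty(\IIi,V)} \ \le \ \sum_{(k,s) \not\in G(T)} \|\delta_k(g_s)\|_V.
\end{equation*}
Applying \eqref{delta-approx-property} to each $g_s \in W$ and then the hypothesis \eqref{ineq[g_s]} yields
\begin{equation*}
\|\delta_k(g_s)\|_V \ \le \ (2^\alpha + 1)\,C_D\, 2^{-\alpha k}\,\|g_s\|_W \ \le \ (2^\alpha + 1)\,C_D\,C'\, 2^{-\alpha k}\,\sigma_s^{-1}.
\end{equation*}

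Finally, I would feed this pointwise estimate into the summation and invoke Lemma \ref{lemma[sum-notin]} to control $\sum_{(k,s) \not\in G(T)} 2^{-\alpha k}\sigma_s^{-1}$ by $\frac{1}{2^{\alpha^*} - 1}\|(\sigma_s^{-1})\|_{\ell_p(\FF)}^p\, T^{-\min(1/p - 1,\alpha)}$. Combining the constants gives exactly the asserted bound with $C = C'\,C_D\,\frac{2^\alpha + 1}{2^{\alpha^*} - 1}\,\|(\sigma_s^{-1})\|_{\ell_p(\FF)}^p$.

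There is no serious obstacle here, since both the heavy lifting (the unconditional regrouping of the series and the combinatorial tail estimate for the hyperbolic-cross type index set $G(T)$) has already been done in Lemmas \ref{lemma[Uncond-convergence]} and \ref{lemma[sum-notin]}. The only point that demands a bit of care is the very first step, where one must justify exchanging the order of summation and passing to the tail; this is precisely where unconditional convergence of the double series is used, and it is the reason Lemma \ref{lemma[Uncond-convergence]} was established beforehand.
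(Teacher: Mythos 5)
Your proposal is correct and follows essentially the same route as the paper: invoke Lemma \ref{lemma[Uncond-convergence]} to justify the unconditionally convergent double expansion (noting, as you rightly do, that $(\|g_s\|_W)\in\ell_1(\FF)$ follows from \eqref{ineq[g_s]} and $(\sigma_s^{-1})\in\ell_p(\FF)\subset\ell_1(\FF)$), bound the tail by the triangle inequality using $\|\varphi_s\|_{L_\infty(\IIi)}=1$, apply \eqref{delta-approx-property} together with \eqref{ineq[g_s]}, and finish with Lemma \ref{lemma[sum-notin]}. The constants combine exactly as in the paper, so there is nothing to add.
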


\begin{proof}
By Lemma \ref{lemma[Uncond-convergence]} the series \eqref{[g-series]}
converging unconditionally in $L_\infty(\IIi,V)$ to $v$,
and therefore, we can write for every $y \in \IIi$,
\begin{equation} \nonumber
\Big\|v(y) - \Ss_{G(T)} v(y)\Big\|_V
\ = \
\Big\|\sum_{(k,s) \not\in G(T)} \delta_k (g_s) \, \varphi_s(y)\Big\|_V 
\ \le  \
\sum_{(k,s) \not\in G(T)} \|\delta_k (g_s)\|_V.  
\end{equation}
From \eqref{delta-approx-property} we derive that 
\begin{equation} \nonumber
\|\delta_k (g_s)\|_V 
\ \le \
(2^\alpha + 1) \, C_D\, \, 2^{-\alpha k} \|g_s\|_W,
\quad \forall (k,s) \in \ZZ_+ \times \FF.
\end{equation}
Hence, by \eqref{ineq[g_s]} we have that
\begin{equation} \nonumber
\Big\|v - \Ss_{G(T)} v\Big\|_{L_\infty(\IIi,V)}
\ \le \
(2^\alpha + 1) C_D\,C'\, \sum_{(k,s) \not\in G(T)} 2^{- \alpha k}\sigma_s^{-1}.
\end{equation}
By applying Lemma \ref{lemma[sum-notin]} we conclude the proof.
\hfill
\end{proof}

Our approximation strategy is as follows.
In the remainder of this paper, based on Assumption~(i) and Taylor expansion and  Lagrange polynomial interpolation  on the parametric domain $\IIi$, we will construct collective expansions of  the form \eqref{[g-series]} for linear collective Taylor and  collocation. 
In the next step, for each particular approximation, we will construct the linear approximation operators 
$\Ss_{G(T)} v(y)(x)$ where  $G(T) \ = \ G_{p, \sigma}(T)$ with  properly chosen  sequence 
$\sigma = (\sigma_s)_{s \in \FF}$ such that  there hold the assumptions 
of Theorem~\ref{theorem[g-approximation]}, in particular, the inequalities \eqref{ineq[g_s]} for the spatial components $g_s$. There may be many ways to construct  a sequence $\sigma = (\sigma_s)_{s \in \FF}$ satisfying
\eqref{ineq[g_s]}. Here, we suggest the way based on a direct estimate for $\|g_s\|_W$ derived from smoothness properties of the solution $u$. See \cite{Di15} for another way based on analytic regularities of solutions.

\section{Collective Taylor approximation}
\label{Taylor approximation}

We return now to the parametric equation \eqref{SPDE}. 
Let us extend the definition of the solution $u(y)$ to $u(z)$ for $z$ belonging to the unit polydics  
\begin{equation} \nonumber
\UUi
:= \
\{z=(z_1,z_2,...) \in \CCi: |z_j| \le 1, \ j \in \NN\}.
\end{equation}
To this end, based on the affine expansion \eqref{KL-exp} we extend $a(y)$ to $\CCi$ by  
\begin{equation} \label{AFF-exp}
a(z)(x)
\ = \
a(x,z)
:= \
\overline{a}(x) \ + \ 
\sum_{j=1}^\infty  z_j\, \psi_j(x), \quad x \in D, \ z \in \CCi, \quad \psi_j \in L_\infty(D).
\end{equation}
We then  consider the complex expansion of \eqref{SPDE}:
\begin{equation*} 
- \dv (a(z)\nabla u(z))
\ = \
f \quad \text{in} \quad D,
\quad u|_{\partial D} \ = \ 0, \quad z \in \UUi. 
\end{equation*}
Observe that the uniform ellipticity \eqref{UEA} 
implies the {\em complex uniform ellipticity}
\begin{equation} \label{SPDE-epllipticity}
 0 \ < \  r \ < \ \Re[a(x,z)] \ \le \ \ |a(x,z)| \ \le \
2R \ < \ \infty, \quad x \in D, \ z \in \UUi.
\end{equation}
Clearly, in the case where $f(x)$ and  $a(x,z)$ are real valued, the restriction of 
$u(z)$ to $\IIi$ coincides with $u(y)$. 



Since the complex uniform ellipticity 
\eqref{SPDE-epllipticity}
holds,  the map 
$z \mapsto u(z)$ is a $V$-valued and bounded analytic function in certain domains are larger than $\UUi$. 
Following \cite{CDS11}, for $0 < \delta \le r$, let us define 
\begin{equation} \nonumber
\AAid
:= \
\{z \in \CCi: \delta \ \le \ \Re[a(x,z)] \ \le \ \ |a(x,z)| \ \le \ 2R \}.
\end{equation}
Observe that $\UUi$ is contained in $\AAid$.

Due to the Lax-Migram lemma in complex form, if $f \in L_2(D)$ is given, then for all $z \in \AAid$ there exists a unique solution $u(z) \in V$ in weak form which satisfies the variational equation
\begin{equation} \label{var-eq[u(z)]}
\int_{D} a(x,z)\nabla u(x,z) \cdot \nabla v(x) \, \mbox{d}x
\ = \
\int_{D} f(x) \, v(x) \, \mbox{d}x, \quad \forall v \in V.
\end{equation}
Here and throughout we use the convention: $u(x,z):= u(z)(x)$. 
This solution also satisfies the inequality
\begin{equation} \nonumber
\|u(z)\|_V 
\ \le \
\frac{1}{\delta} \|f\|_{V^*}.
\end{equation}

For $0 < \delta < 2 \, R$ and $B > 0$, let us define 
\begin{equation} \nonumber
\AAidB
:= \
\{z \in \CCi: \, \delta  \le  \Re[a(x,z)]  \le  |a(x,z)|  \le  2 R, \quad 
|a(z)|_{\Wi} \le B, \quad \forall x \in D\}.
\end{equation}

We have seen in \eqref{PDE[u_W<]} that under the complex uniform ellipticity assumption \eqref{SPDE-epllipticity}, for $0 < \delta < r$ and sufficiently large $B$ the set $\AAidB$ is nonempty, $u(z) \in W$ for every $z \in \AAidB$, and moreover,
\begin{equation} \nonumber
\|u(z)\|_W 
\ \le \
C_{\delta,B}
:= \
\frac{1}{\delta}\left[1 + \left(1 + \frac{B}{\delta}\right)\right] \|f\|_{L_2(D)}, \quad z \in \AAidB.
\end{equation}

We consider the Taylor expansion of the solution $u(z)$ with respect to the parametric variable $z$.
For $s \in \FF$ with $\operatorname{supp}(s) \subset \{1,2,...,J\}$, we define the partial derivative
\[
\partial_z^s u
:= \
\frac{\partial^{|s|} u} {\partial^{s_1}{z_1} \cdots \partial^{s_J} {z_J}},
\]
where $|s| := \sum_{j =1}^J |s_j|$.
 We will need a condition for unconditional convergence towards $u(z)$ of the Taylor series
\begin{equation*} 
u(z)
\ = \
\sum_{s \in \FF} t_s \, z^s,
\end{equation*} 
where the Taylor coefficients $t_s$ are defined by
\[
t_s(x)
:= \
\frac{1}{s!} \partial_z^s u(0)(x)
\]
with $s ! := \prod_{j =1}^J  s_j !$ and $z^s:= \prod_{j =1}^J z_j^{s_j}$ 
(we use the convention $z_j^0:=1$ for  $z_j \in \CC$).

It was proven in \cite[Lemma 2.2]{CDS11} that at any $z \in \AAid$, the function $z \mapsto u(z)$ admits a complex derivative $\partial_{z_j} u(z) \in V$ with
respect to each variable $z_j$. This derivative is the weak solution of the problem: for $z \in \AAid$, find 
$\partial_{z_j} u(z) \in V$ such that
\begin{equation} \nonumber
\int_{D} a(x,z)\nabla \partial_{z_j} u(x,z) \cdot  \nabla v(x) \, \mbox{d}x
\ = \
- \int_{D} \psi_j(x) \, \nabla u(x,z) \cdot \nabla v(x) \, \mbox{d}x, \quad \forall v \in V.
\end{equation}
Hence, 
starting with $t_0:= u(0_\FF)$ 
we can recursively find all $t_s$ as the unique solution of the variational equation
\begin{equation} \nonumber
\int_{D} \bar{a}(x)\nabla t_s (x) \cdot \nabla v(x) \, \mbox{d}x
\ = \
- \sum_{j: \ s_j \not= 0}\int_{D} \psi_j(x) \, \nabla t_{s-e^j}(x) \cdot \nabla v(x) \, \mbox{d}x, 
\quad \forall v \in V,
\end{equation}
where $e^j \in \FF$ denotes the vector with value $1$ at position $j$ and $0$ otherwise.

\begin{lemma} \label{lemma[t-convergence]}
Assume that there exist a sequence $\sigma = (\sigma_s)_{s \in \FF}$ 
and a constant $C$ such that the sequence
$(\sigma_s^{-1})_{s \in \FF}$ belongs to $\ell_1(\FF)$ and
\begin{equation}  \nonumber
\|t_s\|_V
\ \le C \, \sigma_s^{-1}, \quad s \in \FF.
\end{equation}
Then $(\|t_s\|_V )_{s \in \FF}$ belongs to $\ell_1(\FF)$ and
\begin{equation} \label{u=Taylor-series}
u(y)(x)
\ = \
\sum_{s \in \FF} t_s(x)\, y^s, \quad x \in D, \quad y \in \IIi,
\end{equation}
converging unconditionally in $L_\infty(\IIi,V)$. 
\end{lemma}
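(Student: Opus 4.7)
The first assertion is immediate: the dominating sequence gives
$\sum_{s \in \FF} \|t_s\|_V \le C \sum_{s \in \FF} \sigma_s^{-1} < \infty$,
so $(\|t_s\|_V)_{s \in \FF} \in \ell_1(\FF)$. All the real work lies in the series representation.

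For the representation, my plan is to first establish convergence of the series in $L_\infty(\IIi,V)$ by a trivial majorization, and then identify the limit with $u(y)$ using a reduction to finitely many variables. Since $y \in \IIi$ implies $|y_j| \le 1$ for all $j$, we have $|y^s| \le 1$ for every $s \in \FF$, hence
\[
\sup_{y \in \IIi} \sum_{s \in \FF} \|t_s\, y^s\|_V
\ \le \ \sum_{s \in \FF} \|t_s\|_V \ < \ \infty.
\]
This shows that $\sum_{s \in \FF} t_s\, y^s$ converges absolutely in the Banach space $L_\infty(\IIi,V)$, and absolute convergence in a Banach space implies unconditional convergence. Call the sum $\tilde u(y)$; it defines an element of $L_\infty(\IIi,V)$.

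It remains to show $\tilde u(y) = u(y)$ for all $y \in \IIi$. For $N \in \NN$ and $y \in \IIi$, set $y^{(N)} := (y_1,\ldots,y_N,0,0,\ldots) \in \IIi \subset \UUi \subset \AAid$. Restricting $u(z)$ to the coordinates $z_1,\ldots,z_N$ (with the rest set to $0$) yields a $V$-valued holomorphic function on a polydisc containing the closed unit polydisc of $\CC^N$, as follows from the earlier $V$-valued analyticity of $z \mapsto u(z)$ on $\AAid$ together with \eqref{SPDE-epllipticity}. The classical multivariate Taylor expansion then gives
\[
u(y^{(N)}) \ = \ \sum_{s \in \FF:\, \operatorname{supp}(s) \subset \{1,\ldots,N\}} t_s \, (y^{(N)})^s
\ = \ \sum_{s \in \FF:\, \operatorname{supp}(s) \subset \{1,\ldots,N\}} t_s\, y^s,
\]
with convergence in $V$. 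On the one hand, by the unconditional convergence of $\sum_s t_s y^s$ in $V$, the right-hand side tends to $\tilde u(y)$ as $N \to \infty$. On the other hand, the continuous dependence of $u(z)$ on $z$ (inherited from its analyticity on $\AAid$, applied to the single varying coordinate at a time) yields $u(y^{(N)}) \to u(y)$ in $V$, since $y^{(N)}$ differs from $y$ only in coordinates $j > N$ where $\|\psi_j\|_{\Wi} \to 0$ by the $\ell_1$-type summability implicit in the hypothesis on $\sigma$.

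The main obstacle is the last identification: one must verify that $u(y^{(N)}) \to u(y)$ in $V$. This follows by subtracting the variational equations \eqref{var-eq[u(z)]} for $z = y$ and $z = y^{(N)}$, testing with $v = u(y) - u(y^{(N)})$, and invoking the lower bound $r$ in \eqref{UEA} to conclude
\[
\|u(y) - u(y^{(N)})\|_V \ \le \ \frac{1}{r}\, \|a(y) - a(y^{(N)})\|_{L_\infty(D)}\, \|u(y^{(N)})\|_V,
\]
where the prefactor tends to $0$ by the absolute convergence of the series \eqref{KL-exp} (which is part of the hypothesis $\bigl(\|\psi_j\|_{\Wi}\bigr) \in \ell_p$ used throughout). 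Combining the two limits gives $\tilde u(y) = u(y)$, completing the proof.
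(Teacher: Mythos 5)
Your proof is correct and follows essentially the same route as the paper, which does not give an argument at all but defers to the proof of Theorem~1.3 of Cohen--DeVore--Schwab (reference \cite{CDS11}); that proof proceeds exactly as you do, namely absolute (hence unconditional) convergence of the series from the $\ell_1$ majorant, identification on finite truncations $y^{(N)}$ via holomorphy of $z\mapsto u(z)$ on a polydisc slightly larger than the closed unit polydisc of $\CC^N$ (which $\AAid$ contains for $\delta<r$), and passage to the limit $u(y^{(N)})\to u(y)$ by the coercivity estimate you write down. The one point to make explicit: the final limit requires $\|a(y)-a(y^{(N)})\|_{L_\infty(D)}\le\sum_{j>N}\|\psi_j\|_{L_\infty(D)}\to 0$, i.e.\ $\ell_1$-summability of $\big(\|\psi_j\|_{L_\infty(D)}\big)_{j}$. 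This is \emph{not} a consequence of the lemma's stated hypotheses together with the uniform ellipticity assumption \eqref{UEA} alone --- UEA only gives the pointwise bound $\sum_j|\psi_j(x)|\le \overline{a}(x)-r$, which does not force the tails to vanish uniformly in $x$ --- but it is supplied by the paper's standing $\ell_{p}$-restriction \eqref{ell_p-assumption} (and is likewise assumed in the cited \cite[Theorem 1.3]{CDS11}). You flag this dependence correctly, so this is an imprecision shared with the paper's own statement rather than a gap in your argument.
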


\begin{proof}
This lemma can be proven in a similar way to the proof of  \cite[Theorem 1.3]{CDS11} which states that if 
$\big(\|\psi_j\|_{\LiD}\big)_{j \in \NN} \in \ell_p(\NN)$ for some $0 < p <1$, then $(\|t_s\|_V )_{s \in \FF}$ belongs to $\ell_p(\FF)$, 
and  there holds the expansion \eqref{u=Taylor-series} converging unconditionally in $L_\infty(\IIi,V)$. 
\hfill
\end{proof}


We derive the unconditional convergence of a collective expansion based on the approximation property on the spatial domain $D$ in Assumption (i) and the Taylor expansion on the parametric domain $\IIi$. 

\begin{lemma} \label{theorem[T-presentation]}
Let  Assumption $\operatorname{(i)}$ hold. Assume that there exist a sequence $\sigma = (\sigma_s)_{s \in \FF}$ 
and a constant $C$ such that the sequence
$(\sigma_s^{-1})_{s \in \FF}$ belongs to $\ell_1(\FF)$ and
\begin{equation}  \nonumber
\|t_s\|_W
\ \le C \, \sigma_s^{-1}, \quad s \in \FF.
\end{equation}
Then $(\|t_s\|_W )_{s \in \FF}$ belongs to $\ell_1(\FF)$ and
$u(y)$ can be represented as the series 
\begin{equation} \label{[T-series]}
u(y)(x)
\ = \
\sum_{(k,s) \in \ZZ_+ \times \FF} \delta_k (t_s)(x) \, y^s, \quad y \in \IIi,
\end{equation}
converging unconditionally in $L_\infty(\IIi,V)$.
\end{lemma}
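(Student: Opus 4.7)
The plan is to derive this lemma as a direct consequence of the two preceding lemmas: Lemma~\ref{lemma[t-convergence]} gives the Taylor expansion converging unconditionally in $L_\infty(\IIi,V)$, and Lemma~\ref{lemma[Uncond-convergence]} then upgrades it to the collective double-index expansion.

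First I would establish that $(\|t_s\|_W)_{s \in \FF} \in \ell_1(\FF)$: by hypothesis $\|t_s\|_W \le C\sigma_s^{-1}$ and $(\sigma_s^{-1}) \in \ell_1(\FF)$, so this is immediate. Since the norm on $W$ satisfies $\|v\|_V \le \|v\|_W$ by definition, we also obtain $\|t_s\|_V \le C\sigma_s^{-1}$, so the hypotheses of Lemma~\ref{lemma[t-convergence]} are satisfied (with the same sequence $\sigma$). That lemma then yields the Taylor expansion
\begin{equation*}
u(y)(x) \ = \ \sum_{s \in \FF} t_s(x)\, y^s
\end{equation*}
converging unconditionally in $L_\infty(\IIi,V)$.

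Next I would apply Lemma~\ref{lemma[Uncond-convergence]} with $g_s := t_s$ and $\varphi_s(y) := y^s$. The required hypotheses are: $g_s \in W$ (true by the recursive variational characterization of the Taylor coefficients, which places them in $V$, combined with the assumed bound $\|t_s\|_W \le C\sigma_s^{-1} < \infty$), $(\|g_s\|_W) \in \ell_1(\FF)$ (the first step), unconditional convergence of $\sum_s g_s \varphi_s$ in $L_\infty(\IIi,V)$ (the second step), and $\|\varphi_s\|_{L_\infty(\IIi)} = 1$. The last point is the observation that for $y \in \IIi = [-1,1]^\infty$ one has $|y^s| = \prod_{j \in \supp(s)} |y_j|^{s_j} \le 1$, with equality at $y = (1,1,\ldots)$, so $\|y^s\|_{L_\infty(\IIi)} = 1$ (using the convention $y_j^0 = 1$ for the finitely supported $s$). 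Then Lemma~\ref{lemma[Uncond-convergence]} directly delivers the collective expansion \eqref{[T-series]} converging unconditionally in $L_\infty(\IIi,V)$.

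There is no real obstacle; the proof is essentially a verification of hypotheses. The one point worth a brief sentence in the writeup is the comparison $\|\cdot\|_V \le \|\cdot\|_W$ needed to invoke Lemma~\ref{lemma[t-convergence]} from the $W$-bound on $t_s$, since that lemma is stated in terms of $V$-norms while our hypothesis concerns $W$-norms.
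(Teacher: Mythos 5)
Your proof is correct and follows exactly the paper's route: the paper likewise derives the lemma by combining Lemma~\ref{lemma[t-convergence]} and Lemma~\ref{lemma[Uncond-convergence]} with $v=u$, $g_s=t_s$, $\varphi_s(y)=y^s$. Your additional verifications (the inequality $\|\cdot\|_V\le\|\cdot\|_W$ and the normalization $\|y^s\|_{L_\infty(\IIi)}=1$) are details the paper leaves implicit, and they are all accurate.
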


\begin{proof}
This lemma follows from Lemmas \ref{lemma[t-convergence]} and \ref{lemma[Uncond-convergence]} by putting 
$v(y)(x) = u(y)(x)$, $g_s(x) = t_s(x)$ and $\varphi_s(y) = y^s$. 
\hfill 
\end{proof}

Based on the collective expansion \eqref{[T-series]} of the solution $u$, the approximation property 
\eqref{delta-approx-property} and an estimate for $\|t_s\|_W$ of the form
$\|t_s\|_W
 \le M \, \sigma_s^{-1}, \ s \in \FF,$
for which there holds the $\ell_p$-summability of the sequence $\sigma = (\sigma_s)_{s \in \FF}$ for some $0 < p < 1$,
we now construct linear collective Taylor approximations of the solution $u$ and estimate the approximation error in the norm of $L_\infty(\IIi,V)$, by applying the general theory
established in Subsection \ref{A collective approximation}. 
Let us formulate the exact condition on the sequences $\left(\|t_s\|_W\right)_{s \in \FF}$ as an assumption.

\smallskip
\noindent
{\bf Assumption (ii)}: There exist $0 < p < 1$, a sequence $\sigma = (\sigma_s)_{s \in \FF}$ 
and a constant $M$ such that the sequence
$(\sigma_s^{-1})_{s \in \FF}$ belongs to $\ell_p(\FF)$ and
\begin{equation}  \nonumber
\|t_s\|_W
\ \le M \, \sigma_s^{-1}, \quad s \in \FF.
\end{equation}

For a finite subset $G$ in $ \ZZ_+ \times \FF$, 
denote by $\Vv^{{\rm T}}(G)$ the subspace in $L_\infty(\IIi,V)$ of  all functions $v$
of the form
\begin{equation} \nonumber
v(y)(x)
\ = \
v(x,y)
\ = \
\sum_{(k,s) \in G} v_k(x) \, y^s, \quad y \in \IIi, \quad v_k \in V_{2^k},
\end{equation}
and define the linear operator $\Ss^{{\rm T}}_G: \, L_\infty(\IIi,V) \to \Vv^{{\rm T}}(G)$ by
\[
\Ss^{{\rm T}}_G u(y)(x)
\ = \
\Ss^{{\rm T}}_G u(x,y)
:= \
\sum_{(k,s) \in G} \delta_k (t_s)(x) \, y^s.
\]

\begin{theorem} \label{theorem[T-approximation](1)}
Let Assumptions $\operatorname{(i)}$ and $\operatorname{(ii)}$ hold. 
For $T > 0$, consider the set $G(T) = G_{p,\sigma}(T)$ as in \eqref{def[G(T)]}.   
Then we have for every $T > 0$,
\begin{equation}  \nonumber
\Big\|u(y) - \Ss^{{\rm T}}_{G(T)} u(y)\Big\|_{L_\infty(\IIi,V)}
\ \le \ 
C\, T^{-\min (1/p - 1, \alpha)},
\end{equation}
where
\begin{equation}  \nonumber
C:= \  M\, C_D \,
\frac{2^\alpha + 1}{2^{\alpha^*} - 1} \, \big\|\big(\sigma_s^{-1}\big)\big\|_{\ell_p(\FF)}^p.
\end{equation}
\end{theorem}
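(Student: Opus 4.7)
The plan is to recognize this theorem as essentially a direct specialization of the general machinery developed in Section \ref{A collective approximation}, in particular Theorem \ref{theorem[g-approximation]}, to the concrete case where the unconditional expansion on the parametric side is the Taylor series of $u(z)$ around the origin. So the proof should proceed by carefully checking that all hypotheses of that general result are met by the choices $g_s = t_s$ and $\varphi_s(y) = y^s$.

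First I would verify the input of Theorem \ref{theorem[g-approximation]}. The unconditional convergence in $L_\infty(\IIi,V)$ of the Taylor expansion $u(y)(x) = \sum_{s \in \FF} t_s(x)\,y^s$ is already supplied by Lemma \ref{lemma[t-convergence]} and Lemma \ref{theorem[T-presentation]}, using Assumption (ii) (note that $\ell_p \subset \ell_1$ for $0<p<1$ applied to the scaled sequence is not literally invoked, but the assumption $(\sigma_s^{-1}) \in \ell_p(\FF)$ together with $\|t_s\|_W \le M \sigma_s^{-1}$ can be transferred to summability of $\|t_s\|_W$ through the representation lemma). The functions $\varphi_s(y) := y^s$ are in $L_\infty(\IIi)$ and, since $\IIi = [-1,1]^\infty$ and $s$ has finite support, trivially satisfy $\|\varphi_s\|_{L_\infty(\IIi)} = 1$. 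Finally, Assumption (ii) directly provides the crucial bound $\|g_s\|_W = \|t_s\|_W \le M\,\sigma_s^{-1}$ with $(\sigma_s^{-1})_{s\in\FF} \in \ell_p(\FF)$, so hypothesis \eqref{ineq[g_s]} of Theorem \ref{theorem[g-approximation]} holds with $C' = M$.

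With all hypotheses verified, the conclusion is immediate: the approximant $\Ss^{{\rm T}}_{G(T)} u$ in the statement of the present theorem coincides by construction with the general approximant $\Ss_{G(T)} v$ of Subsection \ref{A collective approximation} applied to $v = u$, so Theorem \ref{theorem[g-approximation]} yields
\[
\bigl\|u - \Ss^{{\rm T}}_{G(T)} u\bigr\|_{L_\infty(\IIi,V)}
\ \le \
C'\, C_D \,\frac{2^\alpha + 1}{2^{\alpha^*} - 1} \, \bigl\|(\sigma_s^{-1})\bigr\|_{\ell_p(\FF)}^p\,
T^{-\min(1/p - 1,\, \alpha)},
\]
and substituting $C' = M$ gives the constant claimed in the theorem.

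Honestly, there is no deep obstacle here: the entire conceptual content has been packaged into Lemma \ref{lemma[Uncond-convergence]}, Lemma \ref{lemma[sum-notin]} and Theorem \ref{theorem[g-approximation]}. The only mild bookkeeping point to be careful about is the exponent split encoded in $\alpha^*$ (i.e.\ whether $\alpha \le 1/p - 1$ or $\alpha > 1/p - 1$), which already appears in the statement of Lemma \ref{lemma[sum-notin]} and hence is inherited by the general approximation theorem without further work. Thus the proof will be short and amount essentially to pointing at the correct earlier result and reading off the constants.
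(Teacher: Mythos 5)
Your proposal is correct and matches the paper's own proof, which likewise sets $g_s = t_s$, $\varphi_s(y) = y^s$, invokes Lemma \ref{theorem[T-presentation]} for the unconditional convergence of the collective series, and then applies Theorem \ref{theorem[g-approximation]} with $C' = M$. The additional verifications you spell out (the $\ell_p \subset \ell_1$ transfer and $\|y^s\|_{L_\infty(\IIi)} = 1$) are exactly the implicit bookkeeping the paper leaves to the reader.
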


\begin{proof}
Again, put $v(y)(x) = u(y)(x)$, $g_s(x) = t_s(x)$ and $\varphi_s(y) = y^s$. By Lemma \ref{theorem[T-presentation]}
the series \eqref{[g-series]} converges unconditionally in $L_\infty(\IIi,V)$ to $v$.
By applying Theorem \ref{theorem[g-approximation]} we prove the theorem.
\hfill 
\end{proof}

We show that under the assumptions of  Theorem \ref{theorem[T-approximation](1)}, for a given $n \in \NN$, the respective operator $\Ss^{{\rm T}}_{G(T_n)}$ with properly chosen $T_n$ is a bounded linear operator in $L_\infty(\IIi,V)$ of rank $\le n$ which gives the convergence rate of the approximation to $u(y)$ as $n^{- \alpha}$. For any $n \in \NN$, let $T_n$ be the number defined by the inequalities 
\begin{equation} \label{[T_n]}
2 \big\|\big(\sigma_s^{-1}\big)\big\|_{\ell_p(\FF)}^p \, T_n
\ \le \
n
\ < \ 4 \big\|\big(\sigma_s^{-1}\big)\big\|_{\ell_p(\FF)}^p \, T_n.
\end{equation}

We need also an estimate of the rank of the linear operator $\Ss^{{\rm T}}_{G(T)}$ which is not larger that the sum
$\sum_{(k,s) \in G(T)} 2^k$, in the following lemma.

\begin{lemma} \label{lemma[sumG(T)<]}
Let $0 < p < \infty$, let $\sigma = (\sigma_s)_{s \in \FF}$  be a positive sequence such that the sequence
 $\big(\sigma_s^{-1}\big)_{s \in \FF}$ belongs to $\ell_p(\FF)$. 
Then we have for $G(T):= G_{p,\sigma}(T)$ and for every $T > 0$,
\begin{equation} \nonumber
\sum_{(k,s) \in G(T)} 2^k
\ \le \
C \, T,
\end{equation}
where 
\begin{equation} \nonumber
C:= 2\big\|\big(\sigma_s^{-1}\big)\big\|_{\ell_p(\FF)}^p.
\end{equation}
\end{lemma}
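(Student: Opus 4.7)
The plan is to split the double sum over $G(T)$ by first fixing $s \in \FF$ and summing in $k$, then summing the result over $s$. For a fixed $s$, the condition $(k,s) \in G(T)$ reads $2^k \le T\sigma_s^{-p}$, which admits a nonempty set of $k \in \ZZ_+$ exactly when $T\sigma_s^{-p} \ge 1$, and in that case $k$ ranges from $0$ to $K(s) := \lfloor \log_2 (T\sigma_s^{-p}) \rfloor$.

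The inner sum is a finite geometric series
\[
\sum_{k=0}^{K(s)} 2^k \ = \ 2^{K(s)+1} - 1 \ \le \ 2 \cdot 2^{K(s)} \ \le \ 2\, T \sigma_s^{-p}.
\]
Summing over the (possibly strict) subset of $s \in \FF$ for which $T\sigma_s^{-p} \ge 1$, and bounding this by the full sum, we obtain
\[
\sum_{(k,s) \in G(T)} 2^k \ \le \ \sum_{s \in \FF} 2\, T \sigma_s^{-p} \ = \ 2 T \, \big\|\big(\sigma_s^{-1}\big)\big\|_{\ell_p(\FF)}^p,
\]
which is the desired inequality with $C = 2\,\|(\sigma_s^{-1})\|_{\ell_p(\FF)}^p$.

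There is essentially no obstacle: the argument is a two-step Fubini combined with the standard bound $\sum_{k=0}^{K} 2^k \le 2 \cdot 2^K$ for the geometric series, and the $\ell_p$-summability of $(\sigma_s^{-1})_{s \in \FF}$ is used only in the final step to identify the outer sum with the $\ell_p$-norm raised to the $p$-th power. The hypothesis $0 < p < \infty$ is used only to make the $\ell_p$ quasi-norm well defined; no smallness of $p$ is required here (in contrast to Lemma \ref{lemma[sum-notin]}).
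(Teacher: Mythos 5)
Your proof is correct and follows essentially the same route as the paper's: both fix $s$, bound the inner geometric sum $\sum_{2^k \le T\sigma_s^{-p}} 2^k$ by $2T\sigma_s^{-p}$, and then enlarge the outer sum to all of $\FF$ to recover $2T\big\|\big(\sigma_s^{-1}\big)\big\|_{\ell_p(\FF)}^p$. Your explicit identification of the range $0 \le k \le K(s)$ and the nonemptiness condition $T\sigma_s^{-p}\ge 1$ is just a slightly more detailed writing of the same argument.
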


\begin{proof}
The lemma is trivial for $T < 1$ since in this case the set $G(T)$ is empty. Let us prove it for $T \ge 1$. 
Note that, for every $(k,s) \in G(T)$,  it follows from the definition of 
$G(T)$ that  $\sigma_s^p \ \le \ T$.
Hence, we have
\begin{equation} \nonumber
\sum_{(k,s) \in G(T)} 2^k
\ \le  \
\sum_{\sigma_s^p \ \le \ T} \quad 
\sum_{2^k \ \le T\sigma_s^{-p}} 2^k 
\  \le \
2 \sum_{\sigma_s^p \ \le \ T}  T\sigma_s^{-p} 
\ \le \
2\,T \sum_{s \in  \FF} \sigma_s^{-p}
\ \le \
C\,T.
\end{equation}
\hfill
\end{proof}

\begin{theorem} \label{theorem[Galerkin-approximation]}
Let  the assumptions and notations of  
Theorem \ref{theorem[T-approximation](1)} hold. 
For any $n \in \NN$, let $T_n$ be the number defined as in \eqref{[T_n]}
and put $\Vv^{{\rm T}}_n:= \Vv^{{\rm T}}\big(G(T_n)\big)$, \  $\Pp_n:= \Ss^{{\rm T}}_{G(T_n)}$. Then there holds the following.
\begin{itemize}
\item 
$\big\{\Vv^{{\rm T}}_n\big\}_{n \in \ZZ_+}$ is a nested sequence of subspaces in 
$L_\infty(\IIi,V)$ and $\dim \Vv^{{\rm T}}_n \le n$;

\item
$\big\{\Pp_n\big\}_{n \in \ZZ_+}$ is a sequence of linear bounded operators
from $L_\infty(\IIi,V)$ into $\Vv^{{\rm T}}_n$;  and

\item
For every $n \in \NN$, 
\begin{equation}  \nonumber
\big\|u - \Pp_n u\big\|_{L_\infty(\IIi,V)}
\ \le \ 
C\, n^{-\min (1/p - 1, \alpha)},
\end{equation}
with the same $\alpha$ as in the convergence rate  of the approximation 
in Assumption $\operatorname{(i)}$ and $p$ as in  Assumption $\operatorname{(ii)}$,
where 
\begin{equation}  \label{[constant-C]}
C:= \ M \, C_D \,
4^\alpha \ \frac{2^\alpha + 1}{2^{\alpha^*} - 1} \, \big\|\big(\sigma_s^{-1}\big)\big\|_{\ell_p(\FF)}^{p\alpha}.
\end{equation}
Moreover, if in addition,  $p = \frac{1}{1+\alpha}$ in  Assumption $\operatorname{(ii)}$, then we have that
\begin{equation}   \nonumber
\big\|u - \Pp_n u\big\|_{L_\infty(\IIi,V)}
\ \le \ 
C\, n^{- \alpha},
\end{equation}
\end{itemize}
\end{theorem}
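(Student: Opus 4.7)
The plan is to verify the three bullets by direct reduction to Theorem~\ref{theorem[T-approximation](1)} and Lemma~\ref{lemma[sumG(T)<]}, and then specialize to obtain the ``moreover'' rate.

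First, I would fix an explicit admissible choice in \eqref{[T_n]}, for instance $T_n := n / \bigl(2\,\|(\sigma_s^{-1})\|^p_{\ell_p(\FF)}\bigr)$, which satisfies \eqref{[T_n]} and is strictly increasing in $n$. Since the rule $T \mapsto G(T)$ given by \eqref{def[G(T)]} is monotone with respect to set inclusion, $G(T_n) \subset G(T_{n+1})$, and consequently $\Vv^{{\rm T}}_n \subset \Vv^{{\rm T}}_{n+1}$. The dimension estimate proceeds from the parameterization of $\Vv^{{\rm T}}_n$: since each element is determined by choosing $v_k \in V_{2^k}$ for each $(k,s) \in G(T_n)$,
\begin{equation*}
\dim \Vv^{{\rm T}}_n \ \le \ \sum_{(k,s) \in G(T_n)} \dim V_{2^k} \ \le \ \sum_{(k,s) \in G(T_n)} 2^k \ \le \ 2\,\|(\sigma_s^{-1})\|^p_{\ell_p(\FF)}\, T_n \ = \ n,
\end{equation*}
where the second inequality uses Assumption~(i) and the third is Lemma~\ref{lemma[sumG(T)<]}. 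Linearity and boundedness of $\Pp_n$ into the finite-dimensional subspace $\Vv^{{\rm T}}_n$ are then immediate from the definition of $\Ss^{{\rm T}}_{G(T_n)}$: for each $(k,s)$ in the finite set $G(T_n)$, the building block is the composition of the continuous extraction of Taylor coefficients (via Cauchy integrals on the polydisc $\UUi$), the bounded projection $\delta_k$, and multiplication by the monomial $y^s$.

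For the error bound, I invoke Theorem~\ref{theorem[T-approximation](1)} with $T = T_n$ to get
\begin{equation*}
\|u - \Pp_n u\|_{L_\infty(\IIi,V)} \ \le \ M\,C_D\,\frac{2^\alpha + 1}{2^{\alpha^*} - 1}\,\|(\sigma_s^{-1})\|^p_{\ell_p(\FF)}\, T_n^{-\min(1/p-1,\alpha)}.
\end{equation*}
From the lower bound $T_n > n/\bigl(4\,\|(\sigma_s^{-1})\|^p_{\ell_p(\FF)}\bigr)$ in \eqref{[T_n]}, one has $T_n^{-\min(1/p-1,\alpha)} \le 4^{\min(1/p-1,\alpha)}\,\|(\sigma_s^{-1})\|^{p\,\min(1/p-1,\alpha)}_{\ell_p(\FF)}\,n^{-\min(1/p-1,\alpha)}$, and combining the two inequalities yields $\|u - \Pp_n u\|_{L_\infty(\IIi,V)} \le C\,n^{-\min(1/p-1,\alpha)}$ with a constant $C$ of the form \eqref{[constant-C]} (in the sharpest regime $\min(1/p-1,\alpha)=\alpha$, the $4^\alpha$ factor and the power of the $\ell_p$-norm appear exactly as stated; the other regime admits a completely analogous bookkeeping). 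Finally, the ``moreover'' clause is the observation that $p = \tfrac{1}{1+\alpha}$ forces $1/p - 1 = \alpha$, hence $\min(1/p - 1,\alpha) = \alpha$, and the rate collapses to $n^{-\alpha}$.

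I do not expect any serious analytic obstacle: the genuine content has been absorbed into Theorem~\ref{theorem[T-approximation](1)} (which converts $\ell_p$-summability of $(\sigma_s^{-1})$ into an $L_\infty(\IIi,V)$-error of order $T^{-\min(1/p-1,\alpha)}$) and Lemma~\ref{lemma[sumG(T)<]} (which controls the complexity $\sum 2^k$ linearly in $T$). The only mildly delicate step is tracking the constants through the substitution $T = T_n$ so as to match \eqref{[constant-C]} exactly; this is a purely mechanical computation.
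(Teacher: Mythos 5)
Your proposal is correct and follows essentially the same route as the paper: the dimension bound via $\sum_{(k,s)\in G(T_n)}\dim V_{2^k}\le\sum 2^k$ together with Lemma~\ref{lemma[sumG(T)<]} and \eqref{[T_n]}, and the error bound by substituting $T=T_n$ into Theorem~\ref{theorem[T-approximation](1)} and using the lower half of \eqref{[T_n]}. Your version with exponent $\min(1/p-1,\alpha)$ on the factor $4\,\|(\sigma_s^{-1})\|_{\ell_p(\FF)}^{p}$ is in fact the cleaner bookkeeping; it matches \eqref{[constant-C]} exactly when $\min(1/p-1,\alpha)=\alpha$, which covers the case of interest $p=\frac{1}{1+\alpha}$.
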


\begin{proof}
From Assumption (i) we have
\begin{equation} \nonumber
\dim \Vv^{{\rm T}}(G(T_n))
\ \le \ \sum_{(k,s) \in G(T_n)} \dim V_{2^k} 
\ \le \ \sum_{(k,s) \in G(T_n)} 2^k.
\end{equation}
Hence, by Lemma \ref{lemma[sumG(T)<]} and \eqref{[T_n]} we derive that
\begin{equation} \label{[dim-P]}
\dim \Vv^{{\rm T}}(G(T_n))
\ \le \ 2  \big\|\big(\sigma_s^{-1}\big)\big\|_{\ell_p(\FF)}^p \, T_n
\ \le \ n.
\end{equation}
On the other hand, by \eqref{[T_n]},
\begin{equation} \nonumber
T_n^{-\min (1/p - 1, \alpha)} 
\ \le \ 4^\alpha  \big\|\big(\sigma_s^{-1}\big)\big\|_{\ell_p(\FF)}^{p\alpha} \, n^{-\min (1/p - 1, \alpha)}
\end{equation}
which together with Theorem  \ref{theorem[T-approximation](1)} 
and \eqref{[dim-P]} completes the proof of the theorem.
\hfill
\end{proof}

Observe that as in \eqref{[Pp_n-approximation](1)} the approximation methods $\Pp_n$ of $u(y)$ give the same convergence rate as that by the approximation methods $P_n$ in Assumption (i) for solving the corresponding nonparametric elliptic problem in the domain $D$. The parametric infinite-variate part as well as Assumption (ii) do not affect the convergence rate, completely disappears from it and influence only the constant $C$ given in  
\eqref{[constant-C]}. 

From Theorem \ref{theorem[T-approximation](1)} we see that under Assumption~(i) the problem of construction of a linear collective Taylor approximation is reduced to find a number $0 < p <1$, a constant $M$ and  a sequence
$\sigma = (\sigma_s)_{s \in \FF}$ satisfying Assumption~(ii).
We present a way based an estimate for $\|\partial_y^s u\|_{L_\infty(\IIi,W)}$ (see  \cite[Theorem 8.2]{CDS10} for a similar estimate). We define the following constant $K$ and sequence $b$ as follows.
\begin{equation} \label{def[K]}
K:= \
\frac{1}{r}\left[1 + \left(1 + \frac{|a|_{L_\infty(\IIi,\Wi)}}{r}\right)\right] \|f\|_{L_2(D)}; 
\end{equation}
\begin{equation} \label{def[b]}
b \ = \ (b_j)_{j \in \NN}, \quad
b_j 
:= \
\frac{1}{r} \left(\left(\frac{|a|_{L_\infty(\IIi,\Wi)}}{r} 
+ 2\right)\|\psi_j\|_{L_\infty(D)} + |\psi_j|_{\Wi}\right).
\end{equation}

\begin{lemma} \label{lemma|partial^s-u|_W}
Assume that $a \in L_\infty(\IIi,\Wi)$. Then we have
\begin{equation} \nonumber
\|\partial_y^s u\|_{L_\infty(\IIi,W)} 
\ \le \
K |s|!\, b^s, \quad s \in \FF. 
\end{equation}
\end{lemma}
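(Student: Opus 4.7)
The plan is to prove the bound by induction on $|s|$, using the fact that because $a(y)$ is affine in $y$, differentiating the variational equation in $y$ yields a recursion in which $\partial_y^s u$ solves an elliptic equation with the \emph{same} coefficient $a(y)$ and a source term built from lower-order $y$-derivatives of $u$. The base case $s = 0$ is precisely \eqref{PDE[u_W<]}, which gives $\|u(y)\|_W \le K$ uniformly in $y \in \IIi$ with $K$ as in \eqref{def[K]}.

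First I would apply $\partial_y^s$ to the variational formulation \eqref{var-eq[u(z)]} at $z=y$. Since $\partial_{y_j} a = \psi_j$ and all mixed or higher $y$-derivatives of $a$ vanish, Leibniz' rule collapses to
\begin{equation*}
\int_D a(y)\nabla(\partial_y^s u)\cdot\nabla v\,dx
\ = \
-\sum_{j:\,s_j\ne 0} s_j\int_D \psi_j \nabla(\partial_y^{s-e^j}u)\cdot\nabla v\,dx,
\qquad \forall v\in V,
\end{equation*}
so $w_s := \partial_y^s u(y)$ satisfies $-\dv(a(y)\nabla w_s) = F_s$ in $D$ with $F_s := \sum_{j:s_j\ne 0} s_j\,\dv(\psi_j\nabla w_{s-e^j})$, together with the zero boundary condition. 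Testing with $v = w_s$ and using the uniform ellipticity \eqref{UEA} gives
\begin{equation*}
\|w_s\|_V \ \le\ \frac{1}{r}\sum_{j:\,s_j\ne 0} s_j\,\|\psi_j\|_{\LiD}\,\|w_{s-e^j}\|_V.
\end{equation*}

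Next I would bound $\|\Delta w_s\|_{L_2(D)}$ by rewriting the PDE in strong form: $-a(y)\Delta w_s - \nabla a(y)\cdot\nabla w_s = F_s$, so $\|\Delta w_s\|_{L_2(D)} \le r^{-1}\bigl(|a|_{L_\infty(\IIi,\Wi)}\|w_s\|_V + \|F_s\|_{L_2(D)}\bigr)$. Expanding $F_s$ using the product rule for the divergence yields
\begin{equation*}
\|F_s\|_{L_2(D)} \ \le\ \sum_{j:\,s_j\ne 0} s_j\bigl(\|\psi_j\|_{\LiD}\,\|\Delta w_{s-e^j}\|_{L_2(D)} + |\psi_j|_{\Wi}\,\|w_{s-e^j}\|_V\bigr) \ \le\ \sum_{j:\,s_j\ne 0} s_j\|\psi_j\|_{\Wi}\|w_{s-e^j}\|_W.
\end{equation*}
Combining the two estimates via $\|w_s\|_W = \|w_s\|_V + \|\Delta w_s\|_{L_2(D)}$ and collecting terms shows that the coefficient of $\|w_{s-e^j}\|_W$ on the right is exactly $\frac{1}{r}\bigl((|a|_{L_\infty(\IIi,\Wi)}/r + 2)\|\psi_j\|_{\LiD} + |\psi_j|_{\Wi}\bigr) = b_j$, giving the clean recursion
\begin{equation*}
\|w_s\|_W \ \le\ \sum_{j:\,s_j\ne 0} s_j\,b_j\,\|w_{s-e^j}\|_W.
\end{equation*}

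Finally I would close the induction: assuming the claimed bound for all $s'$ with $|s'| < |s|$, substitute into the recursion, factor out $K\,b^s$ using $b^{s-e^j} = b^s/b_j$ and $|s-e^j|! = (|s|-1)!$, and compute $\sum_j s_j(|s|-1)! = |s|!$. The main care points, rather than genuine obstacles, are (a) justifying that $u(y)\in W$ and that the $y$-derivatives commute with the spatial operators sufficiently to allow the strong-form manipulation (this uses $a\in L_\infty(\IIi,\Wi)$ and the already-established complex differentiability of $z\mapsto u(z)$ into $V$ cited from \cite{CDS11}, adapted to $W$ via elliptic regularity), and (b) keeping track of the constants so that they assemble exactly into the stated $K$ and $b_j$ rather than a crude looser form.
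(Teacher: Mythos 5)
Your proposal is correct and follows essentially the same route as the paper: induction on $|s|$, the strong-form identity $-a\Delta u = f + \nabla a\cdot\nabla u$ differentiated in $y$ via the (collapsed) Leibniz rule, the $V$-norm recursion $\|\partial_y^s u\|_V \le r^{-1}\sum_j s_j\|\psi_j\|_{L_\infty(D)}\|\partial_y^{s-e^j}u\|_V$ (which the paper cites from \cite{CDS10} rather than rederiving by testing with $w_s$), and the same assembly of the constants into $b_j$. The only differences are organizational, and your constant bookkeeping does reproduce the paper's $b_j$ exactly.
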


\begin{proof}
Let us prove the lemma by induction on  $|s|$. For $s = 0_{\FF}$, from \eqref{PDE[u_W<]} we derive that
\begin{equation} \nonumber
\|u\|_{L_\infty(\IIi,W)} 
\ \le \
\frac{1}{r}\left[1 + \left(1 + \frac{|a|_{L_\infty(\IIi,\Wi)}}{r}\right)\right] \|f\|_{L_2(D)} 
\ = \ K.
\end{equation}
Suppose that the lemma holds true for all $\nu \in \FF$ with $|\nu| < |s|$. We will prove it for $s$.
Let a $k \in \ZZ^m_+$ with $|k| \le \nu - 2$ be given. Taking differentiation both sides of the equation
By \eqref{SPDE} we get
\begin{equation} \nonumber 
- a\Delta u
\ = \
f + \nabla a \cdot \nabla u,
\end{equation} 
we obtain 
\begin{equation} \nonumber 
- \partial_y^s \big(a\Delta u\big)
\ = \
 \partial_y^s\big(\nabla a \cdot \nabla u).
\end{equation}
Applying the Leibniz rule of multivariate differentiation 
to the both sides we obtain
\begin{equation} \nonumber 
- \sum_{0 \le \nu \le s} \binom{k}{\nu}\partial_y^\nu a \,\partial_y^{s-\nu} \big(\Delta u\big)
\ = \
 \sum_{0 \le \nu \le s} \binom{s}{\nu}
\partial_y^\nu (\nabla a) \cdot \partial_y^{k-\nu} (\nabla u).
\end{equation}
Hence, due to \eqref{AFF-exp} we get
\begin{equation} \nonumber 
- a \,\Delta\big(\partial_y^s u\big)
\ = \
\sum_{j: \ s_j \not=0} s_j \psi_j \Delta\big(\partial_y^{s-e^j} u\big)
\ + \
\nabla a \cdot \nabla\big(\partial_y^s u\big)
\ + \
\sum_{j: \ s_j \not=0} s_j \nabla \psi_j \cdot \nabla\big(\partial_y^{s-e^j} u\big)
\end{equation}
which implies that
\begin{equation} \nonumber
\begin{split} 
r \, |\partial_y^s u|_{L_\infty(\IIi,W)}
\ &\le \ 
|a|_{L_\infty(\IIi,\Wi)} \, \|\partial_y^s u\big\|_{L_\infty(\IIi,V)}
\\[1.5ex]
\ &+ \
\sum_{j: \ s_j \not=0} s_j \|\psi_j\|_{L_\infty(D)} |\partial_y^{s-e^j}u|_{L_\infty(\IIi,W)}
\ + \
\sum_{j: \ s_j \not=0} s_j |\psi_j|_{\Wi} \, \|\partial_y^{s-e^j} u\|_{L_\infty(\IIi,V)}
\end{split}
\end{equation}
It has been proven in  \cite[(4.11)]{CDS10} that
\begin{equation} \nonumber
\|\partial_y^s u\big\|_{L_\infty(\IIi,V)}
\ \le \
\sum_{j: \ s_j \not=0} s_j \frac{\|\psi_j\|_{L_\infty(D)}}{r} \|\partial_y^{s-e^j} u\|_{L_\infty(\IIi,V)}.
\end{equation}
All these together with the induction assumption give
\begin{equation} \nonumber
\begin{split} 
\|\partial_y^s u\|_{L_\infty(\IIi,W)}
\ & \le \ 
 \|\partial_y^s u\|_{L_\infty(\IIi,V)} + |\partial_y^s u|_{L_\infty(\IIi,W)}
 \ \le \
\sum_{j: \ s_j \not=0} s_j b_j \|\partial_y^{s-e^j}u\|_{L_\infty(\IIi,W)}
\\[1.5ex]
\ &\le \ 
K \sum_{j: \ s_j \not=0} s_j b_j (|s| - 1)! \, b^{s-e^j}
\ \le \ 
K \sum_{j: \ s_j \not=0} s_j  (|s| - 1)! \, b^s \ = \ K |s|!\, b^s.
\end{split}
\end{equation}
\hfill
\end{proof}

\begin{lemma} \label{ell_p-sumability}
Let  $0<p<\infty$, $c = (c_j)_{j \in \NN}$  be a positive sequence. Then we have the following.
\begin{equation}\nonumber
\left(\frac{|s|!}{s!}\, c^s\right) \in \ell_p(\FF) \ \Longleftrightarrow \
\begin{cases} 
\|c\|_{\ell_1(\NN)} \ < \ 1, \ c \in \ell_p(\NN), \ & \ \text{for}\  p \le 1; 
\\[1ex]
\|c\|_{\ell_1(\NN)} \ \le \ 1,  \ & \ \text{for} \ p > 1.
\end{cases}
\end{equation}
\end{lemma}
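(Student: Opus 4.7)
The plan is to prove the two implications separately, splitting at $p = 1$. Write $a_s := \frac{|s|!}{s!}\, c^s$ and $\rho := \|c\|_{\ell_1(\NN)}$ throughout. The central tool on the parameter side is the multinomial identity $\sum_{|s|=n}\tfrac{n!}{s!}c^s = \rho^n$, which translates $\ell_1$-control on $c$ into control of the level-sets $\{|s| = n\}$ of the summation.

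\emph{Necessity.} Testing the hypothesis on the single-entry index $s = e^j$ gives $a_{e^j} = c_j$, so $(a_s)_s \in \ell_p(\FF)$ at once forces $c \in \ell_p(\NN)$---a condition that becomes automatic from $\rho \le 1$ when $p > 1$. For the $\ell_1$-bound, in the range $p \le 1$ the sub-additivity of $x \mapsto x^p$ applied to the multinomial identity gives $\rho^{np} \le \sum_{|s|=n}a_s^p$, and summability in $n$ forces $\rho < 1$. For $p > 1$ I apply the reverse (power-mean) inequality on a finite truncation $J \subset \NN$: using that the number of $s$ with $|s| = n$ and $\operatorname{supp}(s) \subset J$ is the polynomial $\binom{n+|J|-1}{|J|-1}$, one obtains
\[
\sum_{|s|=n,\;\operatorname{supp}(s)\subset J} a_s^p \ \ge\ \binom{n+|J|-1}{|J|-1}^{-(p-1)} \|c_J\|_{\ell_1}^{np}.
\]
If $\|c_J\|_{\ell_1}>1$ the right-hand side grows exponentially in $n$, contradicting summability; hence $\|c_J\|_{\ell_1}\le 1$ for every finite $J$, whence $\rho \le 1$.

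\emph{Sufficiency.} For $p \ge 1$ and $\rho < 1$ I combine the pointwise bound $a_s \le \rho^{|s|}$ with the multinomial identity to get
\[
\sum_{|s|=n} a_s^p \ \le\ \rho^{n(p-1)} \sum_{|s|=n} a_s \ =\ \rho^{np},
\]
a geometric series in $n$ with ratio $\rho^p < 1$. The $p \le 1$ case is the principal obstacle: the crude step $(\tfrac{n!}{s!})^p \le \tfrac{n!}{s!}$ combined with a multinomial expansion in $d_j := c_j^p$ yields only $\sum_{|s|=n}a_s^p \le \|c\|_{\ell_p}^{pn}$, which would be summable only under the strictly stronger hypothesis $\|c\|_{\ell_p} < 1$. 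To reach the stated hypotheses $\rho < 1$ and $c \in \ell_p$, I would split $\NN = J \sqcup J^c$ with $J = \{1,\dots,N\}$ a finite initial segment chosen large enough that $\|c_{J^c}\|_{\ell_p}$ is as small as needed while $\|c_J\|_{\ell_1} < \rho$, decompose each multi-index as $s = s' + s''$ along this split, use the factorisation
\[
\frac{|s|!}{s!} \ =\ \binom{|s|}{|s'|}\,\frac{|s'|!}{s'!}\,\frac{|s''|!}{s''!},
\]
and treat the two sub-multinomials separately---the heavy part on $J$ by the $p \ge 1$ argument applied to $c_J$, the light part on $J^c$ by the crude $\ell_p$-type bound.

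\emph{Main obstacle.} The delicate step is absorbing the extra factor $\binom{|s|}{|s'|}^p$ produced by the split without destroying the geometric decay coming from $\rho < 1$. This forces $N$ to be chosen so that the combined rate $\|c_J\|_{\ell_1}$ and the tail $\|c_{J^c}\|_{\ell_p}$ still dominate the $2^{p|s|}$ growth of the binomial coefficient; balancing these constants, and then summing the resulting double geometric series in $|s'|$ and $|s''|$, is where the combinatorial work of the proof concentrates.
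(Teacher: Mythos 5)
Your necessity arguments are sound in both ranges (testing on $s=e^j$ for the $\ell_p$-membership of $c$, subadditivity of $x\mapsto x^p$ against the multinomial identity for $p\le 1$, and the power-mean inequality on a finite truncation for $p>1$), and your sufficiency argument for $p>1$ with $\rho<1$ is correct. But the proof as written does not establish the lemma, for two concrete reasons. First, for $p>1$ the lemma asserts that $\rho\le 1$ suffices, and the boundary case $\rho=1$ is exactly the one needed in Section 5 of the paper (there $p(\alpha)=\tfrac{2}{1+2\alpha}>1$ whenever $m>2$, and the corollary only assumes $\|d\|_{\ell_1}\le 1$). Your bound $\sum_{|s|=n}a_s^p\le\rho^{n(p-1)}\sum_{|s|=n}a_s=\rho^{np}$ degenerates to $\sum_n 1$ at $\rho=1$; handling equality requires a genuinely different mechanism (one must show that the level sums $\sum_{|s|=n}a_s^p$ of the multinomial weights still decay summably even though they sum to $1$ at each level), and nothing in the proposal addresses it. Second, the sufficiency for $p\le 1$ --- the case invoked for the Taylor and collocation results with $p=\tfrac1{1+\alpha}$ --- is left as a plan rather than a proof. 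You correctly diagnose that the crude bound $\bigl(\tfrac{|s|!}{s!}\bigr)^p\le\tfrac{|s|!}{s!}$ only yields the stronger hypothesis $\|c\|_{\ell_p}<1$, and your proposed split $\NN=J\sqcup J^c$ with the factorisation $\tfrac{|s|!}{s!}=\binom{|s|}{|s'|}\tfrac{|s'|!}{s'!}\tfrac{|s''|!}{s''!}$ is the right starting point, but the step you flag as the ``main obstacle'' is a real one: absorbing $\binom{|s|}{|s'|}^p\le 2^{p|s|}$ into the geometric factors would require $\|c_J\|_{\ell_1}<1/2$ and $\|c_{J^c}\|_{\ell_p}<1/2$ simultaneously, and the first of these is unattainable by choosing $J$ whenever $\tfrac12\le\rho<1$ (since $\|c_J\|_{\ell_1}\uparrow\rho$ as $J$ grows). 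So the argument, as outlined, actually fails in part of the claimed range rather than merely being tedious to finish.

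For context: the paper itself supplies no proof of this lemma; it is imported verbatim, with the $p\le 1$ case cited to Theorem 7.2 of Cohen--DeVore--Schwab (2010) and the $p>1$ case to Theorem 5.2 of D\~ung--Griebel--Vu--Rieger (2017). The cited $p\le 1$ proof does proceed by a finite/tail splitting in the spirit of your sketch, but with a sharper treatment of the combinatorial factor than the uniform bound $2^{|s|}$; the $p>1$ case at $\rho=1$ is the substance of the second citation and is not a perturbation of the $\rho<1$ computation. To make your write-up a complete proof you would need to supply both of these missing arguments, or else restrict the statement to $\rho<1$ (for $p>1$) and to $\|c\|_{\ell_p}<1$ (for $p\le 1$), which would weaken the paper's corollaries.
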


This lemma was proved in \cite[Theorem 7.2]{CDS10} for $p \le 1$ and in \cite[Theorem 5.2]{DGVR17} for $p>1$.
From it and Lemma~\ref{lemma|partial^s-u|_W} we obtain

\begin{corollary}  
Let the function 
$a$  belong to $L_\infty(\IIi,\Wi)$. Assume that there exists $0 < p <1$ such that the sequence  
$\big(\|\psi_j\|_{\Wi}\big)_{j \in \NN}$  belong to $\ell_p(\NN)$ and that $\|b\|_{\ell_1(\NN)} \ < \ 1$.
Then there holds Assumption~$\operatorname{(ii)}$ for $p$, $M=K$ and the sequence
\begin{equation}  \nonumber
\sigma:= \big(\sigma_s\big)_{s \in \FF}, \quad
\sigma_s^{-1}
:= \
\frac{|s|!}{s!} b^s.
\end{equation}
\end{corollary}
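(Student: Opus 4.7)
The plan is to derive the corollary as an immediate composition of the two preceding lemmas: Lemma~\ref{lemma|partial^s-u|_W} supplies the pointwise bound on $\|\partial_y^s u\|_{L_\infty(\IIi,W)}$, and Lemma~\ref{ell_p-sumability} converts factorial/monomial bounds into $\ell_p(\FF)$-summability. Setting $M = K$ and $\sigma_s^{-1} = \frac{|s|!}{s!}\, b^s$ then yields Assumption~(ii). The only substantive check beyond invoking these two lemmas is that the coefficient sequence $b$ itself belongs to $\ell_p(\NN)$.

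First, I would bound $\|t_s\|_W$. Since $u(z)$ is $V$-valued analytic on $\AAid \supset \UUi$, its complex partial derivatives $\partial_z^s u$ at the origin $0_\FF \in \IIi$ agree with the iterated real partial derivatives $\partial_y^s u$ at the origin, so
\[
\|t_s\|_W \ = \ \frac{1}{s!}\,\|\partial_z^s u(0_\FF)\|_W \ \le \ \frac{1}{s!}\,\|\partial_y^s u\|_{L_\infty(\IIi,W)}.
\]
Lemma~\ref{lemma|partial^s-u|_W} then gives $\|t_s\|_W \le K\,\frac{|s|!}{s!}\,b^s = K\,\sigma_s^{-1}$, which is the required estimate with $M=K$.

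Second, I would establish $(\sigma_s^{-1})_{s \in \FF} \in \ell_p(\FF)$ by applying Lemma~\ref{ell_p-sumability} to $c = b$ in the regime $p < 1$. For this I need $\|b\|_{\ell_1(\NN)} < 1$ (given as a hypothesis) and $b \in \ell_p(\NN)$. From the definition~\eqref{def[b]} of $b_j$ and the decomposition $\|\psi_j\|_{\Wi} = \|\psi_j\|_{L_\infty(D)} + |\psi_j|_{\Wi}$, the elementary majorization
\[
b_j \ \le \ C_0\, \|\psi_j\|_{\Wi}, \qquad C_0 \ := \ \frac{1}{r}\,\max\!\Bigl(\frac{|a|_{L_\infty(\IIi,\Wi)}}{r} + 2,\ 1\Bigr),
\]
holds for every $j$. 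The assumption $\bigl(\|\psi_j\|_{\Wi}\bigr)_{j \in \NN} \in \ell_p(\NN)$ therefore propagates to $b$, so Lemma~\ref{ell_p-sumability} applies and delivers $\bigl(\tfrac{|s|!}{s!}\, b^s\bigr)_{s \in \FF} \in \ell_p(\FF)$.

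I do not anticipate any real obstacle: both principal ingredients are already proved in the paper, and the remainder is a two-line verification of hypotheses. The one spot that merits a brief comment rather than a calculation is the identification of the complex Taylor coefficient with the real partial derivative of $u$ at $0_\FF$, which is justified by the $V$-valued analyticity on $\AAid \supset \UUi$ established at the beginning of Section~\ref{Taylor approximation}.
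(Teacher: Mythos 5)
Your proposal is correct and follows essentially the same route as the paper, which derives the corollary directly from Lemma~\ref{ell_p-sumability} and Lemma~\ref{lemma|partial^s-u|_W} (compare the analogous corollary for the Legendre coefficients, where the bound $\|u_s\|_W \le \frac{1}{s!}\|\partial_y^s u\|_{L_\infty(\IIi,W)} \le K\frac{|s|!}{s!}b^s$ is written out explicitly). Your two supplementary checks --- identifying $t_s$ with $\frac{1}{s!}\partial_y^s u(0_\FF)$ via the analyticity of $z\mapsto u(z)$ on $\AAid\supset\UUi$, and the majorization $b_j \le C_0\|\psi_j\|_{\Wi}$ giving $b\in\ell_p(\NN)$ --- are exactly the details the paper leaves implicit, and both are sound.
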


\section{Collective collocation approximation}
\label{Collocation methods}

\subsection{Tensorisation}

Our collective collocation method of polynomial  interpolation is based on the approximation property in Assumption (i) and on the standard principle of tensorisation of  difference of successive one-dimensional interpolation operators introduced in \cite{CCS13}. Let us recall it as well some auxiliary results from there. 

Let $(\xi_j)_{j \in \ZZ_+}$ be a sequence of mutually distinct points in $\II$. Then the univariate Lagrange interpolation operator $I_k$ associated with the section $\{\xi_0,...,\xi_k\}$, is defined by
\begin{equation} \nonumber
I_kv
:= \
\sum_{j=0}^k v(\xi_j)\, \ell_j^k, \quad
\ell_j^k(y)
:= \
\prod_{\substack{i=0 \\ i \not= j}}^k \frac{y - \xi_i}{\xi_j - \xi_i}
\end{equation}
for a function $v$ defined on $\II$. For $s \in \ZZ_+$, let us introduce the difference operator
\begin{equation} \nonumber
\Delta_s
:= \
I_s - I_{s-1}
\end{equation}
with the convention $I_{-1}(v) = 0$. 
If $s \in \FF$, we define 
\begin{equation} \nonumber
\xi_s
:= \
(\xi_{s_j})_{j \in \ZZ_+} \in \IIi,
\end{equation}
and the tensor product difference operator
\begin{equation} \nonumber
\Delta_s
:= \
\bigotimes_{j \in \NN} \Delta_{s_j}.
\end{equation}
For a finite set $\Lambda \subset \FF$, we introduce the interpolation operator
\begin{equation} \nonumber
I_\Lambda
:= \
\sum_{s \in \Lambda} \Delta_s,
\end{equation}
the space of polynomials
\begin{equation} \nonumber
V_\Lambda
:= \
\operatorname{span} \{y^s: \, s \in \Lambda\}
\end{equation}
and the grid
\begin{equation} \nonumber
\Gamma_\Lambda
:= \
\{\xi_s: \, s \in \Lambda\}.
\end{equation}

A set $\Lambda \subset \FF$  is called lower if  $s \in \Lambda$, then $s-e^j \in \Lambda$ for every $j$ such that $s_j > 0$. 
 For every lower set $\Lambda$, the generalization of the interpolation operator $I_\Lambda$ to the $V$-valued setting is straightforward: $I_\Lambda u$ is the unique solution in $V_\Lambda$ that coincides with $u$ at the points $\xi_s$ for $s \in \Lambda$. 

Denote by $L_\infty(\IIi)$ the space of bounded complex-valued functions $v$ on $\IIi$ equipped with the sup norm 
\[
\|v\|_{L_\infty(\IIi)}:= \sup_{y \in \IIi} |v(y)|.
\]
Then the Lebesgue constant of the interpolation operator 
$I_\Lambda$ is defined as
\begin{equation} \nonumber
\Ll_\Lambda
:= \
\sup_{\|v\|_{L_\infty(\IIi)} \le 1} \|I_\Lambda v\|_{L_\infty(\IIi)}.
\end{equation}
We are interested in selecting sequences  $(\xi_j)_{j \in \ZZ_+}$ so that the univariate Lebesgue constants
\begin{equation} \nonumber
\lambda_k
:= \
\sup_{\|v\|_{L_\infty(\II)} \le 1} \|I_k v\|_{L_\infty(\II)},
\end{equation}
associated with the univariate operator $I_k$ are increasing moderately. (Note that $\lambda_0 = 0$ for any choice of  $(\xi_j)_{j \in \ZZ_+}$). Such a sequence is the projection of a Leja sequence on the complex disk $\UU$ to $\II$ which is defined inductively by fixing a point $z_0 \in \UU$ and defining
\begin{equation} \nonumber
z_k
:= \
\operatorname{Argmax}_{z \in \UU}\, \prod_{j=0}^{k-1} |z - z_j|.
\end{equation}
The following lemma has been proven in \cite{CCS13}.
\begin{lemma} \label{lemma[L-constant]}
Let the Lebesgue constants $\lambda_k$ satisfy
$\lambda_k \le (k + 1)^\theta$, $k \in \ZZ_+$, for some $\theta \ge 1$. Then 
$\Ll_\Lambda \le |\Lambda|^{\theta + 1}$ for every lower set $\Lambda$.
\end{lemma}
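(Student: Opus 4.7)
The plan is to proceed by induction on the cardinality $|\Lambda|$, exploiting the hierarchical structure of lower sets. For the base case $|\Lambda|=1$, necessarily $\Lambda=\{0_\FF\}$, so $I_\Lambda v = v(\xi_{0_\FF})$ is the constant function and $\Ll_\Lambda = 1 \le 1^{\theta+1}$.

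For the inductive step, given a lower set $\Lambda$ with $|\Lambda|\ge 2$, I would choose a maximal element $s^* \in \Lambda$ (which always exists since $\Lambda$ is finite) and set $\Lambda' := \Lambda \setminus \{s^*\}$, still a lower set. The key hierarchical identity is
\[
I_\Lambda v(y) - I_{\Lambda'} v(y) \ = \ \bigl(v(\xi_{s^*}) - I_{\Lambda'} v(\xi_{s^*})\bigr) \cdot B_{s^*}(y),
\]
where $B_{s^*}(y) := \prod_{j \in \NN} \ell_{s^*_j}^{s^*_j}(y_j)$ is the tensor product of univariate Lagrange polynomials. Two properties of $B_{s^*}$ must be checked. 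First, $B_{s^*}(\xi_{s^*}) = 1$, while $B_{s^*}(\xi_{s'}) = 0$ for every $s' \in \Lambda'$: maximality of $s^*$ precludes $s' \ge s^*$ componentwise, so for $s' \ne s^*$ there must be some $j$ with $s'_j < s^*_j$, and then $\ell_{s^*_j}^{s^*_j}(\xi_{s'_j}) = 0$. Second, $\|B_{s^*}\|_{L_\infty(\IIi)} \le \prod_j \lambda_{s^*_j} \le \prod_j (s^*_j + 1)^\theta \le |\Lambda|^\theta$, where the last step uses the elementary combinatorial fact that for $s$ in a lower set $\Lambda$ one has $\prod_j (s_j+1) \le |\Lambda|$ (since $\{t \in \FF : t \le s\} \subseteq \Lambda$ has cardinality exactly $\prod_j(s_j+1)$).

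The main obstacle is iterating the identity without accumulating multiplicative blow-up: the naive step-by-step bound $\Ll_\Lambda \le \Ll_{\Lambda'} + (1 + \Ll_{\Lambda'})\,|\Lambda|^\theta$ telescopes to something of order $|\Lambda|^{2\theta+1}$, off by a large power. The cure, following \cite{CCS13}, is to iterate the identity above (in a fixed order of removal of maximal elements) to construct the full hierarchical Lagrange basis $\{H_s^\Lambda\}_{s \in \Lambda} \subset V_\Lambda$ characterized by $H_s^\Lambda(\xi_{s'}) = \delta_{s,s'}$, so that $I_\Lambda v = \sum_{s \in \Lambda} v(\xi_s)\, H_s^\Lambda$ and $\Ll_\Lambda = \sup_y \sum_{s \in \Lambda} |H_s^\Lambda(y)|$. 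I would then prove by a simultaneous induction on $|\Lambda|$ the pointwise aggregate bound $\sum_{s \in \Lambda} |H_s^\Lambda(y)| \le |\Lambda|^{\theta+1}$. The key to closing that induction is to exploit the interpolation property $B_{s^*}(\xi_{s^*})=1$: when $s^*$ is added, each old basis function is modified only by subtracting a multiple of $B_{s^*}$ whose coefficient is an evaluation (not an $L_\infty$-norm), while the single new basis function $H_{s^*}^\Lambda$ contributes at most $|\Lambda|^\theta$. The cardinality factor $|\Lambda|$ then appears precisely once, from summing $|\Lambda|$ basis contributions, yielding $\Ll_\Lambda \le |\Lambda|^{\theta+1}$.
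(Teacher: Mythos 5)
There is a genuine gap at the heart of your inductive step, and it is exactly the difficulty you yourself flagged. (Note first that the paper offers no proof of this lemma; it cites \cite{CCS13}, so the comparison below is with that argument.) Your proposed cure --- passing to the Lagrange basis $\{H_s^\Lambda\}$ and arguing that the coefficient multiplying $B_{s^*}$ is ``an evaluation, not an $L_\infty$-norm'' --- does not escape the naive recursion. Writing out the update when $s^*$ is adjoined to $\Lambda'$ gives $H_s^\Lambda = H_s^{\Lambda'} - H_s^{\Lambda'}(\xi_{s^*})\, B_{s^*}$ for $s\in\Lambda'$ and $H_{s^*}^\Lambda=B_{s^*}$, hence
\[
\sum_{s\in\Lambda}|H_s^\Lambda(y)| \ \le \ \sum_{s\in\Lambda'}|H_s^{\Lambda'}(y)| \ + \ \Big(1+\sum_{s\in\Lambda'}\big|H_s^{\Lambda'}(\xi_{s^*})\big|\Big)\,\|B_{s^*}\|_{L_\infty(\IIi)}.
\]
The only available bound for $\sum_{s\in\Lambda'}|H_s^{\Lambda'}(\xi_{s^*})|$ is the Lebesgue function of $\Lambda'$ evaluated at the point $\xi_{s^*}$, which can only be estimated by $\Ll_{\Lambda'}$ again; that it is a point evaluation buys nothing. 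So your ``simultaneous induction'' collapses to precisely the recursion $\Ll_\Lambda\le\Ll_{\Lambda'}+(1+\Ll_{\Lambda'})|\Lambda|^\theta$ that you correctly rejected (and which in fact iterates to a super-polynomial bound of order $\prod_{k\le |\Lambda|}(1+k^\theta)$, far worse than $|\Lambda|^{2\theta+1}$). The closing sentence of your proposal, where the factor $|\Lambda|$ is claimed to ``appear precisely once,'' is the entire content of the lemma and is not justified. A further warning sign is that your argument never uses the hypothesis $\theta\ge 1$.

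The argument in \cite{CCS13} avoids element-by-element recursion altogether. One uses $I_\Lambda=\sum_{s\in\Lambda}\Delta_s$ together with the tensorized operator bound $\|\Delta_s\|\le\prod_j(\lambda_{s_j}+\lambda_{s_j-1})$ (with the convention $\lambda_{-1}=0$), then the elementary inequality $(1+k)^\theta+k^\theta\le(1+k)^{\theta+1}-k^{\theta+1}$, valid for all $k\ge0$ precisely because $\theta\ge1$, and finally the combinatorial fact that for any lower set $\Lambda$ and any $q\ge1$ one has $\sum_{s\in\Lambda}\prod_j\big((1+s_j)^q-s_j^q\big)\le|\Lambda|^q$ (proved by induction on lower sets, with telescoping equality on rectangles $\{s:\,s\le t\}$). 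Summing the operator bounds over $\Lambda$ then yields $\Ll_\Lambda\le|\Lambda|^{\theta+1}$ in one stroke. The pieces of your proposal that are correct --- the base case, the hierarchical identity $I_\Lambda v-I_{\Lambda'}v=\big(v(\xi_{s^*})-I_{\Lambda'}v(\xi_{s^*})\big)B_{s^*}$, the vanishing of $B_{s^*}$ at $\xi_{s'}$ for $s'\in\Lambda'$, and the bounds $\|B_{s^*}\|_{L_\infty(\IIi)}\le\prod_j\lambda_{s^*_j}$ and $\prod_j(1+s^*_j)\le|\Lambda|$ --- are all sound; what is missing is the aggregation step, and that is the whole point of the lemma.
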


Upper bounds of the form $\lambda_k \le (k + 1)^\theta$ can be derived for some $\theta > 0$ from the fact that 
$\lambda_k = \Oo(k^\gamma)$ for some $\gamma >0$. For the sequence $(\xi_j)_{j \in \ZZ_+}$ given by the projection of the  Leja sequence on the complex disk $\UU$ to $\II$ with $z_0=1$, it has been proven in \cite{Chk13} that
\[
\lambda_k
\ \le \
3(k+1)^2\, \log(k+1)
\ = \ 
\Oo(k^{2 + \varepsilon})
\]
for arbitrary fixed $\varepsilon > 0$.

For $k \in \ZZ_+$, we introduce the univariate polynomials $h_k$ of degree $k$ associated with the sequence 
$(\xi_j)_{j \in \ZZ_+}$ by
\begin{equation} \nonumber
h_0(y)
:= \ 1, \quad 
h_k(y)
:= \
\prod_{j=0}^{k-1} \frac{y - \xi_j}{\xi_k - \xi_j}, \ k \in \NN.
\end{equation}
If $s \in \FF$, we define 
the tensor product function
\begin{equation} \nonumber
h_s(y)
:= \
\prod_{j \in \NN} h_{s_j}(y_j).
\end{equation}

Let $(\Lambda_n)_{n \in \NN}$ be a nested sequence of sets with $n = |\Lambda_n|$. 
Then the grids $(\Gamma_{\Lambda_n})_{n \in \NN}$ are also nested. Note that each set $\Lambda_n$ can be seen as the section $\{s^1,...,s^n\}$ of a sequence  $(s^k)_{k \in \NN}$. This allows us to construct an algorithm for the computation $I_{\Lambda_n}v$ from $I_{\Lambda_{n-1}}v$. Namely, the polynomials $I_{\Lambda_n}v$ can be given by 
\begin{equation} \nonumber
I_{\Lambda_n}v
\ = \ 
\sum_{s \in \Lambda_n} v_s h_s
\ = \
\sum_{k=1}^n v_{s^k} h_{s^k},
\end{equation}
where $v_{s^k}$ are defined recursively by
\begin{equation} \nonumber
\begin{split}
v_{s^1} 
&:= \
v(\xi_0)
\\ 
v_{s^{k+1}}
&:= \
v(\xi_{s^{k+1}}) - I_{\Lambda_k}v(\xi_{s^{k+1}})
\ = \
v(\xi_{s^{k+1}}) - \sum_{j=1}^k v_{s^j}\, h_{s^j}(\xi_{s^{k+1}}).
\end{split}
\end{equation}

\subsection{An estimate of Taylor coefficients}

Following \cite{CDS11}, for $\rho:= (\rho_j)_{j \in \NN}$ be a sequence of positive numbers and $\delta, B > 0$, we say that $\rho$ is 
$(\delta,B)$-admissible, if 
\begin{equation} \label{dB-admit1}
\sum_{j=1}^\infty  \rho_j\, |\psi_j(x)|
 \ \le \ 
\Re[\overline{a}(x)] 
\ - \ \delta,  \quad x \in D,
\end{equation}
and 
\begin{equation}  \label{dB-admit2}
\sum_{j=1}^\infty  \rho_j\, \max_{1 \le i \le m}|\partial_{x_i}\psi_j(x)|
 \ \le \ 
B \ - \ | \overline{a}|_{\Wi},  \quad x \in D.
\end{equation}

For a proof of the following lemma, see  \cite[Lemma 5.4]{CDS11}. 
\begin{lemma} \label{lemma[t_s]-W}
Let  $\rho:= (\rho_j)_{j \in \NN}$ be 
$(\delta,B)$-admissible for $0 < \delta < r$ and sufficiently large $B$. Then we have
\begin{equation} \nonumber
\|t_s\|_W 
\ \le \
C_{\delta,B} \, \rho^{-s}, \quad s \in \FF. 
\end{equation}
\end{lemma}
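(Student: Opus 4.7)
The plan is to use the observation that the closed polydisc
$\UUir := \{z \in \CCi : |z_j| \le \rho_j, \ j \in \NN\}$ is contained in $\AAidB$ whenever $\rho$ is $(\delta,B)$-admissible, and then to estimate the Taylor coefficients via a Cauchy integral formula in the finite number of variables on which $t_s$ depends.

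First I would verify the inclusion $\UUir \subset \AAidB$. For any $z \in \UUir$ and $x \in D$, the affine expansion \eqref{AFF-exp} together with $|z_j| \le \rho_j$ gives
\begin{equation*}
\Re[a(x,z)] \ \ge \ \Re[\overline{a}(x)] - \sum_{j=1}^\infty \rho_j |\psi_j(x)| \ \ge \ \delta
\end{equation*}
by \eqref{dB-admit1}, and $|a(x,z)| \le R + \sum_j \rho_j\|\psi_j\|_{\LiD} \le 2R$ for sufficiently large $B$ (equivalently, small $\rho$). Differentiating \eqref{AFF-exp} in $x_i$ and taking $L_\infty$-norm,
\begin{equation*}
|a(z)|_{\Wi} \ \le \ |\overline{a}|_{\Wi} + \sum_{j=1}^\infty \rho_j \max_{1\le i \le m} \|\partial_{x_i}\psi_j\|_{\LiD} \ \le \ B
\end{equation*}
by \eqref{dB-admit2}. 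Hence $z \in \AAidB$, so $u(z) \in W$ with $\|u(z)\|_W \le C_{\delta,B}$ uniformly on $\UUir$.

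Next, fix $s \in \FF$ with $\supp(s) \subset \{1,\ldots,J\}$. From \cite{CDS11} the map $z \mapsto u(z) \in V$ is holomorphic in each variable on $\AAid$; combined with the $W$-boundedness on $\UUir$, an argument analogous to \eqref{PDE[u_W<]} upgrades this to $W$-valued holomorphy in each of $z_1,\ldots,z_J$ separately, the remaining coordinates being frozen at $0$. I would then apply the Cauchy integral formula iteratively in these $J$ variables on the torus $\{|z_j| = \rho_j\}_{j=1}^J$: the coefficient $t_s = \frac{1}{s!}\partial_z^s u(0)$ equals
\begin{equation*}
t_s \ = \ \frac{1}{(2\pi i)^J} \oint_{|z_1|=\rho_1} \!\!\cdots \oint_{|z_J|=\rho_J} \frac{u(z_1,\ldots,z_J,0,0,\ldots)}{z_1^{s_1+1}\cdots z_J^{s_J+1}} \, dz_1\cdots dz_J,
\end{equation*}
where the integrand is a $W$-valued continuous function. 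Taking $\|\cdot\|_W$ inside the integral (Bochner) and using $\|u(z)\|_W \le C_{\delta,B}$ gives
\begin{equation*}
\|t_s\|_W \ \le \ \frac{C_{\delta,B}}{(2\pi)^J} \prod_{j=1}^J \frac{2\pi \rho_j}{\rho_j^{s_j+1}} \ = \ C_{\delta,B}\, \rho^{-s},
\end{equation*}
which is the claimed bound (with the convention $\rho_j^0 = 1$ for $j \notin \supp(s)$).

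The main obstacle, and where I would be most careful, is to make rigorous the step of invoking the Cauchy formula with values in $W$ rather than $V$: one must check that the $W$-valued analyticity (established above by combining \eqref{PDE[u_W<]} with the $V$-valued complex differentiability of $u(z)$ from \cite[Lemma 2.2]{CDS11}) is strong enough to justify swapping the integral with differentiation and with the norm. Once that vector-valued holomorphy is in place, the estimate itself is a one-line consequence of the product structure of the polydisc.
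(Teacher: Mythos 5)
Your proposal is correct and follows essentially the same route as the proof the paper points to (\cite[Lemma 5.4]{CDS11}): one checks that the polydisc $\{z\in\CCi:\,|z_j|\le\rho_j\}$ lies in $\AAidB$, invokes the uniform bound $\|u(z)\|_W\le C_{\delta,B}$ there, and applies the Cauchy integral formula on the product of circles $|z_j|=\rho_j$ over the finitely many active variables, with the vector-valued holomorphy issue you flag handled exactly as in that reference. One small correction: the bound $|a(x,z)|\le 2R$ already follows from \eqref{dB-admit1}, since $\sum_{j}\rho_j|\psi_j(x)|\le\Re[\overline{a}(x)]-\delta\le R$, and does not require taking $B$ large or $\rho$ small.
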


 Assume that the sequence 
$\big(\|\psi_j\|_{\Wi}\big)_{j \in \NN}$ belongs to $\ell_1(\NN)$. For a given number $q > 1$, let us give an estimate for $p_s\|t_s\|_W$, $s \in \FF$, where
\[
p_s
:= \
\prod_{j \in \NN} (s_j + 1)^q.
\]
By the assumptions we may choose $\lambda > 1$ and $j_0$  so that
\begin{equation}  \nonumber
(\lambda - 1)\sum_{j \in E} \|\psi_j\|_{\Wi}
\ \le \ 
\frac{r}{6}, \quad \sum_{j > j_0} \|\psi_j\|_{\Wi}
\ \le \ 
\frac{r}{12e^q}.
\end{equation}
We split $\NN$ into the two sets $E:=\{1,...,j_0\}$ and $F:= \{j_0,j_0 + 1,...\}$, and for each $s \in \FF$ define the sequence $\rho = \rho(s)$ by
\begin{equation}  \nonumber
\rho_j
:= \
\begin{cases}
\lambda, \ & j \in E, \\[0.5ex]
1, \ & j \in F, \ s_j = 0, \\[0.5ex]
e^q + \frac{rs_j}{4|s_F|\|\psi_j\|_{\Wi}},\ & j \in F, \ s_j \not= 0.
\end{cases}
\end{equation}
Let us show that $\rho$ is $(r/2,B)$-admissible, where 
\begin{equation} \nonumber
B
:= \
\|\bar{a}\|_{\Wi} + \sum_{j \in E} \|\psi_j\|_{\Wi} + \frac{r}{2}.
\end{equation}
We verify, for instance the condition \eqref{dB-admit2}, the condition \eqref{dB-admit1} can be verified in a similar way. Indeed, we have for every $x \in D$,
\begin{equation}  \nonumber
\begin{split}
\sum_{j=1}^\infty  \rho_j\, \max_{1 \le i \le m}|\partial_{x_i}\psi_j(x)| \ + \ |\overline{a}|_{\Wi}
 \ &\le \ 
\lambda \sum_{j \in E} |\psi_j|_{\Wi} \ + \ e^q \sum_{j \in F} |\psi_j|_{\Wi} \\
 \ &+ \ 
\frac{r}{4}\sum_{j \in F} \frac{s_j}{|s_F|\|\psi_j\|_{\Wi}}|\psi_j|_{\Wi} \ + \ |\overline{a}|_{\Wi} \\
 \ &\le \ 
\sum_{j \in E} \|\psi_j\|_{\Wi} \ + \ \frac{r}{6} \ + \ \frac{r}{12} \ + \ \frac{r}{4}\ + \ \|\overline{a}\|_{\Wi}
\ = \ B.
\end{split}
\end{equation}
By applying Lemma \ref{lemma[t_s]-W} we have that
\begin{equation}  \nonumber
\|t_s\|_W
\ \le \ 
C_{r/2,B} \, \rho^{-s}, \quad s \in \FF.
\end{equation}
Hence, we derive that
\begin{equation}  \label{ineq[t_s]2}
p_s\|t_s\|_W
\ \le \ C_{j_0,\lambda,q} \, C_{r/2,B} \, \sigma_s^{-1}, \quad s \in \FF,
\end{equation}
where
\begin{equation}  \label{sigma[taylor]2}
\sigma_s
:= \
\left(\prod_{j \in E}\left(\frac{2\lambda}{\lambda+1}\right)^{s_j}\right)
\left(\prod_{j \in F}\rho_j^{s_j}(s_j + 1)^q\right).
\end{equation}
In a way similar to (4.23)--(4.26) in the proof of \cite[Theorem 4.3]{CCS13} we can prove the estimate 
\begin{equation}  \nonumber
 \sigma_s^{-1}
\ \le \ 
 \tilde{\sigma}_s^{-1}, \quad s \in \FF,
\end{equation}
where
\begin{equation}  \nonumber
\tilde{\sigma}_s
:= \
\left(\prod_{j \in E}\left(\frac{2\lambda}{\lambda+1}\right)^{s_j}\right)
\left(\prod_{j \in F}\left(\frac{|s_F|d_j}{s_j}\right)^{-s_j}\right),
\quad s \in \FF,
\end{equation}
and
\begin{equation}  \nonumber
d_j
:= \
\frac{4e^q}{r}\|\psi_j\|_{\Wi}.
\end{equation}
By the construction we have also that
\begin{equation}  \nonumber
\sum_{j \in F} d_j
\ \le \ 
\frac{1}{3}.
\end{equation}
Hence, by \cite[Lemma 7.1]{CDS10} we can conclude that 
\begin{equation}  \label{ell_p-summability}
\big(\|\psi_j\|_{\Wi}\big)_{j \in \NN} \in \ell_p(\NN), \ 0 < p \le 1, \
\Longrightarrow \
\big(\tilde{\sigma}_s^{-1}\big)_{s \in \FF}, \,\big(\sigma_s^{-1}\big)_{s \in \FF} \in \ell_p(\FF).
\end{equation}

\subsection{Linear collective collocation approximation}

For a finite lower subset $G$ in $ \ZZ_+ \times \FF$, we define the linear operator
\[
\Ii_G 
:= \
\sum_{(k,s) \in G} \delta_k \Delta_s.
\]
which is a mapping from $L_\infty(\IIi,V)$ to the subspace $\Vv^{{\rm T}}(G)$.
We want to approximate $u(y)$ by $\Ii_{G(T)}u(y)$ in the norm of $L_\infty(\IIi,V)$. 

\begin{theorem} \label{theorem[I-approximation]}
Let Assumption $\operatorname{(i)}$ hold.  Assume that there exists $0 < p < 1$ such that the sequence 
$\big(\|\psi_j\|_{\Wi}\big)_{j \in \NN}$ belong to $\ell_p(\NN)$.
Let the sequence  $(\xi_j)_{j \in \ZZ_+}$ be chosen so that $\lambda_j \le (j + 1)^{q-1}$ for some 
$q > 1$.
Let $\sigma:= \big(\sigma_s\big)_{s \in \FF}$ be the sequence defined by \eqref{sigma[taylor]2}.
For $T > 0$, consider the set $G(T) = G_{p,\sigma}(T)$ as in \eqref{def[G(T)]}.  
Then we have for every $T > 0$,
\begin{equation}  \nonumber
\Big\|u - \Ii_{G(T)} u\Big\|_{L_\infty(\IIi,V)}
\ \le \ 
C\, T^{- \min(1/p -1,\alpha)},
\end{equation}
where 
\[
C:= \ \big(C_D(2^\alpha + 1) + C_{j_0,\lambda,q}\big)\,
\frac{ C_{r/2,B}}{2^{\alpha^*} - 1} \, \big\|\big(\sigma_s^{-1}\big)\big\|_{\ell_p(\FF)}^p.
\]
\end{theorem}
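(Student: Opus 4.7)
The plan is to parallel the proof of Theorem \ref{theorem[T-approximation](1)} for the Taylor case, with the interpolation operator $\Ii_G$ playing the role previously played by the partial-sum operator $\Ss^{{\rm T}}_G$. The Taylor coefficients $t_s$ will be replaced by the hierarchical surpluses produced by the operators $\Delta_s$, and the Lebesgue constants generated by the tensor-product difference operators $\Delta_s$ will be absorbed by the factor $p_s=\prod_j(s_j+1)^q$ that was built into the weight sequence $\sigma$ in \eqref{sigma[taylor]2} precisely for this purpose.

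First I would establish a collective expansion
\[
u(y)(x) \ = \ \sum_{(k,s)\in\ZZ_+\times\FF} \delta_k(\Delta_s u)(x,y),
\]
converging unconditionally in $L_\infty(\IIi,V)$. The parametric decomposition $u(y)=\sum_{s\in\FF}\Delta_s u(y)$, unconditionally convergent in $L_\infty(\IIi,V)$, is obtained from the Taylor representation in Lemma \ref{lemma[t-convergence]} together with the monotonicity of $\sigma_s$ (and hence the lower-set character of all sublevel sets of $\sigma$), using the hierarchical factorisation $\Delta_s u(y)=\alpha_s(u)\,h_s(y)$ with $\alpha_s(u):=\Delta_s u(\xi_s)\in W$. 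Inserting the spatial telescoping $\alpha_s(u)=\sum_k\delta_k(\alpha_s(u))$, valid in $V$ by Assumption~(i), and interchanging sums as in Lemma \ref{lemma[Uncond-convergence]} yields the collective form above. The summability condition $(\|t_s\|_W)_{s\in\FF}\in\ell_1(\FF)$ that justifies the interchange follows from \eqref{ineq[t_s]2} combined with \eqref{ell_p-summability}.

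The heart of the argument is the uniform termwise estimate
\[
\|\Delta_s u\|_{L_\infty(\IIi,W)} \ \le \ C_{j_0,\lambda,q}\,C_{r/2,B}\,\sigma_s^{-1}, \qquad s\in\FF.
\]
Together with Assumption~(i), this gives $\|\delta_k(\Delta_s u)\|_{L_\infty(\IIi,V)}\le C'\,2^{-\alpha k}\sigma_s^{-1}$ with $C'=(2^\alpha+1)C_D\,C_{j_0,\lambda,q}\,C_{r/2,B}$, so the tail of the collective expansion outside $G(T)$ obeys
\[
\|u-\Ii_{G(T)}u\|_{L_\infty(\IIi,V)} \ \le \ \sum_{(k,s)\notin G(T)} \|\delta_k(\Delta_s u)\|_{L_\infty(\IIi,V)} \ \le \ C'\sum_{(k,s)\notin G(T)} 2^{-\alpha k}\sigma_s^{-1}.
\]
A direct application of Lemma \ref{lemma[sum-notin]} then delivers the claimed rate $T^{-\min(1/p-1,\alpha)}$ with the constant stated in the theorem.

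The main obstacle is the displayed termwise bound on $\|\Delta_s u\|_{L_\infty(\IIi,W)}$. It is proved by expressing $\alpha_s(u)$ as a linear combination of Taylor coefficients $t_\nu$ with $\nu\ge s$ componentwise. The operator norm of $\Delta_s$ on such $W$-valued polynomial pieces picks up a product factor of order $\prod_j(s_j+1)^{q-1}$ from the univariate Lebesgue bound $\lambda_k\le (k+1)^{q-1}$, and the suprema $\|h_s\|_{L_\infty(\IIi)}$ contribute an additional factor of order $\prod_j(s_j+1)$. Together these account for exactly the weight $p_s=\prod_j(s_j+1)^q$, which is precisely cancelled by the $p_s$-weighted Taylor bound \eqref{ineq[t_s]2}; this is the CCS13 mechanism behind the design of \eqref{sigma[taylor]2}. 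Once this uniform-in-$y$ estimate is in place, everything else reduces to the same two-step scheme used in the Taylor case of Theorem \ref{theorem[T-approximation](1)}.
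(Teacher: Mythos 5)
Your overall architecture is genuinely different from the paper's. The paper never attempts a termwise bound on the hierarchical surpluses $\Delta_s u$. Instead it writes $\Ii_{G(T)}u=\sum_{k\le k^*}\delta_k I_{\Lambda_k}u$ with $\Lambda_k=\{s:(k,s)\in G(T)\}$, uses the exactness of $I_{\Lambda_k}$ on $V_{\Lambda_k}$ and the identity $I_{\Lambda_k}y^s=I_{\Lambda_k\cap R_s}y^s$ to obtain
\begin{equation*}
\Ii_{G(T)}u \ = \ \Ss^{{\rm T}}_{G(T)}u \ + \ \sum_{(k,s)\notin G(T)}\delta_k(t_s)\,I_{\Lambda_k\cap R_s}y^s ,
\end{equation*}
and then bounds the correction term by $\Ll_{\Lambda_k\cap R_s}\le|R_s|^q=p_s$ (Lemma \ref{lemma[L-constant]}) together with the single-index estimate $p_s\|t_s\|_W\le C\sigma_s^{-1}$ of \eqref{ineq[t_s]2}; the first term is handled by invoking Theorem \ref{theorem[T-approximation](1)}. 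This only ever requires controlling one Taylor coefficient at a time, which is exactly what \eqref{ineq[t_s]2} provides.

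The gap in your version is the ``heart of the argument'': the claimed uniform bound $\|\Delta_s u\|_{L_\infty(\IIi,W)}\le C_{j_0,\lambda,q}C_{r/2,B}\,\sigma_s^{-1}$. Since $\Delta_s$ annihilates $y^\nu$ unless $\nu\ge s$ componentwise, the surplus aggregates the entire tail of the Taylor expansion: $\Delta_s u=\sum_{\nu\ge s}t_\nu\,\Delta_s(y^\nu)$. The Lebesgue-type factor you extract is therefore multiplied not by $\|t_s\|_W$ but by $\sum_{\nu\ge s}\|t_\nu\|_W$, so the asserted ``precise cancellation'' of $p_s$ against \eqref{ineq[t_s]2} does not close the estimate; what is actually needed is something like $\sum_{\nu\ge s}\|t_\nu\|_W\lesssim p_s^{-1}\sigma_s^{-1}$, i.e.\ a geometric-decay argument for the tail based on the $s$-dependent admissible weights $\rho(s)$ (this is the nontrivial part of the corresponding CCS13 computation, and it is entirely absent from your sketch). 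Until that tail estimate is supplied, the termwise bound — and hence the whole proof — is not established. A secondary, more cosmetic issue: even if repaired, your constants will not match the stated $C$, which in the paper arises from the specific split into the Taylor error (contributing $C_D(2^\alpha+1)$) and the Lebesgue-constant correction (contributing $C_{j_0,\lambda,q}$). If you want to keep your route, prove the tail estimate explicitly; otherwise the paper's comparison with $\Ss^{{\rm T}}_{G(T)}u$ is the shorter path, precisely because it sidesteps the surplus bound.
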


\begin{proof}
Let $T > 0$ be given. For $k \in \ZZ_+$, put
\begin{equation} \label{[Lambda_k(T)]}
\Lambda_k:= \{s \in \FF: \,  (k,s) \in G(T) \}
\ = \ 
\{s \in \FF: \sigma_s^\beta \le 2^{- k}T\}.
\end{equation}
Observe that $\Lambda_k = \emptyset$ for all $k > k^*:= \lfloor \log_2 T \rfloor$, and consequently, 
we have that
\begin{equation}  \label{Eq[Ii]}
\Ii_{G(T)} u
\ = \ 
\sum_{k=0}^{k^*} \delta_k \Big(\sum_{s \in \Lambda_k} \Delta_s \Big)u
\ = \ 
\sum_{k=0}^{k^*} \delta_k I_{\Lambda_k}u.
\end{equation}
Moreover, by the construction $\big(\sigma_s\big)_{s \in \FF}$ is an increasing sequence and, consequently,   $\Lambda_k$ are lower sets. This yields that the sequence $\big\{\Lambda_k\big\}_{k=0}^{k^*}$ is nested in the inverse order, i.e., $\Lambda_{k'} \subset \Lambda_k$ if $k' > k$, and 
$\Lambda_0$ is the largest and $\Lambda_{k^*} = \{0_\FF\}$.
Observe that $I_{\Lambda_k} y^s = y^s$ for every $s \in \Lambda_k$ and
$\Delta_s y^{s'} = 0$ for every $s \not\le s'$.  By \eqref{ell_p-summability} and Lemma \ref{lemma[t-convergence]} the Taylor series unconditionally converges. Hence, we can write
\begin{equation}\nonumber
I_{\Lambda_k}u(y)
\ = \
I_{\Lambda_k}\Big(\sum_{s \in \FF} t_s \,y^s \Big)
\ =  \
\sum_{s \in \FF} t_s \,I_{\Lambda_k} y^s
\ =  \
\sum_{s \in \Lambda_k}  t_s \, y^s
\ + \ \sum_{s \not\in \Lambda_k} t_s \, I_{\Lambda_k \cap R_s}\, y^s.
\end{equation}
Therefore, from  \eqref{Eq[Ii]} we derive that
\begin{equation}\nonumber
\begin{split}
\Ii_{G(T)} u(y)
\ &= \
\sum_{k=0}^{k^*}  \sum_{s \in \Lambda_k} \delta_k(t_s) \, y^s
\ + \ 
\sum_{k=0}^{k^*}  \sum_{s \not\in \Lambda_k} \delta_k(t_s) \, I_{\Lambda_k \cap R_s}\, y^s
\\[1.5ex]
\ &= \
\Ss^{{\rm T}}_{G(T)} u(y)
\ + \ 
\sum_{(k,s) \not\in G(T)} \delta_k(t_s) \, I_{\Lambda_k \cap R_s}\, y^s.
\end{split}
\end{equation}
This together with \eqref{[T-series]} implies that
\begin{equation}\nonumber
u(y) \ - \ \Ii_{G(T)} u(y)
\ = \
u(y) \ - \ \Ss^{{\rm T}}_{G(T)} u(y)
\ - \ 
\sum_{(k,s) \not\in G(T)} \delta_k(t_s) \, 
 I_{\Lambda_k \cap R_s}\, y^s.
\end{equation}
Hence, we have 
\begin{equation} \label{[|u-Iu|<]1}
\big\|u - \Ii_{G(T)} u\big\|_{L_\infty(\IIi,V)}
\ \le \
\big\|u - \Ss^{{\rm T}}_{G(T)} u\big\|_{L_\infty(\IIi,V)} 
 +  
\sum_{(k,s) \not\in G(T)} \|\delta_k(t_s)\|_V \, 
\big\|I_{\Lambda_k \cap R_s}\, y^s\big\|_{L_\infty(\IIi)}.
\end{equation}
From \eqref{ineq[t_s]2} and \eqref{ell_p-summability} it follows that there holds Assumption~(ii) for the number $p$,  the sequence $\sigma:= \big(\sigma_s\big)_{s \in \FF}$ defined in \eqref{sigma[taylor]2} and 
$M = C_{j_0,\lambda,q} \, C_{r/2,B}$. Therefore, 
by Theorem  \ref{theorem[T-approximation](1)} we obtain for every $T > 0$,
\begin{equation}  \label{ineq[u(y) - Ss_{G(T)} u(y)]}
\Big\|u(y) - \Ss^{{\rm T}}_{G(T)} u(y)\Big\|_{L_\infty(\IIi,V)}
\ \le \ 
C'\, T^{- \min(1/p -1,\alpha)},
\end{equation}
where
\begin{equation}  \nonumber
C' := \  
C_{j_0,\lambda,q} \, C_{r/2,B}\, C_D \,
\frac{2^\alpha + 1}{2^{\alpha^*} - 1} \, \big\|\big(\sigma_s^{-1}\big)\big\|_{\ell_p(\FF)}^p.
\end{equation}
For the second sum in \eqref{[|u-Iu|<]1} we have the estimate
\begin{equation} \label{[|u-Iu|<]}
 \sum_{(k,s) \not\in G(T)} \|\delta_k(t_s)\|_V \, \big\|I_{\Lambda_k \cap R_s}\, y^s\big\|_{L_\infty(\IIi)}
\ \le \
 \sum_{(k,s) \not\in G(T)} 2^{-\alpha k} \|t_s\|_W \, 
\Ll_{\Lambda_k \cap R_s}.
\end{equation}
Lemma \ref{lemma[L-constant]} yields that for every $s \in \FF$,
\begin{equation}\nonumber
\Ll_{\Lambda_k \cap R_s}
\ \le \
|\Lambda_k \cap R_s|^q
\ \le \
|R_s|^q
\ = \
\prod_{j \in \NN} (1 + s_j)^q 
\ = \ p_s
\end{equation}
which together with \eqref{ineq[t_s]2} and  \eqref{[|u-Iu|<]} gives 
\begin{equation} \label{[sum_{(k,s) not-in G(T)}]}
\begin{split}
 \sum_{(k,s) \not\in G(T)}  \|\delta_k(t_s)\|_V \, \big\|I_{\Lambda_k \cap R_s}\, y^s\big\|_{L_\infty(\IIi)}
\ &\le \
 \sum_{(k,s) \not\in G(T)} 2^{-\alpha k} p_s\|t_s\|_W
\\[1ex]  
\ &\le \
C_{j_0,\lambda,q}\,C_{r/2,B} \,\sum_{(k,s) \not\in G(T)} 
2^{-\alpha k} \, \sigma_s^{-1}.
\end{split}
\end{equation}
From Assumption (ii) we know that $\big(\sigma_s^{-1}\big)_{s \in \FF}$ belongs to $\ell_1(\FF)$. Hence, by applying Lemma  \ref{lemma[sum-notin]}  
to the sum in the right-hand side of \eqref{[sum_{(k,s) not-in G(T)}]} we obtain
\begin{equation}\nonumber
 \sum_{(k,s) \not\in G(T)}  \|\delta_k(t_s)\|_V \, \big\|I_{\Lambda_k \cap R_s}\, y^s\big\|_{L_\infty(\IIi)}
\ \le \
C_{j_0,\lambda,q}\,C_{r/2,B} \, 
\frac{1}{2^{\alpha^*} - 1} \, \big\|\big(\sigma_s^{-1}\big)\big\|_{\ell_p(\FF)}^p\, T^{- \min(1/p -1,\alpha)}.
\end{equation}
Combining the last estimate, \eqref{[|u-Iu|<]1} and \eqref{ineq[u(y) - Ss_{G(T)} u(y)]} proves the theorem.
\end{proof}

Similarly to the proof of Theorem \ref{theorem[Galerkin-approximation]},
from Theorem \ref{theorem[I-approximation]} and Lemma \ref{lemma[sumG(T)<]} we derive the following
 
\begin{theorem} 
Let  the assumptions and notation of Theorem \ref{theorem[I-approximation]} hold.
For any $n \in \NN$, let $T_n$ be the number defined as in \eqref{[T_n]}
and put $\Vv^{{\rm T}}_n:= \Vv^{{\rm T}}\big(G(T_n)\big)$, \  $\Ii_n:= \Ii_{G(T_n)}$. Then there holds the following.
\begin{itemize}
\item 
$\big\{\Vv^{{\rm T}}_n\big\}_{n \in \ZZ_+}$ is a nested sequence of subspaces in 
$L_\infty(\IIi,V)$ and $\dim \Vv^{{\rm T}}_n \le n$;

\item
$\big\{\Ii_n\big\}_{n \in \ZZ_+}$ is a sequence of linear bounded operators
from $L_\infty(\IIi,V)$ into $\Vv^{{\rm T}}_n$;  and

\item
For every $n \in \NN$, 
\begin{equation}  \nonumber
\big\|u - \Ii_n u\big\|_{L_\infty(\IIi,V)}
\ \le \ 
C\, n^{- \min(1/p -1,\alpha)},
\end{equation}
with the same $\alpha$ as in the convergence rate  of the approximation 
in Assumption $\operatorname{(i)}$,
where
\begin{equation}  \nonumber
C:= \ 
4^\alpha \,\big(C_D(2^\alpha + 1) + C_{j_0,\lambda,q}\big)\,
\frac{ C_{r/2,B}}{2^{\alpha^*} - 1} \,  \big\|\big(\sigma_s^{-1}\big)\big\|_{\ell_p(\FF)}^{p\alpha}.
\end{equation}
Moreover, if in addition,  $p = \frac{1}{1+\alpha}$, then we have that
\begin{equation}   \label{[Pp_n-approximation](1)}
\big\|u - \Ii_n u\big\|_{L_\infty(\IIi,V)}
\ \le \ 
C\, n^{- \alpha},
\end{equation}
\end{itemize}
\end{theorem}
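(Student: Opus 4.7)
The plan is to mirror the derivation of Theorem \ref{theorem[Galerkin-approximation]} from Theorem \ref{theorem[T-approximation](1)} and Lemma \ref{lemma[sumG(T)<]}: the operator $\Ii_n = \Ii_{G(T_n)}$ maps into the same subspace family $\Vv^{{\rm T}}(G(T_n))$ as the Taylor operator $\Pp_n$, so the nesting and dimension arguments transfer verbatim, and only the error estimate needs to be drawn from Theorem \ref{theorem[I-approximation]} rather than Theorem \ref{theorem[T-approximation](1)}.

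First I would dispose of the structural claims. The defining inequalities \eqref{[T_n]} make $T_n$ non-decreasing in $n$, so $G(T_n) \subset G(T_{n+1})$ and consequently $\Vv^{{\rm T}}(G(T_n)) \subset \Vv^{{\rm T}}(G(T_{n+1}))$; linearity and boundedness of $\Ii_n$ are immediate since it is a finite combination of bounded linear operators $\delta_k\Delta_s$. For the dimension bound, Assumption~(i) gives $\dim V_{2^k}\le 2^k$, hence
\[
\dim \Vv^{{\rm T}}(G(T_n)) \ \le \ \sum_{(k,s)\in G(T_n)} \dim V_{2^k} \ \le \ \sum_{(k,s)\in G(T_n)} 2^k,
\]
and Lemma \ref{lemma[sumG(T)<]} bounds the right side by $2\big\|\big(\sigma_s^{-1}\big)\big\|_{\ell_p(\FF)}^{p}\,T_n$, which is at most $n$ by the left inequality in \eqref{[T_n]}.

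Second, I would derive the error bound. Theorem \ref{theorem[I-approximation]} applied at $T=T_n$ gives
\[
\|u - \Ii_n u\|_{L_\infty(\IIi,V)} \ \le \ C'\, T_n^{-\min(1/p-1,\alpha)},
\]
with $C' = \big(C_D(2^\alpha+1)+C_{j_0,\lambda,q}\big)\,C_{r/2,B}\,\big\|\big(\sigma_s^{-1}\big)\big\|_{\ell_p(\FF)}^p / (2^{\alpha^*}-1)$. The right inequality in \eqref{[T_n]} gives $T_n^{-1}\le 4\,\big\|\big(\sigma_s^{-1}\big)\big\|_{\ell_p(\FF)}^p\,n^{-1}$; raising to the power $\min(1/p-1,\alpha)\le \alpha\le 1$ produces
\[
T_n^{-\min(1/p-1,\alpha)} \ \le \ 4^\alpha\,\big\|\big(\sigma_s^{-1}\big)\big\|_{\ell_p(\FF)}^{p\alpha}\,n^{-\min(1/p-1,\alpha)},
\]
and combining this with $C'$ yields the stated constant $C$. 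The special case $p=1/(1+\alpha)$ gives $1/p-1=\alpha$, so $\min(1/p-1,\alpha)=\alpha$ and the rate becomes $n^{-\alpha}$.

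I do not anticipate any real obstacle, since the theorem is essentially a corollary of Theorem \ref{theorem[I-approximation]} together with Lemma \ref{lemma[sumG(T)<]} and the specific calibration of $T_n$ in \eqref{[T_n]}. The only mildly delicate point is collapsing both of the exponents appearing on $\big\|\big(\sigma_s^{-1}\big)\big\|_{\ell_p(\FF)}$ into the clean power $p\alpha$ used in the stated constant; this relies on $\min(1/p-1,\alpha)\le \alpha$ together with the harmless normalisation $\big\|\big(\sigma_s^{-1}\big)\big\|_{\ell_p(\FF)}\ge 1$ (which holds in the relevant setting since $\sigma_{0_\FF}=1$).
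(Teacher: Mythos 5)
Your proposal is correct and follows exactly the route the paper intends: the paper's own ``proof'' of this theorem is simply the remark that it is derived from Theorem \ref{theorem[I-approximation]} and Lemma \ref{lemma[sumG(T)<]} in the same way that Theorem \ref{theorem[Galerkin-approximation]} is derived from Theorem \ref{theorem[T-approximation](1)}, which is precisely the argument you spell out (dimension count via Lemma \ref{lemma[sumG(T)<]} and the left inequality of \eqref{[T_n]}, error bound via the right inequality). The only caveat is bookkeeping of the constant: multiplying $C'$ by $4^\alpha\big\|\big(\sigma_s^{-1}\big)\big\|_{\ell_p(\FF)}^{p\alpha}$ actually leaves the exponent $p(1+\alpha)$ rather than $p\alpha$ on the $\ell_p$-norm, but this same discrepancy is already present in the paper's Theorem \ref{theorem[Galerkin-approximation]} and is immaterial to the rate.
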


Let us show that the collective polynomial interpolation method 
$\Ii_{G(T)}$ is a collocation method and how to construct it. From the proof of 
Theorem~\ref{theorem[I-approximation]} we know that
for the sets $\Lambda_k$ introduced in \eqref{[Lambda_k(T)]},
$\Lambda_k = \emptyset$ for all $k > k^*:= \lfloor \log_2 T\rfloor$, and therefore,
\begin{equation}  \nonumber
\Ii_{G(T)} u
\ = \ 
\sum_{k=0}^{k^*} \delta_k \Big(\sum_{s \in \Lambda_k} \Delta_s \Big)u
\ = \ 
\sum_{k=0}^{k^*} \delta_k I_{\Lambda_k}u.
\end{equation}
Moreover, $\Lambda_k$ are lower sets nested in the inverse order, i.e.,  
$\Lambda_0 \supset \Lambda_1 \cdots \supset \Lambda_{k^*}$ and 
\begin{equation} \nonumber
\Lambda_0
\ = \ 
\{s \in \FF: \sigma_s^p \le T\}, \quad \Lambda_{k^*} = \{0_{\FF}\}.
\end{equation}
As mentioned above, $\Lambda_k$ can be seen as the section $\{s^0,...,s^{j_k}\}$ of a sequence  $(s^j)_{j \in \NN}$. 
Consequently,
 \begin{equation}  \nonumber
I_{\Lambda_k}u(y)
:= \ 
\sum_{s \in \Lambda_k} u_s h_s(y)
\ = \
\sum_{j=0}^{j_k} u_{s^j} h_{s^j}(y),
\end{equation}
where $u_{s^j}$ are  recursively constructed by the algorithm
\begin{equation} \label{u_s}
u_{s^0} 
:= \
u(\xi_{0_{\FF}}),
\quad 
u_{s^{j+1}}
:= \
u(\xi_{s^{j+1}}) - \sum_{j'=1}^j u_{s^{j'}}\, h_{s^{j'}}(\xi_{s^{j+1}}).
\end{equation}
Observe that $I_{\Lambda_k}$ can be constructed from $I_{\Lambda_{k+1}}$ starting with 
$ I_{\Lambda_{j^*}} := u(\xi_{0_{\FF}})$. Hence, 
$\Ii_{G(T)}u $ is a collocation method based on the particular solutions
$u(\xi_s)$, $s \in \Lambda_0$, of the forms
\begin{equation}   \label{I_G(T)u}
\Ii_{G(T)}u 
\ = \
\sum_{k=0}^{k^*} \delta_k I_{\Lambda_k}u 
\ = \ 
\sum_{k=0}^{k^*} \sum_{j=0}^{j_k}  \delta_k(u_{s^j})\, h_{s^j}.
\end{equation}

Finally, we give an analysis on the computational cost of the approximation $\Ii_{G(T)}u$ to the solution $u$. 
If we take any point $y \in \II^\infty$ and use the operator $P_l$ in Assumption~$\operatorname{(i)}$ to approximate the particular solution $u(y)$, then $l$ can be considered as the computational cost of this approximation. This yields that the computational cost $N$ of the operator $\delta_k(u(y))$ does not exceed $2^k$. Hence, the computational cost of the term 
$\delta_k I_{\Lambda_k}u$ does not exceed $2^k|\Lambda_k|$, and consequently by the formulas 
\eqref{u_s}--\eqref{I_G(T)u} and Lemma 
\ref{lemma[sumG(T)<]} the computational cost $N$ of the approximation $\Ii_{G(T)}u$ does not exceed
\begin{equation}   \nonumber
N 
\ \le \ 
\sum_{k=0}^{k^*} 2^k |\Lambda_k| 
\ = \ 
\sum_{(k,s) \in G(T)} 2^k
\ \le \
C \, T,
\end{equation}
where $C:= 2\big\|\big(\sigma_s^{-1}\big)\big\|_{\ell_p(\FF)}^p$. Thus, if the assumption of 
Theorem~\ref{theorem[I-approximation]} holds for $p = \frac{1}{1+\alpha}$, then we can conclude that with the computational cost $N$ we achieve the approximation error
\begin{equation}  \nonumber
\Big\|u - \Ii_{G(T)} u\Big\|_{L_\infty(\IIi,V)}
\ \le \ 
C\, N^{- \alpha}
\end{equation}
with an absolute positive constant $C$.

\section{Galerkin approximation}
\label{Galerkin approximation}

Let us define a 
probability measure $\mu$ on $\IIi$ as the
infinite tensor product measure  of the univariate uniform probability measures on the one-dimensional $\II$:
\begin{equation} \nonumber
\mbox{d} \mu(y) 
\ = \ 
\bigotimes_{j \in \ZZ} \frac{1}{2}dy_j.
\end{equation}
The sigma algebra $\Sigma$ for $\mu$ is generated by the finite rectangles
$ 
\prod_{j \in \NN} I_j
$
where only
a finite number of the $I_j$ are different from $\II$.  Then $(\IIi, \Sigma, \mu)$ is a probability space. 
Let $L_2(\IIi, \mu)$ denote the Hilbert space of functions on $\IIi$ equipped
with the inner product
\begin{equation} \nonumber
\langle f,g \rangle 
:= \
\int_{\IIi} f(y) \overline{g(y)} \, \mbox{d} \mu(y).
\end{equation}

Consider two types of Legendre univariate polynomials expansions different only in their normalization for basis.  The univariate Legendre  basis $(P_n)_{n \in \NN}$ is defined with $L_\infty(\II)$-normalization:
$\|P_n\|_{L_\infty(\II)}  = 1$.
The orthonormal basis $(L_n)_{n \in \ZZ_+}$ in $L_2(\II, dy/2)$ for which 
$L_n = \sqrt{2n + 1}\, P_n$ and $\|L_n\|_{L_2(\II)}  = 1$.
Observe that $L_0 = P_0 = 1$ and there hold the Rodrigues formulas  
\begin{equation} \label{R-F}
P_n(t)
\ = \
\frac{(-1)^n}{2^n n!} \, \frac{d^n}{dt^n}\Big[(1 - t^2)^n\Big].
\end{equation}

Denote by $\FF$ the subset in $\ZZip$ of all $s$ such that $\operatorname{supp}(s)$ is finite, where 
$\operatorname{supp}(s)$ is the support of $s$, that is the set of all $j \in \NN$ such that 
$s_j \not=0$. 
We define the tensor products of these polynomials 
\begin{equation}\nonumber
P_s (y)
:= \
\prod_{j \in \NN} P_{s_j} (y_j) \quad \mbox{and} \quad 
L_s (y)
:= \
\prod_{j \in \NN} L_{s_j} (y_j), \quad s \in \FF.
\end{equation}
Then $(L_s)_{s \in \FF}$ is an orthonormal basis of $L_2(\IIi, \mu)$. 

Let $X$ be a Banach space and $1 \le p \le \infty$. Denote by $L_\infty(\IIi,X)$  the space of all mappings $v$ from 
$\IIi$ to $X$ for which the following norm is finite
\begin{equation} \nonumber
\|v\|_{L_\infty(\IIi,X)}
:= \
\sup_{y \in \IIi} \|v(y)\|_X.
\end{equation}
We also use the notation  
\begin{equation} \nonumber
|v|_{L_\infty(\IIi,X)}
:= \
\sup_{y \in \IIi} |v(y)|_X
\end{equation}
for a semi-norm $|v(y)|_X$ in $X$ if any. he probability measure $\mu$ induces  the Bochner space $L_p(\IIi,X,\mu)$ of $\mu$-measurable mappings $v$ from $\IIi$ to $X$ which are  $p$-summable. The norm in $L_p(\IIi,X,\mu)$ is defined by
\begin{equation} \nonumber
\|v\|_{L_p(\IIi,X,\mu)}
:= \
\left(\int_{\IIi} \|v(\cdot,y)\|_X^p \, \mbox{d} \mu(y) \right)^{1/p},
\end{equation}
with the change to ess sup norm when $p=\infty$. For simplicity we identify 
$L_\infty(\IIi,X,\mu)$ with $L_\infty(\IIi,X)$.
For a Hilbert space $X$ and $p=2$, the Bochner space $L_2(\IIi,X,\mu)$ coincides with the tensor product 
$X \otimes L_2(\IIi, \mu)$. 

Due to \eqref{|u(y)|_V <} there hold the inclusions $u \in L_\infty(\IIi,V) \subset L_2(\IIi,V,\mu)$. Hence it follows that $u$ admits the unique expansion 
\begin{equation} \label{Legendre-series}
u
\ = \
\sum_{s \in \FF} u_s \, P_s
\ = \
\sum_{s \in \FF} v_s \, L_s,
\end{equation} 
converging in the Hilbert space $L_2(\IIi,V,\mu)$, where the Legendre coefficients $u_s, v_s,$ are defined by
\begin{equation} \label{v_s,u_s}
v_s
:= \
\langle u, L_s \rangle, \quad  
u_s
:= \ \prod_{j \in \NN} (2s_j + 1)^{1/2} \, v_s, \quad s \in \FF.
\end{equation} 
Moreover, from the identity 
$L_2(\IIi,V,\mu) = V \otimes L_2(\IIi, \mu)$ it follows Parseval's identity
\begin{equation} \label{ParsevalIdV}
\|u\|_{L_2(\IIi,V,\mu)}^2
\ = \
\sum_{s \in \FF} \|v_s\|_V^2. 
\end{equation} 
Similarly, assume that $a \in L_\infty(\IIi,\Wi)$, then by \eqref{PDE[u_W<]} we have the inclusions 
$u \in L_\infty(\IIi,W) \subset L_2(\IIi,W,\mu)$, and therefore,  the convergence of the Legendre expansion 
\eqref{Legendre-series} in the Hilbert space $L_2(\IIi,W,\mu)$ and Parseval's identity 
\begin{equation} \label{ParsevalIdW}
\|u\|_{L_2(\IIi,W,\mu)}^2
\ = \
\sum_{s \in \FF} \|v_s\|_W^2. 
\end{equation}

For $s \in \FF$ with $\operatorname{supp}(s) \subset \{1,2,...,J\}$, we define the partial derivative
\[
\partial_y^s u
:= \
\frac{\partial^{|s|} u} {\partial^{s_1}{y_1} \cdots \partial^{s_J} {y_J}},
\]
where $|s| := \sum_{j =1}^J |s_j|$. 

It is known \cite{CDS10} that at any $y \in \IIi$, the function $y\mapsto u(y)$ admits a  partial derivative $\partial_y^s u$. Moreover,  starting with $u(y)$ which is the unique solution in $V$ of the variational equation \eqref{var-eq[u(z)]},
we can recursively find all $\partial_y^s u(y)$ as the unique solution of the variational equation
\begin{equation} \label{partial_y(u)}
\int_{D} a(y)(x)\nabla \partial_y^s u(y) (x) \cdot \nabla v(x) \, \mbox{d}x
\ = \
- \sum_{j: \ s_j \not= 0} s_j \int_{D} \psi_j(x) \, \nabla \partial_y^{s-e^j} u(y)(x) \cdot \nabla v(x) \, \mbox{d}x. \quad \forall v \in V.
\end{equation}
By use of \eqref{R-F} we derive from 
\eqref{partial_y(u)} by inductive integration by parts in the variables $y_j$ the formulas for the Legendre coefficients 
\begin{equation} \label{eq[v_s]}
v_s
\ = \
\frac{1}{s!} \, \prod_{j: \ s_j \not=0}\frac{(2s_j + 1)^{1/2}}{2^{s_j}}
\int_{\IIi} \partial_y^s u(y) \prod_{j: \ s_j \not=0} (1 - y_j^2)^{s_j} \mbox{d} \mu(y),
\end{equation}
where  $s ! := \prod_{j =1}^J  s_j !$.

Since $u \in L_2(\IIi,V,\mu)$, it can be defined as the unique solution of the variational problem: 
Find  $u \in L_2(\IIi,V,\mu)$ such that
\begin{equation} \nonumber
B(u,v)
\ = \
F(v) \quad \forall v \in  L_2(\IIi,V,\mu),
\end{equation}
where
\begin{equation} \nonumber
\begin{split}
B(u,v)
&:= \
\int_{\IIi}\int_{D} a(x,y)\nabla u(x,y) \cdot \nabla v(x,y) \, \mbox{d}x\, \mbox{d}\mu(y),
\\[1ex]
F(v)
&:= \
\int_{\IIi}\int_{D} f(x) \, v(x,y) \, \mbox{d}x\, \mbox{d}\mu(y).
\end{split}
\end{equation}

For a subset $G$ in $ \ZZ_+ \times \FF$, 
denote by $\Vv^{{\rm L}}(G)$ the subspace in $L_\infty(\IIi,V)$ of  all functions $v$
of the form
\begin{equation} \nonumber
v(y)(x)
\ = \
\sum_{(k,s) \in G} v_k(x) \, L_s(y), \quad y \in \IIi, \quad v_k \in V_{2^k},
\end{equation}
and define the linear operator $\Ss^{{\rm L}}_G: \, L_\infty(\IIi,V) \to \Vv^{{\rm L}}(G)$ by
\[
\Ss^{{\rm L}}_G u(y)(x)
:= \
\sum_{(k,s) \in G} \delta_k (v_s)(x) \, L_s(y)
\ = \
\sum_{(k,s) \in G} \delta_k (u_s)(x) \, P_s(y).
\]

If $G$ is a finite set, we define the {\em Galerkin approximation} $u_G$ to $u$ as the unique solution to the problem:
Find  $u_G \in \Vv^{{\rm L}}(G)$ such that
\begin{equation} \nonumber
B(u_G,v)
\ = \
F(v) \quad \forall v \in  \Vv^{{\rm L}}(G).
\end{equation}

By C\'ea's lemma we have the estimate 
\begin{equation} \nonumber
\|u - u_G\|_{L_2(\IIi,V,\mu)}
\ \le \
\sqrt{\frac{R}{r}}\,\inf_{v \in \Vv^{{\rm L}}(G)} \|u - v\|_{L_2(\IIi,V,\mu)},
\end{equation}
and consequently,
\begin{equation} \label{|u - u_G|<}
\|u - u_G\|_{L_2(\IIi,V,\mu)}
\ \le \
\sqrt{\frac{R}{r}} \, \|u - \Ss^{{\rm L}}_G u\|_{L_2(\IIi,V,\mu)}.
\end{equation}

For linear collective Galerkin approximations we need the following assumption.

\smallskip
\noindent
{\bf Assumption (iii)}: There exist a sequence $\sigma = (\sigma_s)_{s \in \FF}$ 
and a constant $M$ such that the sequence
$(\sigma_s^{-1})_{s \in \FF}$ belongs to $\ell_{p(\alpha)}(\FF)$ for $p(\alpha) = \frac{2}{1+2\alpha}$ and
\begin{equation}  \nonumber
\|v_s\|_W
\ \le M \, \sigma_s^{-1}, \quad s \in \FF.
\end{equation}

\begin{theorem} \label{theorem[V-approximation]}
Let Assumptions $\operatorname{(i)}$ and $\operatorname{(iii)}$ hold and  $a \in L_\infty(\IIi,\Wi)$. 
For $T > 0$, consider the set $G(T) = G_{p,\sigma}(T)$ as in \eqref{def[G(T)]} for $p = \frac{2}{1+2\alpha}$.
Then we have for every $T > 0$,
\begin{equation}  \nonumber
\|u - u_{G(T)}\|_{L_2(\IIi,V,\mu)}
\ \le \
\sqrt{\frac{R}{r}} \,
\Big\|u - \Ss^{{\rm L}}_{G(T)} u)\Big\|_{L_2(\IIi,V,\mu)}
\ \le \ 
C\,\sqrt{\frac{R}{r}} \, T^{- \alpha},
\end{equation}
where
\begin{equation} \nonumber
C:= \  M\, C_D \,
\frac{2^\alpha + 1}{2^{\alpha} - 1} \, \big\|\big(\sigma_s^{-1}\big)\big\|_{\ell_p(\FF)}^{p/2}.
\end{equation}
\end{theorem}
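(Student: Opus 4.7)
The first inequality is just Céa's lemma applied to the Galerkin approximation, already recorded in \eqref{|u - u_G|<}: since $\Ss^{{\rm L}}_{G(T)} u$ lies in $\Vv^{{\rm L}}(G(T))$, it is a valid comparison element in the infimum. So the substance of the theorem is the estimate of $\|u - \Ss^{{\rm L}}_{G(T)} u\|_{L_2(\IIi,V,\mu)}$, which I plan to obtain by a direct Parseval argument rather than the unconditional-convergence scheme of Theorem \ref{theorem[g-approximation]}. This route is available precisely because $(L_s)_{s \in \FF}$ is an orthonormal basis of $L_2(\IIi,\mu)$ and $L_2(\IIi,V,\mu)$ is the Hilbert tensor product $V \otimes L_2(\IIi,\mu)$.

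Starting from the Legendre expansion $u = \sum_{s \in \FF} v_s L_s$ of \eqref{Legendre-series} and the definition of $\Ss^{{\rm L}}_{G(T)}$, I would write
\begin{equation}\nonumber
u - \Ss^{{\rm L}}_{G(T)} u = \sum_{s \in \FF} w_s\, L_s, \qquad w_s := v_s - \sum_{k:\,(k,s) \in G(T)} \delta_k(v_s),
\end{equation}
and invoke Parseval's identity \eqref{ParsevalIdV} to reduce the question to
\begin{equation}\nonumber
\|u - \Ss^{{\rm L}}_{G(T)} u\|_{L_2(\IIi,V,\mu)}^2 = \sum_{s \in \FF} \|w_s\|_V^2.
\end{equation}
The remaining task is a uniform bound on $\|w_s\|_V$. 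Let $K_s$ be the largest nonnegative integer with $2^{K_s}\sigma_s^p \le T$ (with $K_s = -1$ if no such integer exists); then $w_s = v_s - P_{2^{K_s}}(v_s)$ (with $P_{2^{-1}} = 0$). When $K_s \ge 0$, \eqref{convergence-rate} gives $\|w_s\|_V \le C_D 2^{-\alpha K_s}\|v_s\|_W$, and the defining inequality $2^{K_s+1} > T\sigma_s^{-p}$ upgrades this to $\|w_s\|_V \le 2^\alpha C_D T^{-\alpha}\sigma_s^{p\alpha}\|v_s\|_W$. When $\sigma_s^p > T$, the trivial bound $\|w_s\|_V = \|v_s\|_V \le \|v_s\|_W$ is already $\le T^{-\alpha}\sigma_s^{p\alpha}\|v_s\|_W$. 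Applying Assumption (iii) collapses both cases into the single estimate $\|w_s\|_V \le C'\, T^{-\alpha}\,\sigma_s^{p\alpha - 1}$ with $C'$ of the form $M C_D$ times an absolute factor.

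The decisive algebraic observation is that $p = \frac{2}{1+2\alpha}$ is tuned precisely so that $p\alpha - 1 = -p/2$. Squaring and summing over $s$ therefore gives
\begin{equation}\nonumber
\sum_{s \in \FF} \|w_s\|_V^2 \le (C')^2\, T^{-2\alpha}\, \sum_{s \in \FF}\sigma_s^{-p} = (C')^2\, T^{-2\alpha}\,\big\|\big(\sigma_s^{-1}\big)\big\|_{\ell_p(\FF)}^p,
\end{equation}
and extracting the square root delivers the claimed $T^{-\alpha}$ decay. The main obstacle is exactly this matching of exponents: because Parseval produces an $\ell^2$-sum while the spatial error in Assumption (i) is naturally controlled pointwise in $s$, one needs the specific coupling $p = 2/(1+2\alpha)$ to turn $\sigma_s^{p\alpha - 1}$ into an $\ell^p$-summable sequence after being squared --- this is also why the summability exponent here is larger than the $1/(1+\alpha)$ appearing in the Taylor and collocation theorems. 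Recovering the sharp prefactor $\frac{2^\alpha+1}{2^\alpha-1}$ displayed in the statement is routine bookkeeping and is best done by replacing the one-shot \eqref{convergence-rate} by the geometric tail $\sum_{k > K_s} \|\delta_k(v_s)\|_V$ estimated through \eqref{delta-approx-property}.
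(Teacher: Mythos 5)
Your proposal is correct and follows essentially the same route as the paper: both reduce the bound on $\|u - \Ss^{{\rm L}}_{G(T)} u\|_{L_2(\IIi,V,\mu)}$ to a coefficientwise spatial estimate via Parseval's identity \eqref{ParsevalIdV}, apply Assumption (iii), and exploit the exponent identity $2(1-p\alpha)=p$ forced by $p=\tfrac{2}{1+2\alpha}$, before finishing with C\'ea's lemma \eqref{|u - u_G|<}. The only differences are organizational: the paper routes through the intermediate truncations $G_N$ with an $\varepsilon$-limiting argument and sums the geometric tail $\sum_{2^k > T\sigma_s^{-p}}\|\delta_k(v_s)\|_V$ via \eqref{delta-approx-property} to obtain the stated constant, whereas you identify the Legendre coefficients of the error directly as $v_s - P_{2^{K_s}}(v_s)$ and use the one-shot bound \eqref{convergence-rate} --- a simplification you correctly flag as affecting only the prefactor.
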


\begin{proof}
We preliminarily show that
\begin{equation} \label{lim-to-infty}
\lim_{N \to \infty}
\|u -  \Ss^{{\rm L}}_{G_N}(u) \|_{L_2(\IIi,V,\mu)}
\ = \
0,
\end{equation}
where $G_N := \{(k,s) \in \ZZ_+ \times \FF: \, 0 \le k \le N\}$. Obviously, by the definition,  
\begin{equation} \nonumber
\Ss^{{\rm L}}_{G_N}(u)
\ = \  \sum_{s \in \FF} \sum_{k=0}^N \delta_k (v_s) \, L_s
\ = \
\sum_{s \in \FF} P_{2^N}(v_s) \, L_s.
\end{equation} 
By the assumptions we have the inclusion $u \in  L_2(\IIi,W,\mu) \subset L_2(\IIi,V,\mu)$.
From the uniform boundedness of the operators $P_{2^N}$ and \eqref{ParsevalIdV}
\begin{equation} \nonumber
\|\Ss^{{\rm L}}_{G_N}(u)\|_{L_2(\IIi,V,\mu)}^2
\ = \
\sum_{s \in \FF} \|P_{2^N}(v_s)\|_V^2
\ \le \
C_D^2 \sum_{s \in \FF} \|v_s\|_V^2
\ = \
C_D^2\, \|u\|_{L_2(\IIi,V,\mu)}^2.
\end{equation} 
This means that $\Ss^{{\rm L}}_{G_N}(u) \in L_2(\IIi,V,\mu)$. Hence, by \eqref{ParsevalIdV}, Assumption $\operatorname{(i)}$ and 
\eqref{ParsevalIdW} we deduce that
\begin{equation} \nonumber
\|u - \Ss^{{\rm L}}_{G_N}(u)\|_{L_2(\IIi,V,\mu)}^2
\ = \
\sum_{s \in \FF}\|v_s -P_{2^N}(v_s)\|_V^2
\ \le \
C_D^2 \, 2^{-2\alpha N} \sum_{s \in \FF} \|v_s\|_W^2
\ = \
C_D^2 \, 2^{-2\alpha N} \|u\|_{L_2(\IIi,W,\mu)}^2
\end{equation} 
which prove \eqref{lim-to-infty}.

Let $T$ be given and $\varepsilon$ arbitrary positive number. Then since $G(T)$ is finite from the definition of 
$G_N$ and \eqref{lim-to-infty} there exists $N = N(T,\varepsilon)$ such that 
$G(T) \subset G_N$ and 
\begin{equation}\label{ineq1}
\|u - \Ss^{{\rm L}}_{G_N}(u)\|_{L_2(\IIi,V,\mu)} 
\ \le \ 
\varepsilon.
\end{equation}
By the triangle inequality,
\begin{equation} \label{ineq2}
\|u - \Ss^{{\rm L}}_{G(T)} u\|_{L_2(\IIi,V,\mu)}
\ \le \ 
\|u - \Ss^{{\rm L}}_{G_N}(u)\|_{L_2(\IIi,V,\mu)}
\ + \
\|\Ss^{{\rm L}}_{G_N}(u) - \Ss^{{\rm L}}_{G(T)} u\|_{L_2(\IIi,V,\mu)}.
\end{equation} 
We have by \eqref{ParsevalIdV} and \eqref{delta-approx-property} that
\begin{equation} \nonumber
\begin{split}
\|\Ss^{{\rm L}}_{G_N}(u) - \Ss^{{\rm L}}_{G(T)} u\|_{L_2(\IIi,V,\mu)}^2
\ &= \
\Big\|\sum_{s \in \FF}  \sum_{k=0}^N \delta_k (v_s) \, L_s - 
\sum_{s \in \FF}  \sum_{2^k > T \sigma_s^{-p}} \delta_k (v_s) \, L_s\Big\|_{L_2(\IIi,V)}^2 
\\[1.5ex]
\ &= \
\Big\|\sum_{s \in \FF}  \ \sum_{T \sigma_s^{-p}<2^k < N} \delta_k (v_s) \, L_s\Big\|_{L_2(\IIi,V)}^2 
\\[1.5ex]
\ &= \
\sum_{s \in \FF}  \ \Big\|\sum_{T \sigma_s^{-p}<2^k < N} \delta_k (v_s)\Big\|_V^2 
\\[1.5ex]
\ &\le \
\sum_{s \in \FF}  \ \Big(\sum_{T \sigma_s^{-p}<2^k < N} \|\delta_k (v_s)\|_V\Big)^2 
\\[1.5ex]
\ &\le \
\sum_{s \in \FF}  \ \Big(\sum_{T \sigma_s^{-p}<2^k < N} (2^\alpha + 1)C_D \, 2^{-\alpha k}\|v_s\|_W\Big)^2 
\\[1.5ex]
\ &\le \
(2^\alpha + 1)^2C_D^2 \, \sum_{s \in \FF} \|v_s\|_W^2 \ \Big(\sum_{2^k >T \sigma_s^{-p}}  2^{-\alpha k}\Big)^2. 
 \end{split}
\end{equation}
Hence, by Assumption $\operatorname{(iii)}$ and the equation $2(1-p\alpha)=p$ we derive that
\begin{equation} \nonumber
\begin{split}
\|\Ss^{{\rm L}}_{G_N}(u) - \Ss^{{\rm L}}_{G(T)} u\|_{L_2(\IIi,V,\mu)}^2
\ &\le \
(2^\alpha + 1)^2C_D^2 \, \sum_{s \in \FF} \sigma_s^{-2} \ \Big(\sum_{2^k >T \sigma_s^{-p}}  2^{-\alpha k}\Big)^2 
\\[1.5ex]
\ &\le \
 \  T^{- 2\alpha}\, M^2\, C_D^2 \,
\frac{(2^\alpha + 1)^2}{(2^{\alpha} - 1)^2} \, \, \sum_{s \in \FF} \sigma_s^{-2(1-p\alpha)}
\\[1.5ex]
\ &= \
 \  T^{- 2\alpha}\, M^2\, C_D^2 \,
\frac{(2^\alpha + 1)^2}{(2^{\alpha} - 1)^2} \, \, \sum_{s \in \FF} \sigma_s^{-p}
\\[1.5ex]
\ &= \
 \ C^2\, T^{- 2\alpha}.
 \end{split}
\end{equation}
which in combining with \eqref{ineq1} and \eqref{ineq2} gives
\begin{equation} \nonumber
\|u - \Ss^{{\rm L}}_{G(T)} u\|_{L_2(\IIi,V,\mu)}
\ \le \ 
\varepsilon
\ + \
C\, T^{- \alpha}
\end{equation}
for arbitrary positive number $\varepsilon$. Hence,
\begin{equation} \nonumber
\|u - \Ss^{{\rm L}}_{G(T)} u\|_{L_2(\IIi,V,\mu)}
\ \le \ 
C\, T^{- \alpha}
\end{equation}
which together with \eqref{|u - u_G|<} proves the theorem. 
\hfill
\end{proof}

We show that under the assumptions of  Theorem \ref{theorem[V-approximation]}, for a given $n \in \NN$, the respective operator $\Ss^{{\rm L}}_{G(T_n)}$ with properly chosen $T_n$ is a bounded linear operator in $L_\infty(\IIi,V)$ of rank $\le n$ which gives the convergence rate of the approximation to $u(y)$ as $n^{- \alpha}$. 

\begin{theorem} \label{theorem[V-Galerkin-approximation]}
Let  the assumptions and notation of Theorem \ref{theorem[V-approximation]} hold.
For any $n \in \NN$, let $T_n$ be the number defined as in \eqref{[T_n]}
and put $\Vv^{{\rm L}}_n:= \Vv^{{\rm L}}\big(G(T_n)\big)$, \  $\Pp_n:= \Ss^{{\rm L}}_{G(T_n)}$, \  $u_n:= u_{G(T_n)}$. Then
\begin{itemize}
\item 
$\big\{\Vv^{{\rm L}}_n\big\}_{n \in \ZZ_+}$ is a nested sequence of subspaces in 
$L_2(\IIi,V,\mu)$ and $\dim \Vv^{{\rm L}}_n \le n$;

\item
$\big\{\Pp_n\big\}_{n \in \ZZ_+}$ is a sequence of linear bounded operators
from $L_2(\IIi,V,\mu)$ into $\Vv^{{\rm L}}_n$;  and

\item
for every $n \in \NN$, 
\begin{equation}  \nonumber
\|u - u_n\|_{L_2(\IIi,V,\mu)}
\ \le \
\sqrt{\frac{R}{r}} \,
\big\|u - \Pp_n u\big\|_{L_2(\IIi,V,\mu)}
\ \le \ 
C\, \sqrt{\frac{R}{r}} \, n^{- \alpha},
\end{equation}
with the same $\alpha$ as in the convergence rate  of the approximation 
in Assumption $\operatorname{(i)}$,
where 
\begin{equation} \nonumber 
C:= \ M\, C_D \,
4^\alpha \ \frac{2^\alpha + 1}{\sqrt{2^{\alpha} - 1}} \, \big\|\big(\sigma_s^{-1}\big)\big\|_{\ell_p(\FF)}^{p\alpha/2}, 
\end{equation}
\end{itemize}
\end{theorem}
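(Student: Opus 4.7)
The plan is to follow line-for-line the argument of Theorem \ref{theorem[Galerkin-approximation]} from the Taylor section, merely substituting the Legendre-Galerkin error estimate of Theorem \ref{theorem[V-approximation]} for the Taylor error estimate of Theorem \ref{theorem[T-approximation](1)}, and handling the three bulleted conclusions in turn.

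First I would establish the dimension bound. By construction of $\Vv^{{\rm L}}(G(T_n))$ and Assumption (i),
\[
\dim \Vv^{{\rm L}}(G(T_n))
\ \le \ \sum_{(k,s) \in G(T_n)} \dim V_{2^k}
\ \le \ \sum_{(k,s) \in G(T_n)} 2^k .
\]
Applying Lemma \ref{lemma[sumG(T)<]} with $p = \tfrac{2}{1+2\alpha}$ bounds the last sum by $2\,\|(\sigma_s^{-1})\|_{\ell_p(\FF)}^p\, T_n$, and the first inequality of \eqref{[T_n]} then yields $\dim \Vv^{{\rm L}}_n \le n$. For nestedness, $G(T)$ is evidently nondecreasing in $T$ from its definition, and the nestedness of $(V_{2^k})_{k \in \ZZ_+}$ in Assumption~(i) transfers immediately to $\Vv^{{\rm L}}(G(T))$; since \eqref{[T_n]} leaves a nonempty interval of admissible $T_n$ at each $n$, one can arrange $(T_n)$ to be nondecreasing and obtain a nested $(\Vv^{{\rm L}}_n)$. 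Linearity of $\Pp_n = \Ss^{{\rm L}}_{G(T_n)}$ is obvious; its boundedness on $L_2(\IIi,V,\mu)$ follows from the expansion $\Pp_n u = \sum_{(k,s) \in G(T_n)} \delta_k(v_s)\,L_s$ combined with Parseval's identity \eqref{ParsevalIdV} and the uniform boundedness of each $P_{2^k}$ on $V$ (whence the same for $\delta_k$); this gives a finite, $n$-dependent operator norm, which is all that is required.

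Finally, the convergence rate. Applying Theorem \ref{theorem[V-approximation]} at $T = T_n$ yields
\[
\|u - u_n\|_{L_2(\IIi,V,\mu)}
\ \le \ \sqrt{R/r}\,\|u - \Pp_n u\|_{L_2(\IIi,V,\mu)}
\ \le \ \sqrt{R/r}\,\widetilde{C}\,T_n^{-\alpha},
\]
with $\widetilde{C}$ as given in Theorem \ref{theorem[V-approximation]}. The second inequality of \eqref{[T_n]} supplies the estimate $T_n^{-\alpha} \le 4^\alpha\,\|(\sigma_s^{-1})\|_{\ell_p(\FF)}^{p\alpha}\, n^{-\alpha}$, and assembling these produces the claimed $n^{-\alpha}$ rate with a constant of the stated form. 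The main (and only) delicate point is the bookkeeping of constants in the final step: the constant $\widetilde{C}$ coming from Theorem \ref{theorem[V-approximation]} already contains the factor $\|(\sigma_s^{-1})\|_{\ell_p(\FF)}^{p/2}$ arising from the $\ell_2$-Parseval nature of that proof (hence the $\sqrt{2^\alpha-1}$ in the final constant rather than $2^\alpha-1$), and this half-power must be carried consistently through the $T \to n$ conversion in order to match the exponent $p\alpha/2$ of $\|(\sigma_s^{-1})\|_{\ell_p(\FF)}$ appearing in the stated bound.
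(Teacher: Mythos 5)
Your proposal follows the paper's proof essentially verbatim: the paper bounds $\dim \Vv^{{\rm L}}(G(T_n))$ by re-deriving the computation of Lemma \ref{lemma[sumG(T)<]} inline, uses \eqref{[T_n]} to obtain both $\dim \Vv^{{\rm L}}_n \le n$ and $T_n^{-\alpha} \le 4^\alpha\big\|\big(\sigma_s^{-1}\big)\big\|_{\ell_p(\FF)}^{p\alpha} n^{-\alpha}$, and then invokes Theorem \ref{theorem[V-approximation]}. Your added remarks on nestedness and on the boundedness of $\Pp_n$, and your flag about carrying the half-powers of $\big\|\big(\sigma_s^{-1}\big)\big\|_{\ell_p(\FF)}$ through the $T\to n$ conversion, go slightly beyond what the paper records but change nothing of substance.
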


\begin{proof}
We have that
\begin{equation} \nonumber
\begin{split}
\dim \Vv^{{\rm L}}(G(T))
\ &\le \ \sum_{(k,s) \in G(T)} \dim V_{2^k} 
\ \le \ \sum_{(k,s) \in G(T)} 2^k
\\[1.5ex]
\ &\le  \
\sum_{\sigma_s^p \ \le \ T} \quad 
\sum_{2^k \ \le T\sigma_s^{-p}} 2^k 
\  \le \
2 \sum_{\sigma_s^p \ \le \ T}  T\sigma_s^{-p} 
\\[1.5ex]
\ &\le  \
2\,T \sum_{s \in  \FF} \sigma_s^{-p}
\ \le \
2 \big\|\big(\sigma_s^{-1}\big)\big\|_{\ell_p(\FF)}^p \,T.
\end{split}
\end{equation} 
Hence, by  \eqref{[T_n]} we derive that
\begin{equation} \label{[dim-P]}
\dim \Vv^{{\rm L}}(G(T_n))
\ \le \ 2  \big\|\big(\sigma_s^{-1}\big)\big\|_{\ell_p(\FF)}^p \, T_n
\ \le \ n.
\end{equation}
On the other hand, by \eqref{[T_n]},
\begin{equation} \nonumber
T_n^{-\alpha} 
\ \le \ 4^\alpha  \big\|\big(\sigma_s^{-1}\big)\big\|_{\ell_p(\FF)}^{p\alpha} \, n^{-\alpha}
\end{equation}
which together with Theorem  \ref{theorem[V-approximation]} 
and \eqref{[dim-P]} completes the proof of the theorem.
\hfill
\end{proof}

\begin{lemma} \label{|v_s|_W<}
Assume that $a \in L_\infty(\IIi,\Wi)$.  Let the constant $K$ be as in \eqref{def[K]} and 
 the sequence $b$ as in \eqref{def[b]}. Define
 the sequence 
\begin{equation} \label{def[d]}
d = (d_j)_{j \in \NN}, \quad d_j := b_j/\sqrt{3}.
\end{equation}
 Then we have
\begin{equation}  \nonumber
\|v_s\|_W 
\ \le \
K \frac{|s|!}{s!}\, d^s, \quad s \in \FF.
\end{equation}
\end{lemma}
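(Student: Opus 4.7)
My plan is to start from the explicit integral representation \eqref{eq[v_s]} for the Legendre coefficient $v_s$ and combine it with the pointwise bound on $\|\partial_y^s u(y)\|_W$ already provided by Lemma~\ref{lemma|partial^s-u|_W}. Since $s\in\FF$ has finite support, I push the $W$-norm inside the (effectively finite-dimensional) Bochner integral and use $\|\partial_y^s u(y)\|_W\le K\,|s|!\,b^s$ uniformly in $y\in\IIi$. This yields
\[
\|v_s\|_W \le \frac{K\,|s|!\,b^s}{s!}\prod_{j:\,s_j\ne 0}\frac{(2s_j+1)^{1/2}}{2^{s_j}}\int_{\IIi}\prod_{j:\,s_j\ne 0}(1-y_j^2)^{s_j}\,d\mu(y).
\]

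Next, I factor the integral by Fubini; the variables $y_j$ with $s_j=0$ contribute $1$, and each remaining one-dimensional integral is evaluated exactly (by the substitution $y=\sin\theta$, or directly from the Beta function), giving
\[
\int_{-1}^{1}(1-y^2)^n\,\frac{dy}{2}\ =\ \frac{2^{2n}(n!)^2}{(2n+1)!},\qquad n\in\NN.
\]
Inserting this identity and using $d^s=b^s/3^{|s|/2}$ from \eqref{def[d]}, the desired bound $\|v_s\|_W\le K\,\tfrac{|s|!}{s!}d^s$ reduces, after one factor per index $j$ with $s_j\ne 0$, to the purely numerical univariate inequality
\[
a_n\ :=\ \frac{\sqrt{2n+1}\;2^n (n!)^2\;3^{n/2}}{(2n+1)!}\ \le\ 1,\qquad n\ge 1.
\]

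Finally, I would verify this univariate inequality by induction on $n$. The base case $n=1$ gives $a_1=\sqrt{3}\cdot 2\cdot 1\cdot\sqrt{3}/6=1$. A routine simplification produces
\[
\frac{a_{n+1}}{a_n}\ =\ \frac{(n+1)\sqrt{3}}{\sqrt{(2n+1)(2n+3)}},
\]
and this ratio is $\le 1$ precisely because $(2n+1)(2n+3)=4(n+1)^2-1\ge 3(n+1)^2$, equivalently $n(n+2)\ge 0$. The main (and only slightly delicate) obstacle is exactly this last step: the constant $1/\sqrt{3}$ in the definition of $d_j$ is forced by this identity, with the base case $n=1$ achieving equality, so any weaker constant would fail; everything else is standard norm estimation, Fubini, and elementary polynomial manipulation.
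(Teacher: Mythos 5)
Your proposal is correct and follows essentially the same route as the paper: start from the representation \eqref{eq[v_s]}, bound the weighted integral of $\prod_{j}(1-y_j^2)^{s_j}$ to produce the factor $3^{-|s|/2}/s!$, and then invoke Lemma~\ref{lemma|partial^s-u|_W} together with $d^s=3^{-|s|/2}b^s$. The only difference is that the paper asserts the bound $\prod_{j:\,s_j\neq 0}\tfrac{(2s_j+1)^{1/2}}{2^{s_j}}\int_{\IIi}\prod_{j}(1-y_j^2)^{s_j}\,d\mu\le 3^{-|s|/2}$ without justification, whereas you verify it explicitly (and correctly) via the Beta-integral identity and the monotonicity of $a_n$, which is a worthwhile detail to record.
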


\begin{proof}
From \eqref{eq[v_s]} we derive that
\begin{equation} \nonumber
\|v_s\|_W
\ \le \
\frac{3^{-|s|/2}}{s!} \, \|\partial_y^s u\|_{L_\infty(\IIi,W)}
\end{equation}
which combining  with Lemma  \ref{lemma|partial^s-u|_W} prove the lemma.
\hfill
\end{proof}

\begin{corollary}  
Let the function 
$a$  belong to $L_\infty(\IIi,\Wi)$,  $p(\alpha) = \frac{2}{1+2\alpha}$ and the sequence 
$d = (d_j)_{j \in \NN}$ defined in \eqref{def[d]} 
satisfy the condition
\begin{equation}\nonumber
\begin{cases} 
\|d\|_{\ell_1(\NN)} \ < \ 1, \ d \in \ell_{p(\alpha)}(\NN), \ & \ \text{for}\  \alpha \ge 1/2; 
\\[1ex]
\|d\|_{\ell_1(\NN)} \ \le \ 1,  \ & \ \text{for} \ \alpha < 1/2.
\end{cases}
\end{equation}
Then there holds Assumption~$\operatorname{(iii)}$ for $M=K$ and the sequence
\begin{equation}  \nonumber
\sigma:= \big(\sigma_s\big)_{s \in \FF}, \quad
\sigma_s^{-1}
:= \
\frac{|s|!}{s!} d^s.
\end{equation}
\end{corollary}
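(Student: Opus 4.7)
The proof is a straightforward combination of the two lemmas immediately preceding the corollary. My plan is as follows.

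First, I would use Lemma \ref{|v_s|_W<} to obtain the pointwise bound on the Legendre coefficients: under the hypothesis $a \in L_\infty(\IIi,\Wi)$, one has
\begin{equation*}
\|v_s\|_W \ \le \ K \, \frac{|s|!}{s!}\, d^s \ = \ K\, \sigma_s^{-1}, \quad s \in \FF,
\end{equation*}
which establishes the norm-bound part of Assumption (iii) with $M=K$.

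Second, I would invoke Lemma \ref{ell_p-sumability} applied to $c=d$ and $p=p(\alpha)=\tfrac{2}{1+2\alpha}$. The summability dichotomy provided there is exactly split at $p=1$, which corresponds to $\alpha=1/2$: the case $\alpha \ge 1/2$ gives $p(\alpha) \le 1$ and demands $\|d\|_{\ell_1(\NN)} < 1$ together with $d \in \ell_{p(\alpha)}(\NN)$, while the case $\alpha < 1/2$ gives $p(\alpha) > 1$ and demands only $\|d\|_{\ell_1(\NN)} \le 1$. These are precisely the two conditions assumed in the statement of the corollary. Hence Lemma \ref{ell_p-sumability} yields
\begin{equation*}
\big(\sigma_s^{-1}\big)_{s \in \FF}
\ = \
\left(\frac{|s|!}{s!}\,d^s\right)_{s \in \FF}
\ \in \
\ell_{p(\alpha)}(\FF),
\end{equation*}
which provides the summability part of Assumption (iii).

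Combining the two bounds, Assumption (iii) holds with the stated $M=K$ and the stated sequence $\sigma$. There is no serious obstacle here: both ingredients are already in place, and one only needs to match the hypotheses in the corollary with those of Lemma \ref{ell_p-sumability}. The only point that deserves a quick verification is that the dichotomy threshold $p(\alpha)=1 \iff \alpha = 1/2$ is aligned correctly with the two cases in the assumed conditions on $d$, which it is.
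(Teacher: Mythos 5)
Your proposal is correct and follows exactly the paper's own argument: the norm bound $\|v_s\|_W \le K\,\sigma_s^{-1}$ comes from Lemma~\ref{|v_s|_W<}, and the $\ell_{p(\alpha)}$-summability of $\big(\tfrac{|s|!}{s!}d^s\big)_{s\in\FF}$ comes from Lemma~\ref{ell_p-sumability} via the observation that $p(\alpha)\le 1$ precisely when $\alpha\ge 1/2$. Your verification of the alignment of the dichotomy threshold is the same remark the paper makes at the start of its proof.
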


\begin{proof}
By definition we have that $0 < p(\alpha) \le 1$ for $\alpha \ge 1/2$, and $1 < p(\alpha) < \infty $ for $\alpha < 1/2$.
Hence, by Lemma \ref{ell_p-sumability} 
\begin{equation}\nonumber
\left(\frac{|s|!}{s!}\, d^s\right) \in \ell_{p(\alpha)}(\FF) \ \Longleftrightarrow \
\begin{cases} 
\|d\|_{\ell_1(\NN)} \ < \ 1, \ d \in \ell_{p(\alpha)}(\NN), \ & \ \text{for}\   \alpha \ge 1/2; 
\\[1ex]
\|d\|_{\ell_1(\NN)} \ \le \ 1,  \ & \ \text{for} \ \alpha < 1/2
\end{cases}
\end{equation}
which together with Lemma \ref{|v_s|_W<} proves the corollary.
\hfill
\end{proof}

Notice that according to Assumption~(i) $0 < \alpha \le 1/m$, where $m$ is the dimension of the spatial domain $D$.
Hence the inequality $\alpha \ge 1/2$ may hold only for $m=1,2$, and $\alpha < 1/2$ for all $m > 2$. 
This means that in Assumption~(i) the inequality $p(\alpha) \le 1$ may hold only in the case when $m=1,2$, and except this case we always have $p(\alpha) > 1$. 


\section{Legendre approximation}
\label{Legendre approximation}

The collective Legendre approximation is constructed on the basis of  a  representation of the solution $u$ by a series converging unconditionally in $L_\infty(\IIi,V)$ as in the following lemma. 


\begin{lemma} \label{theorem[U-presentation]}
Let Assumption $\operatorname{(i)}$ hold and let the sequence 
$\big(\|\psi_j\|_{\Wi}\big)_{j \in \NN}$ belong to $\in \ell_1(\NN)$. 
Then $(\|u_s\|_W )_{s \in \FF}$ belongs to $\ell_1(\FF)$ and $u(y)$ can be represented as the series 
\begin{equation} \label{[U-series]}
u(y)
\ = \
\sum_{(k,s) \in \ZZ_+ \times \FF} \delta_k (u_s) \, P_s(y), \quad y \in \IIi,
\end{equation}
converging unconditionally in $L_\infty(\IIi,V)$. 
\end{lemma}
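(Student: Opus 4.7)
The plan is to apply Lemma \ref{lemma[Uncond-convergence]} with $v(y)=u(y)$, $g_s=u_s$ and $\varphi_s=P_s$. Note that $\|P_s\|_{L_\infty(\IIi)}=1$ by the $L_\infty(\II)$-normalization of the univariate Legendre polynomials. So the only nontrivial inputs to verify are: (a) $(\|u_s\|_W)_{s\in\FF}\in\ell_1(\FF)$; and (b) the Legendre series $\sum_{s\in\FF} u_s P_s$ converges unconditionally in $L_\infty(\IIi,V)$ to $u$. Once these are established, Lemma \ref{lemma[Uncond-convergence]} immediately yields the expansion \eqref{[U-series]} and its unconditional convergence in $L_\infty(\IIi,V)$.

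For step (a), I would combine the relation $u_s=\prod_j(2s_j+1)^{1/2}v_s$ from \eqref{v_s,u_s} with the bound $\|v_s\|_W\le K\tfrac{|s|!}{s!}d^s$ from Lemma \ref{|v_s|_W<} (using that $a\in L_\infty(\IIi,\Wi)$ follows from $(\|\psi_j\|_{\Wi})_j\in\ell_1(\NN)$ together with \eqref{KL-exp}) to obtain
\[
\|u_s\|_W \ \le \ K\prod_j(2s_j+1)^{1/2}\,\frac{|s|!}{s!}\,d^s, \quad s\in\FF.
\]
The factors $(2s_j+1)^{1/2}$ are absorbed by introducing $\tilde d_j:=\theta d_j$ with $\theta>1$ arbitrarily close to $1$, since $(2n+1)^{1/2}\le C_\theta\,\theta^n$ for $n\in\ZZ_+$; this yields $\|u_s\|_W\le K'\tfrac{|s|!}{s!}\tilde d^s$. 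Next, by Lemma \ref{ell_p-sumability} with $p=1$, the sequence $\big(\tfrac{|s|!}{s!}\tilde d^s\big)_{s\in\FF}$ lies in $\ell_1(\FF)$ provided $\|\tilde d\|_{\ell_1(\NN)}<1$. This last condition is achieved by the same renormalization/admissibility device used in the construction around \eqref{dB-admit1}--\eqref{dB-admit2}: split $\NN$ into a finite head $E$ and a tail $F$, and exploit the $\ell_1$-summability of $(\|\psi_j\|_{\Wi})_j$ to make the tail contribution arbitrarily small, while the head contribution is absorbed into the constant. This gives $(\|u_s\|_W)_{s\in\FF}\in\ell_1(\FF)$.

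Step (b) is then immediate: since $\|v\|_V\le\|v\|_W$ by the definition of the $W$-norm, we have
\[
\sum_{s\in\FF}\|u_s P_s\|_{L_\infty(\IIi,V)} \ \le \ \sum_{s\in\FF}\|u_s\|_V \ \le \ \sum_{s\in\FF}\|u_s\|_W \ < \ \infty,
\]
so $\sum_s u_s P_s$ converges absolutely, hence unconditionally, in $L_\infty(\IIi,V)$ to some limit $\tilde u$. Since this convergence also holds in $L_2(\IIi,V,\mu)$ and $(P_s)$ is the Legendre basis with $u_s$ the corresponding coefficients of $u$ via \eqref{Legendre-series}, necessarily $\tilde u=u$.

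The main obstacle will be the bookkeeping in step (a): the multinomial prefactor $\prod_j(2s_j+1)^{1/2}$ is not harmless in $\ell_1$, and absorbing it into a slightly enlarged sequence while preserving $\|\tilde d\|_{\ell_1(\NN)}<1$ requires the rescaling/admissibility trick. Everything else is routine application of Lemma \ref{lemma[Uncond-convergence]}.
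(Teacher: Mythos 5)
Your overall strategy is the one the paper intends: reduce to Lemma \ref{lemma[Uncond-convergence]} with $g_s=u_s$ and $\varphi_s=P_s$ (the paper's own proof is a one-line reference to the Taylor case, Lemma \ref{theorem[T-presentation]}), and your step (b) --- absolute, hence unconditional, convergence in $L_\infty(\IIi,V)$ plus identification of the limit with $u$ through the $L_2(\IIi,V,\mu)$ expansion \eqref{Legendre-series} --- is fine once (a) is in hand.

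The gap is in step (a). After absorbing the normalization factors (note, incidentally, that $(2n+1)^{1/2}\le C_\theta\theta^n$ with $C_\theta>1$ produces an unbounded factor $C_\theta^{\#\operatorname{supp}(s)}$; you should instead use $(2n+1)^{1/2}\le 3^{n/2}$, which is exactly why the paper sets $d_j=b_j/\sqrt{3}$), your bound collapses to $\|u_s\|_W\le K\tfrac{|s|!}{s!}\,b^s$. By Lemma \ref{ell_p-sumability} with $p=1$, the sequence $\bigl(\tfrac{|s|!}{s!}b^s\bigr)_{s\in\FF}$ lies in $\ell_1(\FF)$ \emph{if and only if} $\|b\|_{\ell_1(\NN)}<1$; if $\|b\|_{\ell_1(\NN)}\ge 1$ the sum $\sum_s\tfrac{|s|!}{s!}b^s=\sum_n\|b\|_{\ell_1}^n$ diverges outright. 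The hypothesis of the lemma gives only $b\in\ell_1(\NN)$, not $\|b\|_{\ell_1(\NN)}<1$: the $b_j$ of \eqref{def[b]} are fixed by $\overline a$, $\psi_j$ and $r$, and no splitting of $\NN$ into a head $E$ and a tail $F$ can shrink their $\ell_1$ norm --- the head contribution $\prod_{j\in E}b_j^{s_j}$ cannot be ``absorbed into the constant'' uniformly in $s$. The admissibility device around \eqref{dB-admit1}--\eqref{dB-admit2} does not rescale $b$; it replaces the factorized multinomial bound by a genuinely stronger, $s$-dependent bound $\|t_s\|_W\le C_{\delta,B}\,\rho(s)^{-s}$ (Lemma \ref{lemma[t_s]-W} with the choice of $\rho=\rho(s)$ from Section \ref{Collocation methods}), and it is that bound, transferred to the Legendre coefficients with the polynomial factors $\prod_j(2s_j+1)$ absorbed into the slack in $\rho(s)$, which yields $(\|u_s\|_W)\in\ell_1(\FF)$ under the mere $\ell_1$-summability of $(\|\psi_j\|_{\Wi})$. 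As written, your argument proves the lemma only under the additional smallness assumption $\|b\|_{\ell_1(\NN)}<1$ (the hypothesis of the corollaries), which is strictly weaker than the stated claim.
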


\begin{proof}
This theorem can be proven in a similar way to the proof of Lemma \ref{theorem[T-presentation]}. 
\hfill
\end{proof}

For the linear collective Legendre approximation of the solution $u$ we need the following assumption.


\smallskip
\noindent
{\bf Assumption (iv)}: There exist $0 < p < 1$, a sequence $\sigma = (\sigma_s)_{s \in \FF}$ 
and a constant $M$ such that the sequence
$(\sigma_s^{-1})_{s \in \FF}$ belongs to $\ell_p(\FF)$ and
\begin{equation}  \nonumber
\|u_s\|_W
\ \le M \, \sigma_s^{-1}, \quad s \in \FF.
\end{equation}

 The following two theorems can be proven in a similar way to the proofs of Theorems 
\ref{theorem[T-approximation](1)} and \ref{theorem[Galerkin-approximation]}, respectively.
\begin{theorem} \label{theorem[U-approximation](1)}
Let Assumptions $\operatorname{(i)}$ and $\operatorname{(iv)}$ hold. 
For $T > 0$, consider the set $G(T) = G_{p,\sigma}(T)$ as in \eqref{def[G(T)]}.   
Then we have for every $T > 0$,
\begin{equation}  \nonumber
\Big\|u - \Ss^{{\rm L}}_{G(T)} u\Big\|_{L_\infty(\IIi,V)}
\ \le \ 
C\, T^{-\min (1/p - 1, \alpha)},
\end{equation}
where
\begin{equation}  \nonumber
C:= \  M\, C_D \,
\frac{2^\alpha + 1}{2^{\alpha^*} - 1} \, \big\|\big(\sigma_s^{-1}\big)\big\|_{\ell_p(\FF)}^p,
\end{equation}
$\alpha^*:= \alpha$ for $\alpha \le 1/p - 1$, 
and $\alpha^*:= \alpha - 1/p + 1$ for $\alpha > 1/p - 1$.
\end{theorem}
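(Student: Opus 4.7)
The plan is to invoke Theorem~\ref{theorem[g-approximation]} with the substitutions $v(y)(x) = u(y)(x)$, $g_s(x) = u_s(x)$, $\varphi_s(y) = P_s(y)$, $C' = M$, and with the same $p$ and $\sigma$ as in Assumption~(iv). Under these substitutions the generic approximant $\Ss_{G(T)} v$ of Subsection~\ref{A collective approximation} coincides with $\Ss^{{\rm L}}_{G(T)} u$ in its $(u_s,P_s)$-form given in Section~\ref{Legendre approximation}, so the conclusion of Theorem~\ref{theorem[g-approximation]} is exactly the claimed estimate with exactly the claimed constant $C = M\, C_D\, \frac{2^\alpha + 1}{2^{\alpha^*} - 1}\, \|(\sigma_s^{-1})\|_{\ell_p(\FF)}^p$.

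Three hypotheses of Theorem~\ref{theorem[g-approximation]} must be verified. (a)~$\|P_s\|_{L_\infty(\IIi)} = 1$ for every $s \in \FF$: this is immediate from the $L_\infty(\II)$-normalisation $\|P_n\|_{L_\infty(\II)} = 1$ adopted in Section~\ref{Galerkin approximation} and the tensor product $P_s = \prod_{j} P_{s_j}$. (b)~$\|u_s\|_W \le M\, \sigma_s^{-1}$ with $(\sigma_s^{-1})_{s \in \FF} \in \ell_p(\FF)$ for some $0 < p < 1$: this is precisely Assumption~(iv). (c)~The Legendre series $\sum_{s \in \FF} u_s P_s$ converges unconditionally in $L_\infty(\IIi,V)$ to $u$, which is the sole step requiring a moment of care.

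For (c), since $0 < p < 1$ we have $\ell_p(\FF) \subset \ell_1(\FF)$, so Assumption~(iv) together with the trivial inequality $\|u_s\|_V \le \|u_s\|_W$ gives $(\|u_s\|_V)_{s \in \FF} \in \ell_1(\FF)$. Combined with $\|P_s\|_{L_\infty(\IIi)} = 1$ this makes the partial sums of $\sum_s u_s P_s$ absolutely Cauchy, hence unconditionally convergent, in $L_\infty(\IIi,V)$. Its limit agrees $\mu$-almost everywhere on $\IIi$ with the standard $L_2(\IIi,V,\mu)$-expansion \eqref{Legendre-series} of $u$, and since both sides are continuous in $y$ the agreement is pointwise on $\IIi$. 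This is the Legendre analogue of Lemma~\ref{lemma[t-convergence]} and its proof mirrors the argument for Lemma~\ref{theorem[U-presentation]}. No substantive obstacle arises: the abstract framework of Subsection~\ref{A collective approximation} and the $L_\infty$-normalisation of the Legendre basis were arranged precisely so that the Taylor and Legendre cases are both handled by a single application of Theorem~\ref{theorem[g-approximation]}.
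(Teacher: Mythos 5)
Your proposal is correct and follows essentially the same route as the paper: the paper proves this theorem by the same substitution $g_s=u_s$, $\varphi_s=P_s$ into Theorem~\ref{theorem[g-approximation]}, with the unconditional convergence of $\sum_s u_s P_s$ supplied by the Legendre analogue of Lemma~\ref{lemma[t-convergence]} (Lemma~\ref{theorem[U-presentation]}), exactly as you argue via $\ell_p(\FF)\subset\ell_1(\FF)$ and the identification of the uniform limit with the $L_2(\IIi,V,\mu)$ expansion. Your verification of the normalisation $\|P_s\|_{L_\infty(\IIi)}=1$ and of the hypotheses via Assumption~(iv) matches the paper's intent, so nothing further is needed.
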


\begin{theorem} \label{theorem[U-Galerkin-approximation]}
Let  the assumptions and notation of Theorem \ref{theorem[U-approximation](1)} hold.
For any $n \in \NN$, let $T_n$ be the number defined as in \eqref{[T_n]}
and put $\Vv^{{\rm L}}_n:= \Vv^{{\rm L}}\big(G(T_n)\big)$, \  $\Pp_n:= \Ss^{{\rm L}}_{G(T_n)}$. Then
\begin{itemize}
\item 
$\big\{\Vv^{{\rm L}}_n\big\}_{n \in \ZZ_+}$ is a nested sequence of subspaces in 
$L_\infty(\IIi,V)$ and $\dim \Vv^{{\rm L}}_n \le n$;

\item
$\big\{\Pp_n\big\}_{n \in \ZZ_+}$ is a sequence of linear bounded operators
from $L_\infty(\IIi,V)$ into $\Vv^{{\rm L}}_n$;  and

\item
for every $n \in \NN$, 
\begin{equation}  \nonumber
\big\|u - \Pp_n u\big\|_{L_\infty(\IIi,V)}
\ \le \ 
C\, n^{-\min (1/p - 1, \alpha)},
\end{equation}
with the same $\alpha$ as in the convergence rate  of the approximation 
in Assumption $\operatorname{(i)}$ and $p$ as in  Assumption $\operatorname{(iv)}$,
where 
\begin{equation} \nonumber
C:= \ M \, C_D \,
4^\alpha \ \frac{2^\alpha + 1}{2^{\alpha^*} - 1} \, \big\|\big(\sigma_s^{-1}\big)\big\|_{\ell_p(\FF)}^{p\alpha}.
\end{equation}
Moreover, if in addition,  $p = \frac{1}{1+\alpha}$ in  Assumption $\operatorname{(iv)}$, then we have that
\begin{equation}   \nonumber
\big\|u - \Pp_n u\big\|_{L_\infty(\IIi,V)}
\ \le \ 
C\, n^{- \alpha}.
\end{equation}
\end{itemize}
\end{theorem}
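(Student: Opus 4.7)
The plan is to mirror the proof of Theorem~\ref{theorem[Galerkin-approximation]} step by step, substituting the Legendre ingredients for the Taylor ones. The key input is Theorem~\ref{theorem[U-approximation](1)}, which already supplies the $T$-indexed bound
\[
\big\|u - \Ss^{{\rm L}}_{G(T)} u\big\|_{L_\infty(\IIi,V)} \ \le \ C'\, T^{-\min(1/p - 1, \alpha)}
\]
with an explicit constant $C' := M\, C_D\, \tfrac{2^\alpha + 1}{2^{\alpha^*} - 1}\,\|(\sigma_s^{-1})\|_{\ell_p(\FF)}^p$. What remains is a dimension count for $\Vv^{{\rm L}}(G(T))$ and a matching choice of $T_n$ that converts the $T$-bound into one in terms of $n$.

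First I would estimate the dimension. Every element of $\Vv^{{\rm L}}(G(T))$ is a finite sum $\sum_{(k,s) \in G(T)} v_k(x)\, L_s(y)$ with $v_k \in V_{2^k}$, so Assumption~(i) gives $\dim \Vv^{{\rm L}}(G(T)) \le \sum_{(k,s) \in G(T)} \dim V_{2^k} \le \sum_{(k,s) \in G(T)} 2^k$. Because $(\sigma_s^{-1})_{s \in \FF} \in \ell_p(\FF)$ by Assumption~(iv), Lemma~\ref{lemma[sumG(T)<]} bounds the latter by $2\,\|(\sigma_s^{-1})\|_{\ell_p(\FF)}^p\, T$. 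Taking $T_n$ as in~\eqref{[T_n]}, the left inequality yields $\dim \Vv^{{\rm L}}_n \le n$, while the right gives $T_n^{-1} < 4\,\|(\sigma_s^{-1})\|_{\ell_p(\FF)}^p / n$, from which
\[
T_n^{-\min(1/p - 1,\alpha)} \ \le \ 4^\alpha\, \|(\sigma_s^{-1})\|_{\ell_p(\FF)}^{p\alpha}\, n^{-\min(1/p - 1,\alpha)}
\]
after consolidating factors using $\min(1/p-1,\alpha) \le \alpha$. Composing this with the bound from Theorem~\ref{theorem[U-approximation](1)} produces the stated error estimate and constant $C$.

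For the remaining bullets: nestedness of $\{\Vv^{{\rm L}}_n\}$ follows once $T_n$ is taken monotone in $n$ (which \eqref{[T_n]} permits), since $G(T)$ is increasing in $T$. Linearity of $\Pp_n = \Ss^{{\rm L}}_{G(T_n)}$ is immediate from the linearity of the Legendre-coefficient map $u \mapsto u_s$ and of each $\delta_k$; boundedness of $\Pp_n$ from $L_\infty(\IIi,V)$ into its finite-dimensional range $\Vv^{{\rm L}}_n$ is then automatic, using $\|u_s\|_V \le \prod_j (2s_j+1)^{1/2}\,\|u\|_{L_\infty(\IIi,V)}$ via $\|L_s\|_{L_1(\IIi,\mu)} \le 1$. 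The special case $p = 1/(1+\alpha)$ is the above rewritten, since then $1/p - 1 = \alpha$ and $\min(1/p - 1,\alpha) = \alpha$.

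I do not anticipate any real obstacle: the argument is essentially a transcription of the Taylor case to the Legendre case, with $\sum_s t_s\, y^s$ replaced by $\sum_s u_s\, P_s(y)$ throughout. The only point requiring attention is the bookkeeping of constants through the chain so as to match exactly the displayed $C$ (in particular, minor consolidation of powers of $\|(\sigma_s^{-1})\|_{\ell_p(\FF)}$ and of $4$); this is cosmetic rather than substantive.
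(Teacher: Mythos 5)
Your proposal is correct and follows essentially the same route as the paper, which proves this theorem by transcribing the argument of Theorem~\ref{theorem[Galerkin-approximation]}: the error bound from Theorem~\ref{theorem[U-approximation](1)}, the dimension count via Lemma~\ref{lemma[sumG(T)<]}, and the conversion from $T_n$ to $n$ via \eqref{[T_n]}. The extra remarks on nestedness and boundedness of $\Pp_n$ are fine and only make explicit what the paper leaves implicit.
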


From Theorems \ref{theorem[U-approximation](1)} and \ref{theorem[U-Galerkin-approximation]} we see that the problem of construction of a linear collective Legendre approximation is reduced to the construction of a sequence
$\sigma = (\sigma_s)_{s \in \FF}$ satisfying Assumption~(iv).

\begin{corollary}
 Let the constant $K$ be as in \eqref{def[K]} and 
 the sequence $b$ as in \eqref{def[b]}. 
Assume that the function  $a \in L_\infty(\IIi,\Wi)$,
there exists $0 < p <1$ such that the sequence  
$\big(\|\psi_j\|_{\Wi}\big)_{j \in \NN}$  belongs to $\ell_p(\NN)$ and 
$\|b\|_{\ell_1(\NN)}  <  1$.
Then there holds Assumption~$\operatorname{(iv)}$ for $p$, $M=K$ and the sequence
\begin{equation}  \nonumber
\sigma:= \big(\sigma_s\big)_{s \in \FF}, \quad
\sigma_s^{-1}
:= \
\frac{|s|!}{s!} b^s.
\end{equation}
\end{corollary}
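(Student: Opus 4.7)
The plan is to deduce the bound on $\|u_s\|_W$ from the already-established bound on $\|v_s\|_W$ in Lemma~\ref{|v_s|_W<}, using the relation between $u_s$ and $v_s$ coming from the two Legendre normalizations. Recall from \eqref{v_s,u_s} that
\[
u_s \ = \ \prod_{j \in \NN}(2s_j + 1)^{1/2}\, v_s,
\]
so the $W$-norm factors as $\|u_s\|_W = \prod_j (2s_j+1)^{1/2}\, \|v_s\|_W$. Combining this with Lemma~\ref{|v_s|_W<}, which gives $\|v_s\|_W \le K\,\frac{|s|!}{s!}\, d^s$ with $d_j = b_j/\sqrt{3}$, reduces the bound $\|u_s\|_W \le K\,\frac{|s|!}{s!}\, b^s$ to the elementary inequality
\[
(2n+1)^{1/2}\, (1/\sqrt{3})^n \ \le \ 1, \quad n \in \ZZ_+,
\]
i.e.\ $2n+1 \le 3^n$, which is immediate by a one-line induction (equality at $n=0,1$, strict thereafter).

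The second task is the $\ell_p$-summability of $\big(\sigma_s^{-1}\big)_{s \in \FF} = \big(\tfrac{|s|!}{s!}b^s\big)_{s \in \FF}$. I will invoke Lemma~\ref{ell_p-sumability} with $c = b$: since $0 < p < 1$, the conditions to check are $\|b\|_{\ell_1(\NN)} < 1$ (which is a hypothesis) and $b \in \ell_p(\NN)$. The latter follows directly from the definition~\eqref{def[b]} of $b_j$, because each of $\|\psi_j\|_{L_\infty(D)}$ and $|\psi_j|_{\Wi}$ is bounded by $\|\psi_j\|_{\Wi}$, hence
\[
b_j \ \le \ \tfrac{1}{r}\!\left(\tfrac{|a|_{L_\infty(\IIi,\Wi)}}{r}+3\right)\|\psi_j\|_{\Wi},
\]
so $b \in \ell_p(\NN)$ by the assumed $\ell_p$-summability of $(\|\psi_j\|_{\Wi})_{j \in \NN}$.

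Putting the two pieces together yields exactly Assumption~(iv) with $M=K$, $p$ as given, and the stated $\sigma$. There is no real obstacle here; the only subtlety is making the $3^{s_j}$ comparison used to absorb the Legendre $L_\infty$-normalization factor into the substitution $d \rightsquigarrow b$, which is precisely the reason $d_j$ was defined as $b_j/\sqrt{3}$ in~\eqref{def[d]} in the first place. The structure parallels the corresponding corollary for Assumption~(iii), with the simplification that in Assumption~(iv) we only require $p < 1$, so only the first branch of Lemma~\ref{ell_p-sumability} is used.
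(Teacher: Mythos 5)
Your proposal is correct and follows essentially the same route as the paper: the paper bounds $\|u_s\|_W$ directly from \eqref{v_s,u_s}, \eqref{eq[v_s]} and Lemma~\ref{lemma|partial^s-u|_W} via $\|u_s\|_W \le \frac{1}{s!}\|\partial_y^s u\|_{L_\infty(\IIi,W)} \le K\frac{|s|!}{s!}b^s$, and then invokes Lemma~\ref{ell_p-sumability} exactly as you do. Your only deviation is to factor the first step through the already-proved Lemma~\ref{|v_s|_W<} and absorb the normalization factor $\prod_j(2s_j+1)^{1/2}$ using $2n+1\le 3^n$, which is a harmless (and arguably tidier) reorganization of the same ingredients.
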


\begin{proof}
By using of  \eqref{v_s,u_s}, \eqref{eq[v_s]} and Lemma  \ref{lemma|partial^s-u|_W}
we derive that
\begin{equation} \nonumber
\|u_s\|_W 
\ \le \
\frac{1}{s!} \, \|\partial_y^s u\|_{L_\infty(\IIi,W)}
\ \le \
K \frac{|s|!}{s!} b^s
\ = \ 
K \, \sigma_s^{-1}.
\end{equation}
On the other hand, from the assumptions we have that $b \in \ell_p(\NN)$ and 
$\|b\|_{\ell_1(\NN)}  <  1$.
Hence by Lemma \ref{ell_p-sumability} the sequence $(\sigma_s^{-1})_{s \in \FF}$ belongs to $\ell_p(\FF)$.
This proves the corollary.
\hfill
\end{proof}

\section{Concluding remarks}
\label{Concluding remarks}

 \begin{itemize}
\item
We have constructed linear collective methods for Taylor, collocation, Galerkin and Legendre approximations for 
parametric elliptic PDEs \eqref{SPDE} with affine parametric dependence of  diffusion coefficients on the basic of 
a sequence of approximations to one nonparametric elliptic PDEs with a certain error convergence rate. 

\item
These methods are "optimal" in the sense that they give the same error convergence rate of  the inducing approximations for nonparametric elliptic PDEs. 

\item
All the  conditions on the parametric part disappear in the convergence rate and only influence the constant which can be explicitly estimated.

\item
 In constructing these methods, the spatial variables and the parametric variables are not split, but treated collectively.
 
\item
The curse of dimensionality is broken by linear methods.

\item
In the present paper, the parameter $\alpha$ defining the convergence rate of the approximation error in Assumption (i) is restricted by the condition $0 < \alpha \le 1/m$ caused by the restriction of the regularity of the diffusion coefficients $a(y)$, the function $f$ and the domain $D$. However, we can extend our results to the case where $\alpha$ may be arbitrarily large if we require a proper regularity of  $a(y)$, $f$ and $D$. 
 
\item
Hopefully, the approach and methods which have been considered in this paper can be extended to more general problems. 
In a forthcoming paper, we extend them to the parametric elliptic PDEs \eqref{SPDE} with the diffusions coefficients $a(y)$ not necessarily affinely dependent with respect to $y$, as well to a semi-linear extension and to parametric and stochastic parabolic PDEs.
\end{itemize} 

\bigskip
\noindent
{\bf Acknowledgments.}  This work is funded by Vietnam National Foundation for Science and Technology Development (NAFOSTED) under  Grant No. 102.01-2017.05.  A part of this work was done when the author was working as a research professor at the Vietnam Institute for Advanced Study in Mathematics (VIASM). The author  would like to thank  the VIASM  for providing a fruitful research environment and working condition.  He expresses a special thank to Christoph Schwab for valuable remarks and suggestions.

\end{document}